\DeclareMathOperator{\sm}{sm}
\DeclareMathOperator{\coker}{coker}
\DeclareMathOperator{\dual}{\vee}
\DeclareMathOperator{\Hilb}{Hilb}
\DeclareMathOperator{\supp}{supp}
\DeclareMathOperator{\Bun}{Bun}
\DeclareMathOperator{\Qq}{\mathcal{Q}}
\DeclareMathOperator{\Qb}{\bar{\Qq}}
\DeclareMathOperator{\Pp}{\mathcal{P}}
\DeclareMathOperator{\Ll}{\mathcal{L}}
\DeclareMathOperator{\Tt}{\mathcal{T}}
\DeclareMathOperator{\Pb}{\bar{\mathcal{P}}}
\DeclareMathOperator{\id}{id}
\DeclareMathOperator{\OmegaX}{\Omega^1_X}
\DeclareMathOperator{\res}{res}
\DeclareMathOperator{\rk}{rk}
\DeclareMathOperator{\F}{\mathcal{F}}
\DeclareMathOperator{\Higgs}{\mathcal{M}_{Dol}}
\DeclareMathOperator{\adHiggs}{\mathcal{M}^{ad}_{Dol}}
\DeclareMathOperator{\ssLoc}{\mathcal{M}_{dR}^{s}}
\DeclareMathOperator{\ssHiggs}{\mathcal{M}_{Dol}^{s}}
\DeclareMathOperator{\G}{\mathbb{G}}
\DeclareMathOperator{\GL}{GL}
\DeclareMathOperator{\A}{\mathcal{A}}
\DeclareMathOperator{\M}{\mathcal{M}}
\DeclareMathOperator{\W}{\mathcal{W}}
\DeclareMathOperator{\X}{\mathcal{X}}
\DeclareMathOperator{\Y}{\mathcal{Y}}
\DeclareMathOperator{\Z}{\mathcal{Z}}
\DeclareMathOperator{\C}{\mathcal{C}}
\DeclareMathOperator{\Ee}{\mathcal{E}}
\DeclareMathOperator{\Gg}{\mathcal{G}}
\DeclareMathOperator{\Spec}{Spec}
\DeclareMathOperator{\Loc}{\mathcal{M}_{dR}}
\DeclareMathOperator{\HdR}{\chi_{dR}}
\DeclareMathOperator{\HDol}{\chi_{Dol}}
\DeclareMathOperator{\Sym}{Sym}
\DeclareMathOperator{\Oo}{\mathcal{O}}
\begin{document}

\newtheorem{defi}{Definition}[section]
\newtheorem{thm}[defi]{Theorem}
\newtheorem{prop}[defi]{Proposition}
\newtheorem{cor}[defi]{Corollary} 
\newtheorem{lemma}[defi]{Lemma}
\newtheorem{rmk}[defi]{Remark}
\newtheorem{cl}[defi]{Claim}
\newtheorem{ex}[defi]{Example}
\newtheorem{ass}[defi]{Assumption}

\title{Hilbert schemes as moduli of Higgs bundles and local systems}
\date{}
\author{Michael Groechenig}
\maketitle
\abstract{We construct five families of two-dimensional moduli spaces of parabolic Higgs bundles (respectively local systems) by taking the equivariant Hilbert scheme of a certain finite group acting on the cotangent bundle of an elliptic curve. We show that the Hilbert scheme of $m$ points of these surfaces is again a moduli space of parabolic Higgs bundles (respectively local systems), confirming a conjecture of Boalch in these cases and extending a result of Gorsky--Nekrasov--Rubtsov. Using the McKay correspondence we establish the autoduality conjecture for the derived categories of the moduli spaces of Higgs bundles under consideration.}
\newpage

\tableofcontents

\section{Introduction}

Let $X$ be a smooth projective curve defined over an algebraically closed field $k$. A Higgs bundle on $X$ is a pair consisting of a vector bundle $E$ on $X$ and a Higgs field $\theta$ given by a morphism of locally free sheaves $$\theta: E \to E \otimes \Omega_X^1,$$ where $\Omega_X^1$ denotes the sheaf of $1$-forms on $X$. The study of Higgs bundles on curves has been initiated by Hitchin (\cite{MR887284}) in relation with the construction of solutions to a dimensional reduction of the Yang-Mills equation. An essential device to study Higgs bundles has been the Hitchin map, which associates to a pair $(E,\theta)$ the characteristic polynomial of the Higgs field $\theta$. The zero scheme cut out by the characteristic polynomial in $T^*X$ is a possibly highly singular curve, referred to as the spectral curve. Passing to moduli spaces and fixing the rank $n$ of the vector bundle $E$ reveals a beautiful geometry. The moduli space of Higgs bundles $\M$ fibres over an affine space $\A$, giving rise to an integrable system (\cite{MR885778}) $$\chi: \M \to \A.$$ The fibres of the Hitchin map can be understood as compactified Picard varieties of the spectral curves. While this implies that the generic Hitchin fibre is an abelian variety, the global geometry of the Hitchin fibration remains mysterious.

Here we describe five infinite families of examples of moduli spaces of parabolic Higgs bundles (see Definition \ref{defi:parabolicHiggs}), whose geometry is non-trivial\footnote{in the sense that the Hitchin map is not everywhere smooth} but remains nonetheless manageable. Every family is constructed as an application of the following theorem, which will be proved as Theorem \ref{thm:Hilbert}.

\begin{thm}\label{thm:main}
Let $\M$ be the smooth moduli space of stable parabolic Higgs bundles of dimension two, with stability conditions, degree and dimension vector as specified in Theorem \ref{thm:toyexamples}, defined over an algebraically closed field $k$ of vanishing or sufficiently high characteristic. For every positive integer $m$ the Hilbert scheme of $m$ points $\M^{[m]}$ is again a moduli space of parabolic Higgs bundles, and the Hitchin morphism factors through the Hilbert-Chow map $\M^{[m]} \to \M^{(m)}$ to the symmetric product.
\end{thm}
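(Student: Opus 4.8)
The plan is to exploit the explicit description of the surface $\M$ provided by Theorem~\ref{thm:toyexamples}: each of the five surfaces is realized as the $\Gamma$-equivariant Hilbert scheme $\Hilb^{\Gamma}(T^*E)$ attached to a finite group $\Gamma$ acting symplectically on the cotangent bundle of an elliptic curve $E$, and this $\Gamma$-Hilbert scheme is precisely the minimal (crepant) resolution of the quotient surface $T^*E/\Gamma$. The first step is to isolate, as an input, the chain of equivalences underlying Theorem~\ref{thm:toyexamples}: the orbifold (Biswas-type) correspondence identifies stable parabolic Higgs bundles on $\mathbb{P}^1 = E/\Gamma$, with the prescribed parabolic points supported on the image of the $\Gamma$-fixed locus, with $\Gamma$-equivariant Higgs bundles on $E$; and the ordinary spectral correspondence on $E$ identifies the latter, $\Gamma$-equivariantly, with $\Gamma$-equivariant spectral sheaves on $T^*E$. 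In the rank-one case these equivariant sheaves are exactly the $\Gamma$-clusters parametrized by $\M = \Hilb^{\Gamma}(T^*E)$.

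The heart of the argument is a geometric identification of $\M^{[m]}$ as a single equivariant Hilbert scheme on a higher-dimensional space. Since $\M$ is the crepant resolution of $T^*E/\Gamma$, the Hilbert--Chow morphism exhibits $\M^{[m]} \to \M^{(m)} = \Sym^m(\M)$ as a symplectic (hence crepant) resolution; composing with the $m$-th symmetric power of the resolution $\M \to T^*E/\Gamma$ makes $\M^{[m]}$ a crepant resolution of $\Sym^m(T^*E/\Gamma)$. Now
\[
\Sym^m\!\big(T^*E/\Gamma\big) \;=\; (T^*E)^m \big/ (\Gamma \wr S_m),
\qquad
\Gamma \wr S_m = \Gamma^m \rtimes S_m ,
\]
and I would invoke the (derived) McKay correspondence for the symplectic quotient singularity $(T^*E)^m/(\Gamma \wr S_m)$ to identify this crepant resolution with the wreath-product Hilbert scheme, obtaining $\M^{[m]} \cong \Hilb^{\Gamma \wr S_m}\big((T^*E)^m\big)$. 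This is the decisive step: it converts the $m$-point Hilbert scheme of the surface into one equivariant Hilbert scheme, on which a spectral moduli interpretation can be mounted directly.

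With this identification in hand, the final step is to run the spectral correspondence at higher rank. The $\Gamma \wr S_m$-equivariant cluster data on $(T^*E)^m$ translates, exactly as in the two-dimensional case, into $\Gamma$-equivariant spectral sheaves on $T^*E$ of total length $m$, the factor $S_m$ accounting for the indistinguishability of the $m$ sheets; running the orbifold and spectral equivalences backwards then produces stable parabolic Higgs bundles on $\mathbb{P}^1$ whose rank is $m$ times that of the bundles parametrized by $\M$, with parabolic weights and degree determined functorially from those fixed in Theorem~\ref{thm:toyexamples}. The Hitchin map of this higher-rank moduli space sends a Higgs bundle to its characteristic polynomial, i.e.\ to the support cycle of the associated spectral sheaf; under the above identifications this support cycle is exactly the image of a length-$m$ subscheme under Hilbert--Chow, which yields the asserted factorization $\chi\colon \M^{[m]} \to \M^{(m)} \to \A$.

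The main obstacle is the last step: making the higher-rank spectral correspondence precise while controlling stability. One must verify that the stability condition, degree, and dimension (parabolic weight) vectors of Theorem~\ref{thm:toyexamples} propagate to a genuine stability condition at the larger rank for which the resulting moduli space is smooth and agrees with $\Hilb^{\Gamma \wr S_m}\big((T^*E)^m\big)$ — in particular that generic spectral sheaves remain stable and that the relevant coprimality, hence the absence of strictly semistable points, is preserved under the $m$-fold operation. A secondary technical point is the characteristic hypothesis: both the McKay correspondence for the symplectic quotient and the spectral correspondence in positive characteristic require $\ch k = 0$ or $\ch k$ exceeding a bound depending on $m$ and $|\Gamma|$, and one must track this bound through each stage of the construction.
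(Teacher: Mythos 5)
Your ``decisive step'' is a genuine gap, and it is where the argument breaks. From the (correct) observation that $\M^{[m]}$ is a crepant resolution of $\Sym^m(T^*E/\Gamma) = (T^*E)^m/(\Gamma \wr S_m)$ you cannot conclude $\M^{[m]} \cong \Hilb^{\Gamma \wr S_m}\bigl((T^*E)^m\bigr)$, and the derived McKay correspondence cannot be used to produce such an identification: a McKay-type statement is an equivalence of derived categories, and a derived equivalence does not yield an isomorphism of varieties. What the geometry gives you is only that both spaces are crepant resolutions of the same symplectic quotient singularity, and in dimension $2m \geq 4$ such resolutions are far from unique. Indeed, in the local model ($\Gamma \subset \mathrm{SL}_2$ acting on $\mathbb{C}^2$, hence $\Gamma \wr S_m$ acting on $\mathbb{C}^{2m}$), both the Hilbert scheme of $m$ points on the minimal resolution and the $\Gamma\wr S_m$-equivariant Hilbert scheme are Nakajima quiver varieties, but for stability parameters in \emph{different} GIT chambers; they are related by nontrivial flops and are in general not isomorphic over the quotient. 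Since $(T^*E)^m/(\Gamma\wr S_m)$ has points locally modelled on $\mathbb{C}^{2m}/(\Gamma\wr S_m)$, the same obstruction appears globally, so the identification you need is not merely unjustified but expected to fail for $m \geq 2$ and $\Gamma \neq 1$. Note also that Theorem \ref{thm:BKR} could not even be invoked to get the \emph{derived} statement for the wreath product: its hypothesis $\dim Y \times_{X/\Gamma} Y \leq \dim Y + 1$ is unavailable for $\Gamma \wr S_m$ acting on $(T^*E)^m$; the only high-dimensional case where this is under control is Haiman's theorem for $S_m$ acting on the $m$-th power of a surface, which is precisely what does not cover wreath products.

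The paper's proof never leaves the surface, which is how it sidesteps all of this. It uses Theorem \ref{thm:BKR} only for $\Gamma$ acting on $T^*E$, composed with equivariant Mukai duality, to get an equivalence $D^b_{coh}(\M) \cong D^b_{coh}([T^*E/\Gamma])$ \emph{relative to the Hitchin base} $\A_1$. A point of $\M^{[m]}$ is a length-$m$ torsion sheaf $\F$ on $\M$ together with a surjection $\Oo_{\M} \twoheadrightarrow \F$; as in Lemma \ref{lemma:admissible} the transform of $\F$ is an admissible Higgs bundle on the orbifold $[E/\Gamma]$, the surjection becomes a distinguished vector $v$ in a fibre, and the key claim is that surjectivity of $\Oo_{\M} \to \F$ is equivalent to the absence of degree-zero Higgs subbundles containing $v$, which is exactly stability for the weights of Theorem \ref{thm:toyexamples} extended to the quiver $\widetilde{Q}'$. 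A properness/equal-dimension/connectedness argument then shows the resulting morphism $\M^{[m]} \to \ssHiggs(\widetilde{Q}',(m\lambda)')$ of $\A_m$-spaces is an isomorphism, and the factorization of the Hitchin map through Hilbert--Chow is automatic because the whole construction is relative to $\A_1$. A secondary point: even granting your wreath-product identification, your translation of clusters into Higgs bundles via ``the spectral correspondence'' conflates two different mechanisms --- clusters are zero-dimensional sheaves, whereas BNR spectral sheaves of Higgs bundles on a curve are one-dimensional; the actual dictionary is the Fourier--Mukai transform along the Poincar\'e bundle of $E \times E^{\dual}$ (Lemma \ref{lemma:admissible}), so the remaining steps of your plan would in any case require the full Fourier--Mukai analysis and the stability argument that you defer.
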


Boalch conjectured in \cite[Rmk. 11.3]{Boalch:2011fk} that the Hilbert scheme of $m$ points of a two-dimensional moduli space of meromorphic Higgs bundles is again a moduli space of meromorphic Higgs bundles. It has been shown by Gorsky--Nekrasov--Rubtsov that Hilbert schemes of cotangent spaces of elliptic curves are moduli spaces of parabolic Higgs bundles (\cite[Sect. 5.1]{MR1859601}). The above Theorem \ref{thm:main} thus extends their result to other examples of two-dimensional moduli spaces of parabolic Higgs bundles.

The two-dimensional examples of moduli spaces of parabolic Higgs bundles, which we will consider, are in correspondence with the affine Dynkin diagrams $\widetilde{A}_0$, $\widetilde{D}_4$, $\widetilde{E}_6$, $\widetilde{E}_7$, and $\widetilde{E}_8$ and are constructed as certain resolutions of singularities of quotients of cotangent bundles of elliptic curves.

In the simplest case of $\widetilde{A}_0$ the corresponding moduli space in this list is the cotangent bundle of an elliptic curve $E$. For $\widetilde{D}_4$ one considers the natural action of the group $\Gamma = \mathbb{Z}/2\mathbb{Z}$ on $T^*E$ given by the inverse morphism $x \mapsto -x$ of $E$. The two-dimensional moduli spaces of this type are certain crepant resolutions of the GIT quotients $$T^*E/\Gamma,$$ given by the $\Gamma$-Hilbert schemes of $T^*E$.

In the $\widetilde{E}_7$-case we pick an elliptic curve $E$ with an automorphism of order $4$. This is an elliptic curve with a special form of complex multiplication, which corresponds to the lattice of Gaussian integers $\mathbb{Z}[i] \subset \mathbb{C}$. According to \cite[p. 483]{MR1312368} this elliptic curve corresponds to the equation 
$$y^2 = x^3 + x,$$
and the $\mathbb{Z}/4\mathbb{Z}$-action is generated by $(x,y) \mapsto (-x,iy)$. Again we construct a two-dimensional moduli space of parabolic Higgs bundles by taking the $\Gamma$-Hilbert scheme of $T^*E$.

For the graphs $\widetilde{E}_6$ and $\widetilde{E}_8$ we proceed analogously with an elliptic curve $E$ with a non-trivial $\mathbb{Z}/3\mathbb{Z}$- and $\mathbb{Z}/6\mathbb{Z}$-action respectively. Over the field of complex numbers such a curve is given by the lattice of Eisenstein integers $\mathbb{Z}[\omega] \subset \mathbb{C}$, where $\omega$ is a primitive third root of unity. According to \cite[p. 483]{MR1312368} an explicit equation for this curve is given by $$y^2 + y = x^3.$$ The $\mathbb{Z}/6\mathbb{Z}$-action is generated by $\mathbb{Z}/3\mathbb{Z}$-action given by $\xi\cdot(x,y) = (\xi x,y)$ for every third root of unity $\xi$, and the $\mathbb{Z}/2\mathbb{Z}$-action induced by the inverse map of $E$.

The following theorem will be proved as Theorem \ref{thm:toyexamples}.

\begin{thm}\label{thm:main2}
Let $E$ be an elliptic curve with an action of a finite group $\Gamma$ as described above. Then there is  an isomorphism of the $\Gamma$-Hilbert scheme of $T^*E$ with a two-dimensional moduli space of stable parabolic Higgs bundles. In particular these moduli spaces are crepant resolutions of the GIT quotients $T^*E/\Gamma$.
\end{thm}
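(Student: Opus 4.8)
The plan is to deduce the statement from two inputs: the classical two-dimensional McKay correspondence, which exhibits $\Hilb^{\Gamma}(T^*E)$ as a crepant resolution of $T^*E/\Gamma$, and a Fourier--Mukai/spectral argument on the elliptic curve, which reinterprets this $\Gamma$-Hilbert scheme as a moduli space of parabolic Higgs bundles on the quotient $\mathbb{P}^1 = E/\Gamma$. Once the isomorphism with the Higgs moduli space is established, the ``crepant resolution'' clause is immediate, so the real content lies in the moduli interpretation.

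For the resolution statement I would first observe that $\Gamma$ acts on the holomorphic symplectic surface $T^*E$ by symplectomorphisms, the action being induced from an automorphism of $E$ and hence lifting to the cotangent bundle while preserving the canonical symplectic form. The points of $T^*E$ with non-trivial stabiliser are isolated, lying over the torsion points of $E$ fixed by the cyclic action. At a point with stabiliser of order $k$ the action linearises to $(z,w)\mapsto(\zeta z,\zeta^{-1}w)$ with $\zeta$ a primitive $k$-th root of unity, so the local model of the quotient is the $A_{k-1}$ Kleinian singularity $\mathbb{A}^2/(\mathbb{Z}/k)$. By the theorem of Ito--Nakamura (and Bridgeland--King--Reid in higher dimension) the $\Gamma$-Hilbert scheme of a symplectic surface is the unique crepant, equivalently minimal, resolution of the quotient; tallying the fixed-point orders for the $\mathbb{Z}/2,\mathbb{Z}/3,\mathbb{Z}/4,\mathbb{Z}/6$ actions recovers exactly the dual graphs $\widetilde{D}_4,\widetilde{E}_6,\widetilde{E}_7,\widetilde{E}_8$, the affine node corresponding to the class of the elliptic fibre.

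For the moduli interpretation I would exploit the identification of $T^*E$ with the moduli space of rank-one degree-zero Higgs bundles on $E$ (a line bundle together with a scalar Higgs field, using $\Omega^1_E\cong\mathcal{O}_E$). A $\Gamma$-cluster, i.e.\ a $\Gamma$-invariant length-$|\Gamma|$ subscheme of $T^*E$ carrying the regular representation, is the spectral datum of a $\Gamma$-equivariant length-$|\Gamma|$ torsion Higgs sheaf on $E$. Applying the Fourier--Mukai transform on $E$ (the autoduality used throughout the paper, which in the $\widetilde{A}_0$ case is precisely the mechanism of Gorsky--Nekrasov--Rubtsov \cite{MR1859601}) turns such a torsion sheaf into a rank-$|\Gamma|$ Higgs bundle on the dual curve $\hat E\cong E$, equipped with a parabolic structure along the image of the support. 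Imposing $\Gamma$-equivariance and descending along $E\to E/\Gamma=\mathbb{P}^1$ through the standard equivalence between $\Gamma$-equivariant bundles on $E$ and parabolic bundles on $\mathbb{P}^1$ then presents the $\Gamma$-cluster as a parabolic Higgs bundle on $\mathbb{P}^1$, with parabolic points at the branch locus of $\pi$ and with the orbifold weights $(2,2,2,2),(3,3,3),(2,4,4),(2,3,6)$ dictated by the ramification. Matching the resulting rank, degree and weights with the data of Theorem~\ref{thm:toyexamples}, and checking that the regular-representation condition translates into the prescribed stability so that the two quotients agree, yields the isomorphism of moduli spaces.

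The main obstacle is the bookkeeping in this last step: making the Fourier--Mukai transform $\Gamma$-equivariant, so that the automorphism action on $E$ is correctly intertwined with the dual action on $\hat E$ and the character by which $\Gamma$ scales the cotangent direction becomes the correct residue datum, and verifying that the regular-representation condition defining a $\Gamma$-cluster corresponds under descent to a genuine stable parabolic Higgs bundle for the chosen weights rather than to a strictly semistable or non-parabolic object. A clean way to finish, sidestepping part of this, is to observe that the constructed Higgs moduli space carries a Hitchin map onto a one-dimensional base whose generic spectral curve is a smooth genus-one cover of $\mathbb{P}^1$ isomorphic to $E$; the total space is therefore a smooth surface admitting a proper birational morphism onto $T^*E/\Gamma$, and since a normal surface singularity has a unique minimal resolution, it must coincide with $\Hilb^{\Gamma}(T^*E)$. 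I expect the write-up to combine the explicit spectral description, needed to pin down which moduli problem occurs, with this uniqueness argument, needed to conclude the isomorphism on the nose.
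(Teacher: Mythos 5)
Your core construction --- the $\Gamma$-equivariant Fourier--Mukai transform on $E$ combined with the dictionary between $\Gamma$-equivariant bundles on $E$ and parabolic bundles on $E/\Gamma$ --- is exactly the paper's route (Proposition \ref{prop:equivariant}, Lemma \ref{lemma:admissible}, Lemma \ref{lemma:isotropy-representation}), and your observation that the crepant-resolution clause is the easy part is also how the paper treats it. The genuine gap is in how you close the argument. The transform-and-descend construction only produces a morphism \emph{from} the $\Gamma$-Hilbert scheme $Y$ \emph{to} the moduli space $\M$: a $\Gamma$-cluster visibly has a filtration by skyscrapers, so its transform is an admissible Higgs bundle (filtered by rank-one degree-zero Higgs subbundles), and the cluster data becomes flag type, framing vector and stability. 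What is not automatic is the converse --- that \emph{every} stable parabolic Higgs bundle with the prescribed invariants is admissible, i.e.\ that its inverse transform is an honest torsion sheaf rather than a complex. ``Matching rank, degree and weights'' does not give this; it is the crux. The paper settles it by noting that $Y \to \M$ is a morphism of spaces proper over $\A/\Gamma$ (Yokogawa's properness of the parabolic Hitchin map), connected and of equal dimension, hence surjective, which forces admissibility of every point of $\M$ and makes the Fourier--Mukai transform invertible on the nose.

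Your proposed ``clean finish'' hides the same problem: you assert that $\M$ ``admits a proper birational morphism onto $T^*E/\Gamma$'', but constructing such a \emph{morphism} is essentially equivalent to the admissibility statement above (to send a Higgs bundle to a $0$-cycle on $T^*E$ one must already know its transform is a torsion sheaf and take its support). What the generic spectral-curve analysis actually provides is a birational \emph{map} $\M \dashrightarrow T^*E/\Gamma$ over $\A/\Gamma$, and uniqueness of the minimal resolution of a normal surface cannot be applied to a rational map. The argument is repairable: $\M$ is smooth for your weights, carries a symplectic form, so $K_{\M}$ is trivial, both $\M$ and $\Hilb^{\Gamma}(T^*E)$ are proper over $\A/\Gamma$, and a birational map between relatively minimal genus-one fibrations (surfaces with nef canonical class) over the same base is an isomorphism. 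But this repaired version still needs smoothness, connectedness and $K$-triviality of $\M$ together with the identification of generic Hitchin fibers --- that is, most of the bookkeeping you deferred --- so it sidesteps less than you claim. (A smaller slip: the transform of the torsion sheaf is not ``equipped with a parabolic structure along the image of the support''; the parabolic structure appears only at the fixed/branch points and comes from the $\Gamma$-equivariant structure, not from the support of the sheaf.)
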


The proofs of the Theorems \ref{thm:main} and $\ref{thm:main2}$ rely heavily on the machinery of the Fourier-Mukai transform. The use of these techniques in the theory of moduli is by no means original; the importance of the Fourier-Mukai transform for this field of geometry has already been realized in Mukai's paper \cite{MR946249}. While the arguments of Gorsky--Nekrasov--Rubtsov in \cite[Sect. 5.1]{MR1859601} were mainly based on a gauge theoretical foundation, a closer look at their work reveals that it uses the Fourier-Mukai transform.

As has been mentioned above, the generic fibre of the Hitchin fibration $\chi: \M \to \A$ is a self-dual abelian variety. In particular there exists a Zariski open subset $\A^{\sm} \subset \A$ corresponding to smooth spectral curves and a line bundle $\Pp$ on $\M^{\sm} \times_{\A^{\sm}} \M^{\sm}$ inducing a non-trivial autoequivalence $$D^b_{coh}(\M^{\sm}) \to D^b_{coh}(\M^{\sm}).$$ It is an important open problem to determine whether this equivalence extends to the whole base $\A$, where $\M$ is either the moduli space of semistable Higgs bundles or the full moduli stack. We refer to Conjecture 2.5 in \cite{Donagi:fk} for a more precise statement of this conjecture. Recently this has been achieved by Arinkin in \cite{Arinkin:2010uq} over the locus of integral spectral curves. 

To demonstrate the flexibility of our Theorems \ref{thm:main} and \ref{thm:main2} we prove this so-called autoduality conjecture for the moduli space of Higgs bundles in all the cases given by Theorem \ref{thm:main} (see Theorem \ref{thm:autoduality} for a proof).

\begin{thm}\label{thm:main3}
Let $\M$ be one of the two-dimensional moduli spaces of parabolic Higgs bundles considered in Theorem \ref{thm:main}. We denote its Hitchin base by $\A_1$. According to Theorem \ref{thm:main} the Hilbert scheme $\M^{[m]}$ is again a moduli space of parabolic Higgs bundles, with Hitchin base denoted by $\A_m$. There exists a derived equivalence of Fourier-Mukai type $$D^b_{coh}(\M^{[m]}) \cong D^b_{coh}(\M^{[m]}),$$ defined relative to the Hitchin base $\A_m$ and extending the Fourier-Mukai transform over the locus of smooth spectral curves.
\end{thm}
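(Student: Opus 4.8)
The plan is to deduce the autoduality statement for $\M^{[m]}$, whose Hitchin fibration has genuinely singular fibres (compactified Jacobians of non-integral spectral curves), from the corresponding statement for the two-dimensional base $\M$ by means of the derived McKay correspondence. The point is that the McKay equivalence trades the complicated geometry of these fibres for the comparatively transparent equivariant geometry of $T^*E$, where the Fourier--Mukai kernel is simply a Poincaré bundle. Throughout I write $\M$ for one of the two-dimensional moduli spaces of Theorem \ref{thm:main}, so that by Theorem \ref{thm:main2} there is a finite group $\Gamma$ (possibly trivial, in the $\widetilde{A}_0$ case) acting on $T^*E$ with $\M$ the $\Gamma$-Hilbert scheme, a crepant resolution of $T^*E/\Gamma$.

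First I would establish the case $m=1$, i.e.\ autoduality for $\M$ itself. By the derived McKay correspondence of Bridgeland--King--Reid the $\Gamma$-Hilbert scheme induces an equivalence $D^b_{coh}(\M) \cong D^b_\Gamma(T^*E)$ onto the $\Gamma$-equivariant derived category. Since $T^*E \cong E \times \A^1$, with the Hitchin map identified with the projection to $\A^1$, the relative Poincaré bundle $\Pp$ on $(E \times \widehat E) \times \A^1$ induces the classical relative Fourier--Mukai autoequivalence of $D^b_{coh}(T^*E)$. The group $\Gamma$ acts on $E$ by automorphisms fixing the origin, hence acts compatibly on the dual $\widehat E \cong \Pic^0(E)$, and $\Pp$ is canonically $\Gamma$-equivariant; consequently the transform descends to an autoequivalence of $D^b_\Gamma(T^*E)$ and, through the McKay equivalence, to the sought Fourier--Mukai autoequivalence of $D^b_{coh}(\M)$ relative to $\A_1$.

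Next I would bootstrap to arbitrary $m$. Composing the Bridgeland--King--Reid equivalence for the Hilbert--Chow resolution $\M^{[m]} \to \M^{(m)}$ (valid since $\M^{[m]}$ is a crepant resolution of the symmetric product, by Theorem \ref{thm:main}) with the $S_m$-equivariant form of the McKay correspondence yields a chain of equivalences
\begin{equation*}
D^b_{coh}(\M^{[m]}) \cong D^b_{S_m}(\M^m) \cong D^b_{\Gamma \wr S_m}\bigl((T^*E)^m\bigr),
\end{equation*}
identifying $D^b_{coh}(\M^{[m]})$ with the equivariant derived category for the wreath product $\Gamma \wr S_m = \Gamma^m \rtimes S_m$ acting on $(T^*E)^m$. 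On the right-hand side the $m$-fold external Poincaré bundle, relative to $(\A^1)^m$, is manifestly symmetric under the $S_m$-action permuting the factors and $\Gamma^m$-equivariant by the previous paragraph, hence $\Gamma \wr S_m$-equivariant. Descending this kernel through the displayed equivalences produces a Fourier--Mukai autoequivalence of $D^b_{coh}(\M^{[m]})$.

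Finally I would verify the asserted properties: that the transform is relative over the Hitchin base $\A_m$ (which maps to $\Sym^m \A_1$, compatibly with the Hilbert--Chow factorisation of Theorem \ref{thm:main}) and that over the locus of smooth spectral curves it restricts to the generic relative Fourier--Mukai transform of \cite{Donagi:fk}. The latter is essentially automatic, since a smooth spectral curve corresponds to a free $\Gamma \wr S_m$-orbit, over which the McKay correspondence is an honest isomorphism and the external Poincaré bundle is precisely the Poincaré bundle of the abelian variety fibre; a fortiori the construction extends Arinkin's transform \cite{Arinkin:2010uq} over the integral locus. I expect the main obstacle to be the bookkeeping required to make every step relative over $\A_m$: one must check that the derived McKay and Hilbert--Chow equivalences intertwine the respective Hitchin fibrations, and that the descended equivariant kernel is supported on the fibre product $\M^{[m]} \times_{\A_m} \M^{[m]}$ so as to define a genuine relative Fourier--Mukai kernel. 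Establishing the full $\Gamma \wr S_m$-equivariance of the Poincaré kernel together with its compatibility with the descent data is the technical heart of the argument.
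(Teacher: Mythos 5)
Your proposal is correct and follows essentially the same route as the paper: the paper first proves the surface case (Theorem \ref{thm:toyautoduality}) by combining the Bridgeland--King--Reid equivalence with $\Gamma$-equivariant Mukai duality for $T^*E$, and then bootstraps to $\M^{[m]}$ via external powers of the kernel, $S_m$-equivariant descent of Fourier--Mukai equivalences, and Haiman's theorem (Lemma \ref{lemma:Hilbert}), with relativity over the Hitchin base ensured by Lemma \ref{lemma:linear} and the smooth-locus comparison done exactly as you describe. Your wreath-product formulation $D^b_{\Gamma \wr S_m}\bigl((T^*E)^m\bigr)$ is simply the unwound composite of these same equivalences, so the two arguments coincide in substance.
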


The main ingredient of the proof of Theorem \ref{thm:main3} is the derived McKay correspondence as shown in \cite{Bridgeland:fk}. Interestingly both Arinkin's proof of the autoduality over the integral locus in \cite{Arinkin:2010uq} and our proof of Theorem \ref{thm:main3} rely on Haiman's work on the $n!$-conjecture (\cite{MR1839919}).

All of our results described above have a counterpart for moduli spaces of flat connections, where the role of the Fourier-Mukai transform of the Hitchin system is now taken by the Geometric Langlands correspondence. For the sake of avoiding repetition we refer the reader to the Theorems \ref{thm:flattoyexamples}, \ref{thm:localHilbert} and \ref{thm:localduality} instead.\\

\noindent\textit{Acknowledgements.} I would like to thank my supervisor Tam\'as Hausel for explaining the structure of two-dimensional moduli spaces of parabolic Higgs bundles to me, for introducing me to Boalch's conjecture, and for his remarks on this paper. Moreover I would like to thank Philip Boalch and Carlos Simpson for their interest in this work and for useful comments. I thank Tim Adamo and Tom Sutherland for their comments on a draft of this paper. This work has been funded by an EPSRC grant received under the contract EP/G027110/1.

\section{Higgs bundles and local systems}

In this section we will review the theory of Higgs bundles, local systems and parabolic structures. From now on $k$ denotes an algebraically closed field of either vanishing or sufficiently high characteristic. We assume that all schemes and related constructions are defined over $k$.

\subsection{Higgs bundles and the BNR correspondence}

In the following we fix a smooth projective curve $X$ and a positive integer $n \in \mathbb{N}$. For a scheme $S$ we define the notion of an $S$-family of Higgs bundles below. Of particular importance is the case of a $k$-family, which simply is a Higgs bundle on $X$. We denote by $\OmegaX$ the cotangent sheaf on $X$ and by $p_X: X \times S \to X$ the canonical projection.

\begin{defi}\label{def:Higgs}
An $S$-family of Higgs bundles on $X$ consists of a locally free sheaf $\Ee$ of rank $n$ on $X \times S$ and a \emph{Higgs field} given by a morphism of $\Oo_X$-modules $$ \theta: \Ee \to \Ee \otimes p_X^*\OmegaX. $$
This gives rise to a moduli stack of Higgs bundles denoted by $\Higgs(X)$, which sends $S$ to the groupoid of $S$-families of Higgs bundles on $X$.
\end{defi}

Higgs bundles on $X$ can be perceived as certain coherent sheaves on the cotangent bundle $T^*X$. This is known as the \emph{Beauville--Narasimhan--Ramanan}-correspondence (\cite{MR998478}) and was proved in full generality in \cite[Lemma 6.8]{MR1320603}. We will only need a weak version of the BNR correspondence, which is stated and proved below. The map $\pi: T^*X \to X$ is the canonical projection, and $\pi_S$ the base change $$T^*({X\times S/S}) \to X \times S.$$

\begin{prop}[weak BNR correspondence]\label{prop:BNR}
There is a natural equivalence between the groupoid of $S$-families of Higgs bundles $(\Ee,\theta)$ and the groupoid of quasi-coherent $S$-flat sheaves $\mathcal{F}$ on the relative cotangent bundle $T^*(X \times S/S)$ satisfying $$ \pi_{S,*}\mathcal{F} = \Ee. $$
\end{prop}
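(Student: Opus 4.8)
The plan is to realise the relative cotangent bundle as a relative spectrum and then to invoke the standard equivalence between quasi-coherent sheaves on an affine morphism and modules over the corresponding sheaf of algebras, after which the identification of a module structure with a Higgs field is almost formal.

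First I would identify
$$ T^*(X\times S/S) = \Spec_{X\times S}\bigl(\Sym^\bullet_{\Oo_{X\times S}}(p_X^*\mathcal{T}_X)\bigr), $$
where $\mathcal{T}_X = (\OmegaX)^\vee$, using that the relative cotangent sheaf is $\Omega^1_{X\times S/S} = p_X^*\OmegaX$ and that the total space of a locally free sheaf is the relative spectrum of the symmetric algebra of its dual. Writing $\mathcal{A} := \pi_{S,*}\Oo_{T^*(X\times S/S)} = \Sym^\bullet(p_X^*\mathcal{T}_X)$ and using that $\pi_S$ is an affine morphism, the pushforward $\pi_{S,*}$ is an exact, faithful equivalence from quasi-coherent sheaves on $T^*(X\times S/S)$ onto quasi-coherent $\mathcal{A}$-modules on $X\times S$; under this equivalence the underlying $\Oo_{X\times S}$-module of $\mathcal{F}$ is precisely $\pi_{S,*}\mathcal{F}$.

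The key step is then to unwind what a quasi-coherent $\mathcal{A}$-module structure on $\Ee := \pi_{S,*}\mathcal{F}$ amounts to. Because $X$ is a curve, $\OmegaX$ and hence $p_X^*\mathcal{T}_X$ are line bundles, so $\mathcal{A} = \bigoplus_{d\geq 0}(p_X^*\mathcal{T}_X)^{\otimes d}$ is generated in degree one. An $\mathcal{A}$-module structure is therefore determined by the action of its degree-one part, i.e. by an $\Oo_{X\times S}$-linear map $p_X^*\mathcal{T}_X\otimes\Ee \to \Ee$, equivalently a Higgs field $\theta: \Ee \to \Ee\otimes p_X^*\OmegaX$. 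Crucially no further relation is imposed: the only constraint coming from $\Sym^\bullet$ is commutativity of the image of the generators, which for a line bundle amounts to $\theta\wedge\theta = 0$ valued in $\wedge^2\OmegaX = 0$, hence is automatic. This produces a natural bijection between $\mathcal{A}$-module structures and Higgs fields, and since an isomorphism of $\mathcal{A}$-modules is exactly an isomorphism of $\Oo$-modules intertwining the two Higgs fields, the construction is an equivalence of groupoids.

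It remains to match the flatness and finiteness conditions and to verify naturality in $S$. Since $\pi_S$ is affine and lies over $S$, $S$-flatness of $\mathcal{F}$ is equivalent to $S$-flatness of $\pi_{S,*}\mathcal{F} = \Ee$, which holds automatically once $\Ee$ is locally free of rank $n$ (as $X\times S \to S$ is flat); conversely the objects on the right are exactly those $S$-flat $\mathcal{F}$ whose pushforward is locally free of rank $n$. Naturality, i.e. compatibility with base change along $S'\to S$, follows because relative $\Spec$, pushforward along affine morphisms, and the symmetric-algebra description all commute with the flat base change $X\times S' \to X\times S$. The only point demanding genuine care is the key step — the verification that on a curve a module over the relative symmetric algebra carries no integrability obstruction — together with the flatness bookkeeping under the affine pushforward; everything else is a standard property of affine morphisms.
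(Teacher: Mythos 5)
Your proposal is correct and follows essentially the same route as the paper's proof: both exploit that $\pi_S$ is affine, so quasi-coherent sheaves on $T^*(X\times S/S)$ are equivalent to modules over $\pi_{S,*}\Oo_{T^*(X\times S/S)} = \Sym^{\bullet} p_X^*\Theta_X$, and a module structure over this algebra is the same data as a Higgs field. You additionally spell out two points the paper leaves implicit --- that no integrability condition arises because $\Theta_X$ is a line bundle on a curve (so the action of the degree-one generators automatically factors through $\Sym^{\bullet}$), and the flatness bookkeeping under the affine pushforward --- which is a welcome sharpening rather than a deviation.
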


\begin{proof}
Let us denote by $\Theta_X$ the tangent sheaf of $X$. The Higgs field $\theta$ of a Higgs pair $(\Ee,\theta)$ gives rise to a morphism of sheaves $$\Ee \otimes p_X^*\Theta_X \to \Ee,$$ which in turn induces a morphism $$\Sym^{\bullet} p_X^*\Theta_X \otimes \Ee \to \Ee.$$
This endows $\Ee$ with the structure of a module over the algebra $$\Sym^{\bullet} p_X^*\Theta_{X} = \pi_{S,*}\Oo_{T^*_{X \times S/S}}.$$ Since $\pi_S$ is an affine morphism this gives rise to a $S$-flat quasi-coherent sheaf $\mathcal{F}$ on $T^*_{X \times S /S}$, such that $\pi_{S,*}\mathcal{F} = \Ee$.

Vice versa, given an $S$-flat quasi-coherent sheaf $\mathcal{F}$ on $T^*_{X\times S/S}$ pushing-forward to a locally free sheaf $\pi_{S,*}\mathcal{F} =: \Ee$, we can define a Higgs field $\theta$ as follows: the push-forward $\Ee$ is endowed with the structure of an $\Sym^{\bullet} p_X^*\Theta_{X}$-module, in particular we have a map 
$$ p_X^*\Theta_X \otimes \Ee \to \Ee, $$
giving rise to a Higgs field $\theta: \Ee \to \Ee \otimes p_X^*\OmegaX$. 
\end{proof}

If we perceive the Higgs field $\theta$ as a twisted endomorphism of $\Ee$ we see that the expression $$a(\lambda) := \det(\lambda - \theta)$$ is well-defined and is a polynomial $$\lambda^n + a_{n-1} \lambda^{n-1} + \dots + a_0,$$ where $a_i \in H^0(X,\Omega_X^{\otimes (n-i)})$. 

\begin{defi}
Let $\A$ be the affine space associated to the vector space $$\bigoplus_{i=0}^{n-1}H^0(X,\Omega_X^{\otimes (n-i)}),$$ it is called the \emph{Hitchin base}. The morphism of stacks $$\HDol: \Higgs(X) \to \A,$$ sending a Higgs bundle $(\Ee,\theta)$ to the characteristic polynomial $a(\lambda)$ of $\theta$ is called the Hitchin morphism.
\end{defi}

\subsection{Parabolic vector bundles}\label{ssec:parabolic}

As before $X$ denotes a smooth projective curve of genus $g$. We assume that $D = n_1p_1+ \dots + n_k p_k$ is an effective divisor on $X$, i.e. $n_i > 0$ for all $i$. The tuple $\widehat{X} = (X,D)$ will be referred to as a \emph{weighted curve}. We fix a positive integer $n \in \mathbb{N}$ and for $i=1,\dots,k$ partitions $$n = \lambda_{i0} \geq \cdots \geq \lambda_{in_i} = 0.$$ We will also write $l(\lambda_i) = n_i$ to denote the length of a partition. Following \cite[Sect. 2.2]{Hausel:2008ys} this numerical data will be encoded in the following diagram:

\begin{center}
\begin{tikzpicture}
\node at (2,0) {$\bullet$};
\node [above] at (2,0) {n};
\node at (3,1) {$\bullet$};
\node[above] at (3,1) {$\lambda_{11}$};
\node at (4,1) {$\bullet$};
\node[above] at (4,1) {$\lambda_{12}$};
\node at (5,1) {$\dots$};
\node at (6,1) {$\bullet$};
\node[above] at (6,1) {$\lambda_{1(n_1 - 1)}$};
\draw [thick] (2,0) -- (3,1);
\draw [thick] (3,1) -- (4,1);
\draw [thick] (4,1) -- (4.5,1);
\draw [thick] (5.5,1) -- (6,1);

\node at (3,0.5) {$\bullet$};
\node[below] at (3,0.5) {$\;\lambda_{21}$};
\node at (4,0.5) {$\bullet$};
\node[below] at (4,0.5) {$\;\lambda_{22}$};
\node at (5,0.5) {$\dots$};
\node at (6,0.5) {$\bullet$};
\node[below] at (6,0.5) {$\;\lambda_{2(n_2-1)}$};
\draw [thick] (2,0) -- (3,0.5);
\draw [thick] (3,0.5) -- (4,0.5);
\draw [thick] (4,0.5) -- (4.5,0.5);
\draw [thick] (5.5,0.5) -- (6,0.5);

\node at (5,0) {\rotatebox{90}{$\dots$}};

\node at (3,-0.5) {$\bullet$};
\node[below] at (3,-0.5) {$\;\lambda_{k1}$};
\node at (4,-0.5) {$\bullet$};
\node[below] at (4,-0.5) {$\;\lambda_{k2}$};
\node at (5,-0.5) {$\dots$};
\node at (6,-0.5) {$\bullet$};
\node[below] at (6,-0.5) {$\;\lambda_{k(n_k-1)}$};
\draw [thick] (2,0) -- (3,-0.5);
\draw [thick] (3,-0.5) -- (4,-0.5);
\draw [thick] (4,-0.5) -- (4.5,-0.5);
\draw [thick] (5.5,-0.5) -- (6,-0.5);

\draw[thick] (2,0) to [out=130,in=20] (0.4,1);
\draw[thick] (0.4,1) to [out=220,in=180] (2,0);

\node at (0.5,0) {\rotatebox{90}{$\dots$}};
\node at (0.7,0) {g};

\draw[thick] (2,0) to [out=230,in=340] (0.4,-1);
\draw[thick] (0.4,-1) to [out=140,in=180] (2,0);
\end{tikzpicture}
\end{center}

Let $S$ be a scheme. Below we define the notion of an $S$-family of parabolic vector bundles. We denote by $\iota_i: S \to X \times S$ the base change of the closed immersion $$p_i: \Spec k \to X,$$ corresponding to the marked points $p_i \in X(k)$.

\begin{defi}
An $S$-family of parabolic vector bundles on $\widehat{X}$ consists of a locally free sheaf $\Ee$ of rank $n$ on $X \times S$ together with flags $$ 0 = F_{in_i} \subset F_{i(n_i-1)} \subset \cdots \subset F_{i0} = \iota_i^*\Ee $$ of locally free subsheaves of $\iota_i^*\Ee$ of rank $$\rk F_{ij} = \lambda_{ij},$$ such that the successive quotients are locally free. The moduli stack of parabolic vector bundles will be denoted by $\Bun(\widehat{X}) = \Bun(\widehat{X},n,\lambda_{\bullet})$
\end{defi}

Often the definition of parabolic bundles includes weights (see Definition \ref{defi:stable}) as part of the data; we prefer to view them as part of the stability condition.

Given a vector bundle $E$ on $X$, a point $x \in X(k)$, and a subspace $L \subset E_x$ of the fibre $E_x = E/E(-x)$, we can define a locally free sheaf $E_L$ by the formula
$$E_L := \ker (E \to E_x/L),$$ where $E \to E_x/L$ is the obvious map factoring through $E \to E_x$. This process can be reversed, since $$L = \ker(E_x \to \coker (E_L \to E)).$$

The process described above gives rise to an alternative description of parabolic vector bundles as flags of locally free sheaves: For $i=1,\dots,k$ and $j = -n+1,\dots,0$ we define $E_{ij}$ to be the locally free subsheaf of $E$ given by $F_{j+n} \subset E_{p_i}$. For arbitrary $j \in \mathbb{Z}$ we can write $j = j' + m$, where $-n < j' \leq 0$ and $m \in \mathbb{Z}$, and define $E_{ij} := E_{ij'}\otimes \Oo_X(mp_i)$. We conclude that there is an alternative description of parabolic bundles in terms of nested sequences of locally free sheaves (\cite[Sect. 3]{MR1040197}). 

\begin{lemma}\label{lemma:sequences}
The stack $\Bun(\widehat{X},n,\lambda_{\bullet})$ is equivalent to the stack of families of $\mathbb{Z}^k$-indexed sequences of locally free sheaves $(V_{i})_{i \in \mathbb{Z}^k}$ on $X$ satisfying $$V_i \subset V_{i+e_j},$$ where $(e_j)$ denotes the canonical basis of $\mathbb{Z}^k$, and $$V_{i+n_je_j} = V_i \otimes \Oo_X(p_j).$$
\end{lemma}

The interpretation of parabolic vector bundles as sequences of locally free sheaves $\mathcal{V} = (V_{i})_{i \in \mathbb{Z}^k}$ suggests a definition for the \emph{dual} parabolic vector bundle $\mathcal{V}^{\dual}$ given by the sequence $$V^{\dual}_i := (V_{-i})^{\dual}.$$ This description of the dual parabolic bundle is easily seen to be compatible with the following definition.

\begin{defi}\label{defi:dual}
Let $(\widehat{E},F_{\bullet\bullet})$ be a parabolic bundle on $\widehat{X}$. The dual parabolic bundle $\widehat{E}^{\dual}$ has underlying vector bundle $E^{\dual}$ and flag data given by $$F^{\dual}_{ij} = \ker(E^{\dual}_{x_i} \to F^{\dual}_{i(n-j)}).$$
\end{defi}

\begin{lemma}\label{lemma:dual}
If $\widehat{E} = (\widehat{E},F_{\bullet\bullet})$ is a parabolic bundle on $\widehat{X}$ corresponding to the sequence $\mathcal{V} = (V_{i})_{i \in \mathbb{Z}^k}$ then the dual $\widehat{E}^{\dual}$ corresponds to the dual sequence $\mathcal{V}^{\dual} = ((V_{-i})^{\dual})_{i \in \mathbb{Z}^k}$.
\end{lemma}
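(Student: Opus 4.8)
The plan is to show that the sequence $\mathcal{V}^{\dual} = ((V_{-i})^{\dual})_{i \in \mathbb{Z}^k}$ is a legitimate object of the stack of Lemma \ref{lemma:sequences} and that the parabolic bundle it represents is exactly the one produced by Definition \ref{defi:dual}. First I would check that $\mathcal{V}^{\dual}$ satisfies the two axioms of Lemma \ref{lemma:sequences}. Each inclusion $V_{-i-e_j} \subset V_{-i}$ is an isomorphism away from $p_j$, so its cokernel is a skyscraper; applying $\Hhom(-,\Oo_X)$ to $0 \to V_{-i-e_j} \to V_{-i} \to Q \to 0$ gives $0 \to (V_{-i})^{\dual} \to (V_{-i-e_j})^{\dual} \to \mathcal{E}xt^1(Q,\Oo_X) \to 0$ (the $\Hhom(Q,\Oo_X)$ and $\mathcal{E}xt^1((V_{-i})^{\dual},\Oo_X)$ terms vanish), hence $V^{\dual}_i \subset V^{\dual}_{i+e_j}$. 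The periodicity $V_{m+n_je_j} = V_m \otimes \Oo_X(p_j)$ dualizes to $V^{\dual}_{i+n_je_j} = V^{\dual}_i \otimes \Oo_X(p_j)$. In particular the underlying bundle is $V^{\dual}_0 = (V_0)^{\dual} = E^{\dual}$, which agrees with the underlying bundle of $\widehat{E}^{\dual}$.

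It then remains to match the flag data. Both constructions are local at the marked points and independent between distinct points, so I would fix a point $p_i$ and argue in a local trivialization. The dictionary preceding Lemma \ref{lemma:sequences} recovers the flag of the parabolic bundle attached to a sequence $(W_m)$ with underlying bundle $G = W_0$ by the formula $\ker(G_{p_i} \to \coker(W_{-j} \hookrightarrow G))$. Applied to $\mathcal{V}^{\dual}$, whose underlying bundle is $E^{\dual}$ and whose negatively indexed terms are $V^{\dual}_{-j} = (V_j)^{\dual}$ with $V_j \supseteq V_0 = E$ for $j \ge 0$, the $j$-th flag step is $\ker\bigl(E^{\dual}_{p_i} \to \coker((V_j)^{\dual} \hookrightarrow E^{\dual})\bigr)$, where the inclusion is the $\Hhom(-,\Oo_X)$-dual of $E = V_0 \hookrightarrow V_j$.

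The crux is the ensuing local computation. Using the periodicity one identifies $V_j$, up to a twist by $\Oo_X(p_i)$, with the elementary modification $E_L = \ker(E \to E_{p_i}/L)$ of $E$ along the flag step $L = F_{i(n-j)}$. Choosing a local coordinate $t$ at $p_i$ and a basis of $E_{p_i}$ adapted to $L$, one writes $(V_j)^{\dual}$ explicitly as a sub-$\Oo_X$-module of $E^{\dual}$ with skyscraper cokernel of length $\dim L$, and verifies that $\ker(E^{\dual}_{p_i} \to \coker((V_j)^{\dual}\hookrightarrow E^{\dual}))$ is precisely the annihilator $\mathrm{Ann}(L) \subset E^{\dual}_{p_i}$. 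Conceptually, $\Hhom(-,\Oo_X)$ converts the elementary modification of $E$ along $L$ into the elementary modification of $E^{\dual}$ along $\mathrm{Ann}(L)$, the length being preserved because $\mathcal{E}xt^1$ of a skyscraper is again a skyscraper of the same length. Since $\mathrm{Ann}(F_{i(n-j)}) = \ker(E^{\dual}_{p_i} \to F^{\dual}_{i(n-j)})$, the resulting flag is exactly $F^{\dual}_{ij}$ of Definition \ref{defi:dual}, so the two notions of dual coincide.

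The main obstacle I anticipate is bookkeeping rather than any deep input. One must keep straight the index reflection $i \mapsto -i$ together with the period twist by $\Oo_X(p_i)$, which combine to produce the reversal $j \mapsto n-j$ appearing in Definition \ref{defi:dual}, and one must make the duality of elementary modifications genuinely precise, i.e. pin down the $\mathcal{E}xt^1$-skyscraper and verify that the induced subspace is the annihilator. The case of non-full flags, that is general partitions $\lambda_{i\bullet}$, adds no new difficulty: it only amounts to allowing repeated steps in the same computation.
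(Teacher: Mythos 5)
Your proof is correct and is essentially the paper's own argument: both recover the $j$-th flag step of $\mathcal{V}^{\dual}$ as $\ker(E^{\dual}_{x_i} \to \coker(V^{\dual}_{-je_i} \to E^{\dual}))$ and then identify this cokernel by dualizing the elementary-modification sequence $0 \to E \to V_{je_i} \to Q \to 0$, the paper compressing your local annihilator computation into the one-line identification $\coker(V^{\dual}_{-je_i} \to E^{\dual}) = \coker(E \to V_{je_i})^{\dual} = F^{\dual}_{i(n-j)}$. Your additional checks (well-definedness of the dual sequence, the $\mathcal{E}xt^1$ bookkeeping, and the normalization $V_0 = E$ of the indexing) are details the paper leaves implicit.
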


\begin{proof}
For every $i = 1,\dots, k$ we denote by $e_i$ the canonical basis element in $\mathbb{Z}^k$. We need to compute $$\ker(E^{\dual}_{x_i} \to \coker(V^{\dual}_{-je_i} \to E^{\dual}))$$ for $j = 0, \dots, n_i-1$. But $\coker(V^{\dual}_{-je_i} \to E^{\dual})) = \coker(E \to V_{je_i})^{\dual} = F^{\dual}_{i(n-j)}$.
\end{proof}

Stability conditions for parabolic vector bundles are parametrized by finite increasing sequences of positive real numbers $(\alpha_{ij}) \in [0,1)$, where $i = 1, \dots, k$ and $j = 0, \dots n_i-1$. Following \cite{MR1411302} we denote by $m_{ij} = \lambda_{ij} - \lambda_{ij+1}$ and define the parabolic degree of a parabolic vector bundle $\widehat{E}$ to be $$\deg \widehat{E} = \deg E + \sum_{i=1}^k \sum_{j=0}^{n_i-1}\alpha_{ij}m_{ij}.$$ The parabolic slope of $\widehat{E}$ is given by $$\mu(\widehat{E}) = \frac{\deg \widehat{E}}{\rk E}.$$

\begin{defi}\label{defi:stable}
A parabolic vector bundle $\widehat{E}$ is said to be stable if for every proper subbundle $F$ of $E$ with the induced parabolic structure, we have $$\mu(\widehat{F}) < \mu(\widehat{E}).$$ If we only have $\mu(\widehat{F}) \leq \mu(\widehat{E})$ for every proper subbundle $F$ we say that $\widehat{E}$ is semistable.
\end{defi}

\subsection{Parabolic Higgs bundles}\label{ssec:parHiggs}

\begin{defi}\label{defi:parabolicHiggs}
An $S$-family of parabolic Higgs bundles on $\widehat{X}$ consists of an $S$-family of parabolic vector bundles $(\Ee,\mathcal{F}_{\bullet\bullet})$ and a parabolic Higgs field given by a morphism of $\Oo_X$-modules $$\theta: \Ee \to \Ee \otimes p_X^*\OmegaX(p_1 + \dots p_k). $$
The latter is required to satisfy the condition that $$\res_{p_i}\theta(\mathcal{F}_{ij})\subset \mathcal{F}_{ij+1}.$$ A parabolic Higgs bundle is stable (respectively semistable) if the condition of Definition \ref{defi:stable} is satisfied for all proper subbundles $F$ preserved by the Higgs field $\theta$. The moduli stack of parabolic Higgs bundles will be denoted by $\Higgs(\widehat{X}) = \Higgs(\widehat{X},n,\lambda_{\bullet})$. The moduli space of stable parabolic Higgs bundles $\ssHiggs$ is constructed as the rigidification of the open substack of stable Higgs bundles of $\Higgs(X)$.
\end{defi}

As before there exists a Hitchin base $$\A := \bigoplus_{i=0}^{n-1}H^0(X,\Omega_X^{i+1}(ip_1 + \cdots + ip_k))$$ and a Hitchin morphism $$\HDol: \Higgs(\widehat{X}) \to \A,$$ which sends a parabolic Higgs bundle to the characteristic polynomial of its Higgs field $\theta$. Using a method developed by Langton, Yokogawa shows in Corollary 5.13 and 1.6 of \cite{MR1231753} that the Hitchin map induces a proper map on the moduli space of parabolic Higgs bundles. In the case of Higgs bundles without parabolic structures this is a Theorem of Nitsure (\cite[Thm. 6.1]{MR1085642}).

\begin{thm}[Nitsure, Yokogawa]
Let $\Higgs(\widehat{X})$ be a moduli space of semistable parabolic Higgs bundles of a fixed type. Then the Hitchin map $$\chi: \Higgs(\widehat{X}) \to \A$$ is proper.
\end{thm}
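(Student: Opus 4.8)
The plan is to verify the valuative criterion of properness for $\chi$. Since the moduli space $\Higgs(\widehat{X})$ is separated and of finite type over $k$, the Hitchin morphism is itself separated and of finite type, so it suffices to treat the case of a discrete valuation ring. Let $R$ be such a ring, with fraction field $K$, generic point $\eta$ and closed point $0$, and suppose we are given a $K$-point of $\Higgs(\widehat{X})$ together with an $R$-point of $\A$ extending the image of the former under $\chi$. Separatedness supplies uniqueness of any lift, so the entire content is the \emph{existence} of an extension of the generic parabolic Higgs bundle to an $R$-family of semistable parabolic Higgs bundles with the prescribed characteristic polynomial.

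Next I would pass to the spectral picture. The $R$-point of $\A$ determines an $R$-flat family of spectral curves $\mathcal{X} \to \Spec R$ inside the total space of $\OmegaX(p_1 + \dots + p_k)$, and, by the meromorphic and parabolic enhancement of the BNR correspondence of Proposition \ref{prop:BNR}, the given generic parabolic Higgs bundle corresponds to a torsion-free sheaf $\mathcal{F}_\eta$ on the generic spectral curve $\mathcal{X}_\eta$ together with its parabolic refinement at the points lying over $p_1,\dots,p_k$. The problem is thereby recast as the problem of extending $\mathcal{F}_\eta$ to an $R$-flat coherent sheaf on $\mathcal{X}$ whose closed fibre defines a \emph{semistable} parabolic Higgs bundle of the correct numerical type.

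The core of the argument is Langton's method, as adapted to the parabolic setting by Yokogawa in \cite{MR1231753}. One first extends $\mathcal{F}_\eta$ to \emph{some} $R$-flat coherent sheaf $\mathcal{F}$ on $\mathcal{X}$ by taking a coherent extension and saturating; this is always possible, but the closed fibre $\mathcal{F}_0$ need not be semistable. If it is not, let $B \subset \mathcal{F}_0$ be its maximal destabilising parabolic Higgs subsheaf and replace $\mathcal{F}$ by the kernel of the composite $\mathcal{F} \to \mathcal{F}_0 \to \mathcal{F}_0 / B$. This elementary modification preserves $R$-flatness, the generic fibre, and the characteristic polynomial, while altering the closed fibre. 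Iterating produces a sequence of modifications, and the crucial claim is that after finitely many steps the closed fibre becomes semistable.

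Termination is where the real work lies, and I expect it to be the main obstacle. The standard strategy attaches to each step a discrete invariant — measuring the discrepancy between $\mathcal{F}_0$ and its would-be semistable replacement through the parabolic slopes of its Harder--Narasimhan pieces — and shows both that this invariant strictly improves at each modification and that it cannot improve indefinitely. The latter finiteness rests on the \emph{boundedness} of the family of semistable parabolic Higgs bundles of fixed numerical type and fixed Hitchin image: this boundedness confines the relevant parabolic slopes to a finite range and forces the procedure to stop. Throughout, one must check carefully that each elementary modification respects the flag structure and the residue condition $\res_{p_i}\theta(\mathcal{F}_{ij}) \subset \mathcal{F}_{ij+1}$, so that the modified sheaf genuinely corresponds to a parabolic Higgs bundle; the required boundedness and compatibility statements are precisely those established in \cite{MR1231753}.
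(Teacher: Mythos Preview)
The paper does not supply its own proof of this theorem: it simply records the result and attributes it to Nitsure \cite{MR1085642} in the non-parabolic case and to Yokogawa \cite{MR1231753} in the parabolic case, noting that the latter proceeds ``using a method developed by Langton.'' Your proposal is therefore not a reconstruction of anything in the paper but rather an outline of the argument found in those references, and as such it is broadly accurate: the valuative criterion plus Langton's iterated elementary modifications, with termination controlled by boundedness, is indeed how Yokogawa establishes properness.

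One comment on your route. The detour through spectral curves and a ``parabolic enhancement of the BNR correspondence'' is unnecessary and potentially delicate: the correspondence between parabolic Higgs bundles and sheaves on the spectral curve in $\OmegaX(p_1+\cdots+p_k)$ is not as clean as in the non-parabolic case, since the flag and residue conditions do not translate into a simple torsion-freeness condition. Yokogawa's actual argument runs Langton's procedure directly on the category of parabolic $\Omega$-pairs (or parabolic Higgs sheaves), performing the elementary modifications on the underlying filtered sheaf with its Higgs field, and never passes to the spectral cover. Your final paragraph, where you flag the need to check compatibility of the modifications with the flags and the residue condition, is exactly the place where working intrinsically with parabolic Higgs bundles rather than spectral sheaves pays off.
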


\subsection{Parabolic local systems}

Closely related to the theory of Higgs bundles is the theory of local systems (i.e. vector bundles with a flat connection). The work of  Hitchin, Simpson and others (\cite{MR887284}, \cite{MR1040197}, \cite{MR1179076}) has exhibited a natural hyperk\"ahler structure on the moduli space of stable Higgs bundles defined over a complex curve. One obtains the moduli space of irreducible local systems by hyperk\"ahler rotation. In particular there is a canonical diffeomorphism relating the moduli space of stable Higgs bundles and irreducible local systems.

Using the notation from subsection \ref{ssec:parHiggs} we define the notion of an $S$-family of parabolic local systems.

\begin{defi}\label{def:parLoc}
An $S$-family of parabolic local systems on $\widehat{X}$ consists of an $S$-family of parabolic vector bundles $(\Ee,\mathcal{F}_{\bullet\bullet})$ and a parabolic flat connection given by a morphism of $k$-linear sheaves $$\nabla: \Ee \to \Ee \otimes p_X^*\OmegaX(p_1 + \dots p_k), $$
satisfying the Leibniz identity and the condition that $$\res_{p_i}\nabla(\mathcal{F}_{ij})\subset \mathcal{F}_{ij},$$ with eigenvalues of the residue given by the canonical weights from Lemma \ref{lemma:canonicalweights}. A parabolic local system is stable (respectively semistable) if the condition of Definition \ref{defi:stable} is satisfied for all proper subbundles $F$ preserved by the connection $\nabla$. 
The moduli stack of parabolic local systems will be denoted by $\Loc(\widehat{X}) = \Loc(\widehat{X},n,\lambda_{\bullet})$. If $\widehat{X} = X$, i.e. there are no marked points, we will simply speak of local systems on the curve $X$. The moduli space of stable parabolic local systems $\ssLoc$ is constructed as the rigidification of the open substack of stable local systems of $\Loc(X)$.
\end{defi}

\subsection{Orbifolds and parabolic structures}

Given a weighted curve $\widehat{X}$ we can associate to it an orbifold $\widetilde{X}$, assuming that the characteristic of $k$ is zero or large enough. We emphasize that the word orbifold refers to a smooth Deligne-Mumford stack (\cite[Def. 4.1]{MR1771927}) in our context.

The orbifold $\widetilde{X}$ is defined by the following glueing data: let $\mathbb{D}_x$ denote the formal disc $\Spec \widehat{\Oo}_x$ around a point $x \in X(k)$. Given an effective divisor $D \subset X$ represented by the effective linear combination $n_1p_1 + \dots n_k p_k$, we let $\mathbb{D}_i := \mathbb{D}_{p_i}$ and $$U := X - D.$$ The fibre product $U \times_X \mathbb{D}_i$ is given by the punctured formal disc $$ \mathbb{D}^{\bullet}_i := \mathrm{Frac}\; \widehat{\Oo}_x. $$
Let us denote by $$[n]: \Spec k((t)) \to \Spec k((t))$$ the faithfully flat morphism given by $t \mapsto t^n$. Note that this is an \'etale morphism if and only if the characteristic $p$ of $k$ does not divide $n$. Picking a formal coordinate $t_i$ around every point $p_i$ we obtain morphisms \begin{center}\begin{tikzpicture}\node at (0,0) {$[n_i]:$};\node at (1,0) {$\mathbb{D}^{\bullet}_i$};\node at (2,0) {$\mathbb{D}^{\bullet}_i$}; \draw[->,thick] (1.3, 0) -- (1.7,0); \end{tikzpicture}\end{center} for every $i = 1, \dots, n$.

On the disc $\mathbb{D}_i$ we define the obvious action of the group $\mu_{n_i}$ of $n_i$-th roots of unity by multiplication. Using this action we glue the quotient stacks $[\mathbb{D}_i/\mu_{n_i}]$ back to $U$ using the $\mu_{n_i}$-equivariant maps induced by the $[n_i]$
\begin{center}
\begin{tikzpicture}

\node at (0,0) {$\mathbb{D}^{\bullet}_i$};
\node at (1,0) {$\mathbb{D}^{\bullet}_i$};
\node at (2,0) {$U.$};
\node at (0.5,0.3) {$[n_i]$};

\draw [->,thick] (0.3,0) -- (0.7,0);
\draw [->,thick] (1.3,0) -- (1.7,0);

\end{tikzpicture}
\end{center}
According to Theorem 6.1 in \cite{MR0399094} this defines an algebraic stack independently of the characteristic $p$ of $k$. Nonetheless this is a smooth Deligne-Mumford stack if either $p = 0$ or $\gcd(p,n_i) = 1$ for all $i$.

\begin{ass}\label{ass}
The field $k$ is algebraically closed and its characteristic $p$ is either zero or satisfies $\gcd(p,n_i) = 1$ for all $i$. 
\end{ass}

It is a result of Furuta--Steer (\cite[sect. 5]{MR1185787}) that vector bundles on the orbifold $\widetilde{X}$ translate into parabolic vector bundles on the weighted curve $\widehat{X}$. Nasatyr--Steer (\cite[Sect. 5A]{MR1375314}) discuss the analogous result for Higgs bundles. The local systems case is treated in \cite{Biswas:2012uq} by Biswas--Heu. In the remaining part of this subsection we explain how this is proved in the realm of algebraic geometry instead of the analytic theory of Riemann surfaces used in \cite{MR1185787} and \cite{MR1375314}.

Orbicurves as considered here can also be seen as certain root stacks associated to weighted curves (and this is what we will be doing implicitly). The correspondence described above is therefore reminiscent of a correspondence between parabolic vector bundles and vector bundles on root stacks, as established by N. Borne in \cite{borne}. We refer to \emph{loc. cit.} for algebraic proofs of the statements below.

The correspondence between vector bundles on the orbifold $\widetilde{X}$ and parabolic bundles on $\widehat{X}$ is based on the following two observations. The natural morphism
$$\tau: \widetilde{X} \to X$$
realizes $X$ as the coarse moduli stack for the Deligne-Mumford stack $\widetilde{X}$. The second observation is that the functor $\tau_*$ from quasi-coherent sheaves on $\widetilde{X}$ to quasi-coherent sheaves on $X$ is not faithful. Nonetheless it sends a vector bundle $\widetilde{E}$ on $\widetilde{X}$ to a vector bundle $E := \tau_*\widetilde{E}$, since every torsion-free sheaf on a smooth curve is locally free.

\begin{ex}\label{ex:disc}
Let $\mu_r$ be the cyclic group $r$-th roots of unity. It acts on $\mathbb{D}:= \Spec k[[t]]$ via $\xi \cdot t = \xi t$, where $\xi$ is an $r$-th root of unity. If $\widetilde{X}$ is the quotient stack $$[\mathbb{D}/\mu_r]$$ we can identify the coarse moduli space $X$ with $\Spec k[[t^r]]$. The functor $\tau_*$ sends a $\mu_r$-equivariant $k[[t]]$-module $M$ to the $k[[t^r]]$-module $M^{\mu_r}$.
\end{ex}

To reconcile the loss of information under the map $\widetilde{E} \mapsto E$ we define a $\mathbb{Z}$-indexed sequence of line bundles $(L_i)_{i\in \mathbb{Z}}$ for every orbifold point of the orbifold $\widetilde{X}$, satisfying $$L_i \subset L_{i+1}$$ for all $i \in \mathbb{Z}$, and send $\widetilde{E}$ to the parabolic vector bundle associated to the filtered locally free sheaf $(\tau_*(\widetilde{E} \otimes L_i))_{i \in \mathbb{Z}}$.

\begin{defi}
Let $\widehat{X}$ be a weighted curve and $\widetilde{X}$ the associated orbicurve. For every marked point $p_i$ of $\widehat{X}$ we pick an $n$-th root $L_{i1}$ of $\tau^*\Oo_X(p_j)$. The line bundle $L_{ij}$ is defined to be $$L_{ij} := L_{i1}^j.$$
\end{defi}

The existence of $L_{i1}$ can be seen locally on $X$ using the notation of Example \ref{ex:disc}. Let $x$ be the origin of the disc $\mathbb{D}$. Since $\tau^*\Oo_X(x)$ is given by the $k[[t]]$-module $t^{-n}k[[t]]$, we see that $t^{-1}k[[t]]$ is an $n$-th root of $\tau^*\Oo_X(x)$.

By a formal-disc argument we can show the following remark:

\begin{rmk}
Let $n_i$ denote the order of the stabilizer group of the point $x_i$, respectively the weight of $p_i$. Then we have  $\tau^*\Oo_X(p_i) = L_{in_i}$.
\end{rmk}

Using this remark and Lemma \ref{lemma:sequences} it is a consequence of the projection formula $$\tau_*\tau^*\Oo_X(p_i) \cong \Oo_X(p_i) \otimes \tau_*\Oo_{\widetilde{X}} \cong \Oo_X(p_i)$$ that the sequence of locally free sheaves $$E_{ij} := \tau_*(\widetilde{E} \otimes L_{ij})$$ gives rise to a parabolic vector bundle $\widehat{E} := (E,F_{\bullet \bullet})$ on $\widehat{X}$. We denote the map sending an orbibundle $\widetilde{E}$ to the parabolic bundle $\widehat{E}$ by $A$. 

\begin{prop}[Furuta--Steer]\label{prop:parabolic-orbibundles}
The association $$A: \widetilde{E} \mapsto \widehat{E}$$ described above gives rise to an equivalence of groupoids of vector bundles on the orbicurve $\widetilde{X}$ and parabolic vector bundles on the weighted curve $\widehat{X}$.
\end{prop}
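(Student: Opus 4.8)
The plan is to reduce the claim to a purely local statement in a formal neighbourhood of each marked point and to settle it there by the representation theory of $\mu_{n_i}$. Since $\tau$ restricts to an isomorphism over the open curve $U = X - D$, both the functor $A$ and any candidate inverse are forced to be the identity there, so the entire content of the proposition is concentrated at the orbifold points $p_i$. I would therefore pass to the quotient stack $[\mathbb{D}/\mu_{n_i}]$ of Example \ref{ex:disc}, whose coarse space is $\Spec k[[t^{n_i}]]$, prove the equivalence for this local model, and afterwards glue over $X$ along the \'etale covers $[n_i]$ guaranteed by Assumption \ref{ass}.

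For the local model, the key observation is that the category of $\mu_{n_i}$-equivariant vector bundles on $\mathbb{D} = \Spec k[[t]]$ is equivalent to the category of $\mathbb{Z}/n_i\mathbb{Z}$-graded free $k[[t]]$-modules in which multiplication by $t$ has degree one. Here Assumption \ref{ass} enters decisively: because $\gcd(p,n_i) = 1$ the group $\mu_{n_i}$ is linearly reductive, so every equivariant module splits as a direct sum of its isotypic components and $\tau_* = (-)^{\mu_{n_i}}$ is exact. Tensoring by $L_{ij} = L_{i1}^j$ shifts this grading by $j$ --- this is exactly the content of the local computation $\tau^*\Oo_X(x) = t^{-n}k[[t]]$ and the remark $\tau^*\Oo_X(p_i) = L_{in_i}$ --- so taking invariants $\tau_*(\widetilde{E}\otimes L_{ij})$ extracts the successive graded pieces and assembles them into a nested sequence of locally free sheaves on $\Spec k[[t^{n_i}]]$. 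One checks directly that this sequence satisfies the inclusion and periodicity relations of Lemma \ref{lemma:sequences}, and that its jumps recover a flag in the fibre of ranks $\lambda_{ij}$, i.e. a parabolic structure.

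The inverse functor is then produced by running the same dictionary backwards: a nested sequence $(V_i)$ determines a $\mathbb{Z}/n_i\mathbb{Z}$-graded $k[[t]]$-module by placing the graded pieces in the appropriate weights and letting $t$ act through the prescribed inclusions, hence a $\mu_{n_i}$-equivariant bundle, and faithfully flat descent along $[n_i]$ globalises this to an orbibundle on $\widetilde{X}$. Full faithfulness follows because, under linear reductivity, a morphism of equivariant modules is the same datum as a collection of morphisms of graded pieces compatible with the $t$-action, which is precisely a morphism of the associated nested sequences, i.e. a parabolic morphism; essential surjectivity follows from the local classification together with the descent.

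I expect the main obstacle to be the careful bookkeeping at the orbifold points --- matching the grading shift induced by $L_{ij}$ with the indexing of the flag so that the ranks $\lambda_{ij}$ and the periodicity $V_{i+n_je_j} = V_i\otimes\Oo_X(p_j)$ come out correctly --- together with the verification that $\tau_*$ remains exact and compatible with base change, which is exactly where Assumption \ref{ass} is indispensable. In characteristic dividing some $n_i$ the functor $\tau_*$ would fail to be exact and the isotypic decomposition would collapse, so the argument genuinely requires the hypothesis on the characteristic.
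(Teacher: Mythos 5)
Your proof is correct, but note that the paper itself never proves Proposition \ref{prop:parabolic-orbibundles}: it constructs the functor $A$ and then defers to Furuta--Steer \cite{MR1185787} for the analytic argument and to Borne \cite{borne} for an algebraic proof via root stacks. Your argument is thus a self-contained substitute rather than a reproduction, and it is essentially the algebraic proof in miniature: the reduction to the formal disc, the identification of $\mu_{n_i}$-equivariant free $k[[t]]$-modules with $\mathbb{Z}/n_i\mathbb{Z}$-graded modules on which $t$ acts in degree one, and the observation that $\tau_*(\widetilde{E}\otimes L_{ij})$ extracts the isotypic pieces are exactly the local dictionary the paper records in Example \ref{ex:disc}, Remark \ref{rmk:explicit} and Lemma \ref{lemma:isotropy-representation}, promoted to a proof by gluing. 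Two points deserve care if you write this out in full. First, in your inverse construction the map from the top graded piece back to the bottom one is not one of the ``prescribed inclusions'': it must use the periodicity identification $V_{i+n_je_j} = V_i \otimes \Oo_X(p_j)$ of Lemma \ref{lemma:sequences} (concretely, multiplication by $t^{n_j}$); with this wrap-around the resulting module is finitely generated and torsion-free over the discrete valuation ring $k[[t]]$, hence free of the expected rank. Second, the globalisation step is not plain faithfully flat descent along $[n_i]$: since the formal discs are not Zariski open, you need Beauville--Laszlo-type formal glueing to reconstruct bundles on $X$, respectively on $\widetilde{X}$, from their restrictions to $U$ and to the (quotient stacks of) formal discs, together with Galois descent over the punctured discs $\mathbb{D}_i^{\bullet}$ --- which is where the \'etaleness of $[n_i]$ guaranteed by Assumption \ref{ass} actually enters. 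What your route buys is an explicit, self-contained argument in which the role of linear reductivity is transparent; what the paper's citation buys is brevity and generality, since Borne's correspondence applies to arbitrary root stacks rather than just the orbicurves at hand.
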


As a parabolic bundle is obtained from an orbibundle by push-forward, one should expect the respective degrees of the bundles to be related. A Riemann-Roch computation (using \cite[Cor. 4.14]{MR1710187} for Deligne-Mumford stacks) reveals the precise relation between the two degrees.

\begin{lemma}\label{lemma:canonicalweights}
Under the equivalence of Proposition \ref{prop:parabolic-orbibundles} the degree of an orbibundle $\widetilde{E}$ is equal to the parabolic degree of the parabolic bundle $\widehat{E}$ with respect to the so-called \emph{canonical weights} $\alpha_{ij} := \frac{j}{n_i}$.
\end{lemma}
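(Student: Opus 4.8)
The plan is to compare the Euler characteristics of $\widetilde{E}$ and of $E=\tau_*\widetilde{E}$ and to isolate the two degrees by Riemann--Roch, after reducing to the case of a line bundle. Throughout I use Assumption \ref{ass}: the orbicurve $\widetilde{X}$ is a tame smooth Deligne--Mumford stack, so every stabilizer $\mu_{n_i}$ is linearly reductive. Consequently $\tau_*$ is exact on coherent sheaves, $R\tau_*\widetilde{E}=\tau_*\widetilde{E}=E$ (the fibres of $\tau$ are finite), and the $\mu_{n_i}$-representation on the fibre of $\widetilde{E}$ at $p_i$ splits into isotypic components; I write $m_{i,c}$ (for $0\le c<n_i$) for the multiplicity of the weight-$c$ character, so that $\sum_c m_{i,c}=n$. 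By construction of the functor $A$ these multiplicities are exactly the jumps $m_{ij}=\lambda_{ij}-\lambda_{i(j+1)}$ of the parabolic flag. I take $\deg\widetilde{E}:=\deg\det\widetilde{E}$, where $\deg\colon\operatorname{Pic}(\widetilde{X})\to\mathbb{Q}$ is the additive degree homomorphism provided by the Grothendieck--Riemann--Roch theorem for Deligne--Mumford stacks (\cite[Cor. 4.14]{MR1710187}) and satisfies $\deg\tau^{*}M=\deg M$ for line bundles $M$ on $X$.

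Both sides of the claimed identity $\deg\widetilde{E}=\deg E+\sum_{i}\sum_{j}\alpha_{ij}m_{ij}$ are additive in short exact sequences of orbibundles: the left-hand side because $\deg\det$ is additive, and the right-hand side because $\tau_*$ is exact and the multiplicities $m_{i,c}$ are additive by linear reductivity. Since $K_0(\widetilde{X})\otimes\mathbb{Q}$ is generated by classes of line bundles --- which follows from Borne's root-stack description of $\widetilde{X}$ (\cite{borne}) together with the analogous fact for the smooth curve $X$ --- it suffices to treat the case $\widetilde{E}=\widetilde{L}$ of a line bundle. Here exactly one multiplicity $m_{i,c_i}$ is nonzero at each point, where $c_i\in\{0,\dots,n_i-1\}$ is the local weight of $\widetilde{L}$ at $p_i$, and the asserted formula reads $\deg\widetilde{L}=\deg\tau_*\widetilde{L}+\sum_i c_i/n_i$ with the single canonical weight $\alpha_{i c_i}=c_i/n_i$.

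To evaluate the two degrees of a line bundle I use the generators $L_{i1}$. Tensoring $\widetilde{L}$ with $L_{i1}^{-c_i}$ trivializes the stabilizer action at every orbifold point, so $\widetilde{L}\cong\tau^{*}M\otimes\bigotimes_i L_{i1}^{c_i}$ for some line bundle $M$ on $X$. The defining relation $L_{i1}^{n_i}=\tau^{*}\Oo_X(p_i)$ forces $\deg L_{i1}=1/n_i$, whence $\deg\widetilde{L}=\deg M+\sum_i c_i/n_i$. For the coarse degree, the projection formula gives $\tau_*\widetilde{L}=M\otimes\tau_*\bigl(\bigotimes_i L_{i1}^{c_i}\bigr)$, and a local Riemann--Roch computation at each $p_i$ --- the quotient of $\bigotimes_i L_{i1}^{c_i}$ by $\Oo_{\widetilde X}$ is torsion supported at the orbifold points and carries only the characters of weights $1,\dots,c_i$, all nonzero, hence has vanishing $\mu_{n_i}$-invariants --- shows that $\tau_*\bigl(\bigotimes_i L_{i1}^{c_i}\bigr)$ has the same Euler characteristic as $\Oo_X$ and therefore degree $0$. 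Thus $\deg\tau_*\widetilde{L}=\deg M$, so $\deg\widetilde{L}-\deg\tau_*\widetilde{L}=\sum_i c_i/n_i$ as required; by the additivity established above this extends to all orbibundles, giving $\deg\widetilde{E}-\deg E=\sum_i\tfrac1{n_i}\sum_{c}c\,m_{i,c}=\sum_i\sum_j\alpha_{ij}m_{ij}$.

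The main obstacle is the convention-matching rather than the geometry: one must verify that the local weight $c$ of an isotypic summand corresponds to the parabolic index $j=c$, so that the summand contributes the canonical weight $\alpha_{i,c}=c/n_i$ and not $\alpha_{i,n_i-c}$. This requires unwinding the definition of $A$ at the level of the filtration $E_{ij}=\tau_*(\widetilde{E}\otimes L_{ij})$ and tracking, as $j$ increases, the weight at which a new $\mu_{n_i}$-invariant section of $\widetilde{E}\otimes L_{ij}$ appears; it is here that the normalizations fixed in Example \ref{ex:disc} and in the definition $L_{ij}=L_{i1}^j$ must be used consistently, since a sign error would replace the canonical weights by their complements. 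The remaining inputs --- exactness of $\tau_*$, the value $\deg L_{i1}=1/n_i$, the vanishing of the coarse degree of the purely stacky factor, and the reduction to line bundles in $K_0$ --- are standard consequences of tameness and the root-stack description, with the numerical normalization of the degree supplied by \cite[Cor. 4.14]{MR1710187}.
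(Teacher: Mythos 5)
Your proof is correct, but it takes a genuinely different route from the paper, which in fact offers no written proof of this lemma at all: the only indication given is the sentence preceding the statement, namely that one should apply To\"en's Grothendieck--Riemann--Roch theorem for Deligne--Mumford stacks \cite[Cor. 4.14]{MR1710187} directly, i.e.\ compare $\chi(\widetilde{X},\widetilde{E})$ with $\chi(X,\tau_*\widetilde{E})$ (using $R\tau_*\widetilde{E}=\tau_*\widetilde{E}$) and recognize the inertia contributions at the stacky points as $\sum_{i,j}\alpha_{ij}m_{ij}$. You instead use To\"en's theorem only to normalize the degree homomorphism $\deg\colon\operatorname{Pic}(\widetilde{X})\to\mathbb{Q}$, and then argue by d\'evissage: additivity of both sides in short exact sequences (valid by tameness), reduction to line orbibundles, and the two explicit facts $\deg L_{i1}=1/n_i$ and $\tau_*\bigl(\bigotimes_i L_{i1}^{c_i}\bigr)\cong\Oo_X$. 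What this buys is self-containedness, since you never need the actual local terms of the orbifold Riemann--Roch formula, which is the entire content of the paper's intended one-line argument; the price is the reduction machinery, where your appeal to Borne \cite{borne} for generation of $K_0(\widetilde{X})\otimes\mathbb{Q}$ by line bundles could be replaced by the more elementary remark that on a smooth orbicurve every orbibundle is an iterated extension of line orbibundles (saturate a rank-one subsheaf and induct). The one step you flag but leave unexecuted --- that a local character $c$, normalized so that $\widetilde{L}\cong L_{i1}^{c}$ near $p_i$, produces the flag jump at parabolic index $j=c$ rather than $j=n_i-c$ --- does come out as you claim, and closes in two lines from Remark \ref{rmk:explicit}: for $\widetilde{E}=L^{c}$ on $[\mathbb{D}/\mu_{r}]$ with $0\le c<r$, the space $F_j=(\widetilde{E}\otimes L^{-j}/\widetilde{E}\otimes L^{-r})^{\Gamma}$ contains the class of the invariant section $t^{0}$ exactly when $j\le c$ and vanishes for $j>c$, so the unique jump is $m_{i,c}=1$, contributing $c/n_i$ in agreement with $\deg L_{i1}^{c}=c/n_i$. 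With that verification written out, your argument is a complete and valid proof of the lemma.
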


In the following remark we make the above correspondence more explicit using the notation of Example \ref{ex:disc}.

\begin{rmk}\label{rmk:explicit}
Let $E$ be a $\mu_r$-equivariant vector bundle on $\mathbb{D}$. The projection formula implies that we have $$E^{\Gamma} \otimes \Oo/\Oo(-x) \cong (E \otimes \Oo/L^{-r})^{\Gamma}.$$ Using this we may identify the corresponding parabolic vector bundle $\widehat{E}$ on $\widehat{\mathbb{D}}$ with the one given by the vector bundle $E^{\Gamma}$ together with the flags $$F_i := (E \otimes L^{-i}/L^{-r})^{\Gamma} \subset (E \otimes \Oo/L^{-r})^{\Gamma} \cong E^{\Gamma} \otimes \Oo/\Oo(-x).$$
\end{rmk}

The next lemma and its proof should clarify how the equivariant structure of an vector bundle on an orbicurve is translated into the flag data of a parabolic vector bundle.

\begin{lemma}\label{lemma:isotropy-representation}
Let $\Gamma = \mu_r$ be the finite cyclic group of order $r$ acting on $\mathbb{D} = \Spec k[[t]]$ through $\xi \cdot t = \xi t$, where $\xi$ is an $r$-th root of unity. Then the isomorphism classes of rank $n$ parabolic vector bundles on the weighted curve $\widehat{[\mathbb{D}/\Gamma]}$ correspond to isomorphism classes of representations of $\Gamma$ on an $n$-dimensional vector space. The regular representation of $\Gamma$ corresponds to a rank $r$ vector bundle with parabolic structure given by a complete flag. 
\end{lemma}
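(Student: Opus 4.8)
The plan is to run everything through the already-established dictionary between parabolic bundles and orbibundles, and then to classify the latter by their fibre representation at the orbifold point. First I would invoke Proposition~\ref{prop:parabolic-orbibundles}: a rank $n$ parabolic vector bundle on $\widehat{[\mathbb{D}/\Gamma]}$ is the same datum as a vector bundle on the orbicurve $[\mathbb{D}/\Gamma]$, which by definition is a $\Gamma$-equivariant vector bundle on $\mathbb{D}=\Spec A$, $A=k[[t]]$, i.e.\ a finitely generated free $A$-module $M$ carrying a semilinear $\Gamma$-action in which $t$ has weight one (as in Example~\ref{ex:disc}). Writing $B=A^{\Gamma}=k[[t^{r}]]$ for the coordinate ring of the coarse space, the correspondence should be \emph{taking the fibre at the orbifold point}: the assignment $M\mapsto M\otimes_{A}k=M/tM$, which is a finite-dimensional representation of $\Gamma$ since the origin is the $\Gamma$-fixed point. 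The content of the lemma is that this assignment is a bijection on isomorphism classes.

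The key step, and the one I expect to be the main obstacle, is the classification of equivariant bundles by their fibre. Here I would use Assumption~\ref{ass}, which guarantees $p\nmid r$, so that $\Gamma=\mu_{r}$ is linearly reductive; consequently every $\Gamma$-module splits into weight spaces $M=\bigoplus_{j\in\mathbb{Z}/r}M_{(j)}$ and taking isotypic components is exact. Let $A\langle j\rangle$ denote the $\Gamma$-equivariant free $A$-module of rank one whose generator has weight $j$. Given $M$, decompose the fibre $M/tM$ into characters and let $n_{j}$ be the multiplicity of $\chi_{j}$. I would then lift a character-adapted basis of $M/tM$ to weight-homogeneous elements of $M$ (possible because the surjection $M\twoheadrightarrow M/tM$ is $\Gamma$-equivariant, hence weight-preserving). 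By the graded form of Nakayama's lemma over the local ring $(A,(t))$ these lifts generate $M$, and since they number $\sum_{j}n_{j}=\rk_{A}M$ they form an $A$-basis; each lift of pure weight $j$ spans a copy of $A\langle j\rangle$. This yields
\[
M\;\cong\;\bigoplus_{j\in\mathbb{Z}/r}A\langle j\rangle^{\oplus n_{j}},
\]
so $M$ is determined up to isomorphism by the multiplicities $n_{j}$, that is, by the isomorphism class of $M/tM$; conversely every representation is realised by the evident direct sum. This is the step requiring care, as it is exactly the point where linear reductivity is indispensable.

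It remains to match numerical data and to treat the regular representation. Since $\tau_{*}M=M^{\Gamma}=M_{(0)}$ is free over $B$ of rank $\rk_{A}M=\dim_{k}(M/tM)$, the rank of the parabolic bundle equals the dimension of the corresponding representation, so rank $n$ parabolic bundles match $n$-dimensional representations. For the regular representation $k[\Gamma]\cong\bigoplus_{j=0}^{r-1}\chi_{j}$ one has $n_{j}=1$ for every $j$ and hence $n=r$. To see that the parabolic structure is then a complete flag, I would compute the flag directly via Remark~\ref{rmk:explicit}, identifying $F_{i}=(t^{i}M/t^{r}M)^{\Gamma}$ inside the fibre $(M/t^{r}M)^{\Gamma}\cong E^{\Gamma}\otimes\Oo/\Oo(-x)$. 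Filtering $t^{i}M/t^{r}M$ by the submodules $t^{\ell}M$ and using that multiplication by $t^{\ell}$ shifts weights, each graded piece $t^{\ell}M/t^{\ell+1}M$ is $(M/tM)\otimes\chi_{\ell}$ as a $\Gamma$-module; exactness of invariants then gives that the $i$-th successive quotient $F_{i}/F_{i+1}$ has dimension equal to the multiplicity of a single character $\chi_{-i}$ in the fibre representation. For the regular representation all these multiplicities are $1$, so every successive quotient is one-dimensional and the flag is complete, which is the final assertion.
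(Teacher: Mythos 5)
Your proof is correct, and its core coincides with the paper's: both reduce, via Proposition~\ref{prop:parabolic-orbibundles}, to $\Gamma$-equivariant bundles on $\mathbb{D}$, read the representation off the zero fibre, and identify the flag quotients with character multiplicities. The difference is in how the bijection on isomorphism classes is justified. The paper argues at the level of sections: a section of $E_i$ non-vanishing at the origin is an invariant section of $E\otimes L^i$, hence an eigenline of character $\chi^{-i}$ in $E_0$, and conversely an eigenvector of the fibre lifts to an invariant section (a lift that implicitly uses linear reductivity); it then concludes that ``the parabolic structure encodes the $\Gamma$-action on $E_0$'' and treats the regular representation simply as the sum of all characters. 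You instead prove the explicit structure theorem $M\cong\bigoplus_{j}A\langle j\rangle^{\oplus n_j}$ by lifting a character-adapted basis of $M/tM$ and applying Nakayama, which makes both injectivity (the fibre determines the equivariant bundle up to isomorphism) and surjectivity airtight and isolates exactly where Assumption~\ref{ass} (i.e.\ $p\nmid r$) enters; your identification $F_i/F_{i+1}\cong(t^iM/t^{i+1}M)^{\Gamma}$ via exactness of invariants then gives a cleaner derivation of the complete-flag assertion than the paper's one-line appeal to the character decomposition of $k[\Gamma]$. The trade-off is only length: the paper's section/eigenline argument is shorter, while your version fills in the classification step that the paper leaves implicit.
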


\begin{proof}
Let us denote by $\chi$ the character associated to the zero fibre of the line orbibundle $L$. By assumption we have $\chi(\xi) = \xi^{-1}$. If $E$ is a bundle on $[\mathbb{D}/\Gamma]$ and $(E_i)_{i\in \mathbb{Z}}$ denotes the corresponding parabolic bundle on $\widehat{[\mathbb{D}/\Gamma]}$. A section $s$ of $E_i$ non-vanishing at $0 \in \mathbb{D}$ corresponds to a $\Gamma$-invariant section of $E \otimes L^i$. This gives rise to an eigenline in $E_0$ on which $\Gamma$ acts by $\chi^{-i}$.

Vice versa given an eigenline $\langle v\rangle \subset (E)_0$ on which $\Gamma$ acts by $\chi^k$ then this gives rise to an eigenline in $(E \otimes L^{-k})_0$, on which $\Gamma$ acts trivially. This in turn gives rise to a section of $E_k$. We see that the parabolic structure encodes the $\Gamma$-action on the zero fibre $E_0$.

To verify the last assertion we only have to observe that the regular representation of $\Gamma$ is the direct sum $$\bigoplus_{k=0}^r V_{\chi^k},$$ where $V$ is a one-dimensional vector space with $\Gamma$ acting on it through the character specified in the subscript.
\end{proof}

As a next step we investigate what happens to extra structures like a Higgs field or a connection under the transition $\widetilde{E} \mapsto \widehat{E}$. The Definitions \ref{def:Higgs} and \ref{def:parLoc} are \'etale local in nature with respect to the curve $X$, in particular this allows us to make sense of Higgs bundles and local systems on an orbicurve $\widetilde{X}$.

\begin{prop}[Nasatyr--Steer, Biswas--Heu]\label{prop:orbi-parabolic}
Under the correspondence of Proposition \ref{prop:parabolic-orbibundles} a Higgs field $\widetilde{\theta}$ on an orbibundle $\widetilde{E}$ gets transformed to a parabolic Higgs field $\widehat{\theta}$ on $\widehat{E}$. Similarly a flat connection $\widetilde{\nabla}$ is sent to a parabolic flat connection $\widehat{\nabla}$. This defines a natural equivalence of groupoids between $S$-families of Higgs bundles (resp. local systems) on the orbicurve $\widetilde{X}$ and $S$-families of parabolic Higgs bundles (resp. parabolic local systems) on the weighted curve $\widehat{X}$.
\end{prop}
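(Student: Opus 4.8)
The plan is to reduce the statement to a computation on the formal disc and then to identify the sheaf of one-forms on the orbicurve with a twist of the logarithmic one-forms on the coarse curve. Since the Definitions \ref{def:Higgs} and \ref{def:parLoc} are étale local with respect to $X$, and since $A$ is already known to be an equivalence on the level of bundles (Proposition \ref{prop:parabolic-orbibundles}), it suffices to check that $A$ carries Higgs fields to parabolic Higgs fields and flat connections to parabolic connections in the local model $\widetilde{X} = [\mathbb{D}/\mu_r]$ of Example \ref{ex:disc}, with coarse space $X = \Spec k[[s]]$ and $s = t^r$. The starting point is the identity of line orbibundles
$$\Omega^1_{\widetilde{X}} \cong L^{-1} \otimes \tau^*\Omega^1_X(x),$$
which follows from $ds = r\,t^{r-1}\,dt$ together with $\tau^*\Oo_X(x) = t^{-r}k[[t]]$ and $L^{-1} = t\,k[[t]]$; note that $r$ is invertible by Assumption \ref{ass}.

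For the Higgs case I would exploit that $\widetilde{\theta}$ is $\Oo$-linear. Tensoring $\widetilde{\theta}\colon \widetilde{E} \to \widetilde{E}\otimes\Omega^1_{\widetilde{X}}$ with $L^i$ and using the displayed identity gives a map $\widetilde{E}\otimes L^i \to \widetilde{E}\otimes L^{i-1}\otimes\tau^*\Omega^1_X(x)$. Applying $\tau_*$ and the projection formula yields $E_i \to E_{i-1}\otimes\Omega^1_X(x)$, so that the induced parabolic field $\widehat{\theta}$ strictly lowers the filtration index by one. Passing to residues on the fibre at $x$ this is exactly the condition $\res_x\widehat{\theta}(\mathcal{F}_{j})\subset\mathcal{F}_{j+1}$ of Definition \ref{defi:parabolicHiggs}; in particular the residue is nilpotent on the associated graded. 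Reading the construction backwards, which is legitimate because the identity of one-forms is an isomorphism and $A$ is an equivalence, produces the inverse assignment and shows that every parabolic Higgs field arises in this way.

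The connection case is where the real work lies: here $\widetilde{\nabla}$ is \emph{not} $\Oo$-linear, so the tensoring argument does not apply verbatim, and the residue will turn out to be \emph{semisimple} with eigenvalues the canonical weights rather than nilpotent. I would pass to $\Gamma$-isotypic components. On the $\chi^k$-eigencomponent the $\mu_r$-invariant sections are generated by $t^{m_0}e$ with $m_0 \equiv -k \pmod{r}$ and $0 \leq m_0 < r$, so that $E = \tau_*\widetilde{E}$ inherits its filtration from these lowest generators. Feeding a flat generator $t^{m_0}e$ into $\widetilde{\nabla}$ and rewriting $dt = \tfrac{t}{r}\tfrac{ds}{s}$ produces a logarithmic connection on $E$ whose residue acts on the $\chi^k$-piece by the scalar $m_0/r = j/n_i$, matching the canonical weights of Lemma \ref{lemma:canonicalweights} and \emph{preserving} rather than shifting the filtration, as demanded in Definition \ref{def:parLoc}. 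Flatness imposes no extra condition since $\widetilde{X}$ is a curve and $\Omega^2_{\widetilde{X}} = 0$, and the Leibniz rule for $\widehat{\nabla}$ is inherited from that of $\widetilde{\nabla}$ under $\tau_*$.

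Finally, the action on morphisms is simply the restriction of the already-established equivalence $A$ on bundles, and the computations above show it is compatible with the extra structures on both sides; globalizing over the finitely many marked points assembles the local equivalences into the asserted natural equivalence of groupoids. I expect the main obstacle to be the connection case: one must verify that a connection regular on the orbicurve acquires precisely a simple (logarithmic) pole on the coarse curve, and that the resulting residue eigenvalues are the prescribed canonical weights $j/n_i$. This is exactly the content of the isotypic computation together with the coordinate change $dt = \tfrac{t}{r}\tfrac{ds}{s}$, which is what distinguishes the semisimple residue of a parabolic connection from the nilpotent residue of a parabolic Higgs field.
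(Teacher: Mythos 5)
The paper itself offers no proof of this proposition: it is imported from Nasatyr--Steer and Biswas--Heu, with the remark that algebraic proofs (via root stacks) can be found in Borne's work. So your attempt has to be judged against the standard argument, and most of it is exactly that argument, carried out correctly: the identity $\Omega^1_{\widetilde{X}} \cong L^{-1} \otimes \tau^*\Omega^1_X(x)$ (valid since $r$ is invertible by Assumption \ref{ass}), the consequence that an $\Oo$-linear Higgs field shifts the filtration $E_i = \tau_*(\widetilde{E}\otimes L^i)$ down by one step, which is precisely the residue condition $\res_x\widehat{\theta}(F_j)\subset F_{j+1}$ of Definition \ref{defi:parabolicHiggs}, and the isotypic computation with $dt = \tfrac{t}{r}\tfrac{ds}{s}$ showing that $\tau_*\widetilde{\nabla}$ is logarithmic, preserves the flags, and induces the canonical weight $j/r$ on the graded pieces. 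For the Higgs case your treatment is complete: since a Higgs field is $\Oo$-linear data, i.e. a morphism $\widehat{E} \to \widehat{E}\otimes\Omega^1_X(x)$ shifting the parabolic filtration, it can indeed be transported backwards through the equivalence $A$ of Proposition \ref{prop:parabolic-orbibundles}.

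The gap is essential surjectivity in the connection case. Your closing claim that ``the action on morphisms is simply the restriction of $A$'' yields full faithfulness, but a connection is \emph{not} a morphism in the category on which $A$ is an equivalence -- it is not $\Oo$-linear -- so nothing you have written shows that every parabolic connection with canonical-weight residues lies in the image of your functor. This backward direction is exactly where the eigenvalue condition is consumed, and it requires its own computation. Concretely: choose a $k[[s]]$-basis $(v_a)$ of $E$ adapted to the flag, with $v_a$ at level $m_a$, set $\widetilde{E} := \bigoplus_a k[[t]]\, t^{-m_a}\tau^*v_a$, and write $\widehat{\nabla}v_a = \sum_b g_{ab}(s)\tfrac{ds}{s}v_b$; then $\tau^*\widehat{\nabla}$ is regular on $\widetilde{E}$ if and only if $g_{aa}(0) = m_a/r$, $g_{ab}(0)=0$ for $m_b<m_a$ (flag preservation), \emph{and} $g_{ab}(0)=0$ for $m_b=m_a$, $a\neq b$ -- that is, the graded residue must be the exact scalar $j/r$, not merely have $j/r$ as its only eigenvalue; otherwise a pole of order one survives. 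Alternatively you could finish formally: equivariant connections on $\widetilde{E}$ and parabolic connections on $\widehat{E}$ with canonical residues are pseudo-torsors under $\Hom^{\Gamma}_{\Oo}(\widetilde{E},\widetilde{E}\otimes\Omega^1_{\widetilde{X}})$ and under strongly parabolic Higgs fields respectively, your Higgs-case bijection identifies these two groups, your forward map is equivariant for these actions, and the source is non-empty locally (take $d$ in an equivariant trivialization as in Example \ref{ex:disc}); a map of non-empty pseudo-torsors over an isomorphism of groups is a bijection. Either repair is short, but without one of them the asserted equivalence of groupoids is only proved for Higgs bundles, not for local systems.
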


\section{Derived equivalences}

This section is a collection of technical results on derived categories that will be of use later. The geometrically-minded reader is encouraged to skip it and come back to it as required.

\subsection{Derived categories}

We begin by reviewing the theory of quasi-coherent sheaves and their derived categories on stacks. A good summary of this theory, together with theoretical justification for some of the definitions given below, is contained in \cite[Sect. 2]{Arinkin:2009yq}. 

The data of a quasi-coherent sheaf $F$ on a prestack $\X$ is equivalent to a collection of quasi-coherent sheaves $F_{U \to \X}$ for every affine scheme $U$ with a morphism $U \to \X$, in a way compatible with pullback. This compatibility condition stipulates the existence of isomorphisms $$\phi_{V \to U}: \psi^*F_{U \to \X} \to F_{V \to \X}$$ for every morphism $\psi: V \to U$ of $\X$-schemes, which are required to obey a compatibility law of their own. In the language of category theory we have exhibited the category of quasi-coherent sheaves on $\X$ as the $2$-limit of the categories $\mathrm{QCoh}(U)$ of quasi-coherent sheaves on $U$ $$\mathrm{QCoh}(\X) := \lim_{U \in \mathrm{Aff}/\X} \mathrm{QCoh}(U).$$
If $\X$ is an algebraic stack it is possible to replace the above $2$-limit by a less intimidating one. Let $Y \to \X$ be an atlas, i.e. a smooth surjective morphism, where $Y$ is a scheme. Faithfully flat descent theory implies that $\mathrm{QCoh}(\X)$ is equivalent to the $2$-limit

\begin{center}
\begin{tikzpicture}
\node at (0,0) {$\mathrm{QCoh}(\X)$};
\node at (1,0) {$\cong$};
\node at (1.52,0.03) {$\lim$};

\node at (2.7,0) {$\mathrm{QCoh}(Y)$};
\node at (5.5,0) {$\mathrm{QCoh}(Y \times_{\X} Y)$};
\node at (9.3,0) {$\mathrm{QCoh}(Y \times_{\X} Y \times_{\X} Y)$};

\node at (1.87,0) {[};
\node at (11.075,0) {].};

\draw [->,thick] (3.6,-0.1) -- (4.1,-0.1);
\draw [->,thick] (3.6,0.1) -- (4.1,0.1);

\draw [->,thick] (6.9,0) -- (7.4,0);
\draw [->,thick] (6.9,0.16) -- (7.4,0.16);
\draw [->,thick] (6.9,-0.16) -- (7.4,-0.16);
\end{tikzpicture}
\end{center}

This $2$-limit amounts to the simple fact that the data of a quasi-coherent sheaf on $\X$ is equivalent to a quasi-coherent sheaf $F_Y$ on the atlas $Y$ endowed with descent data. In the special case that $\X$ is a global quotient stack $[Y/G]$, where $G$ is a smooth algebraic group scheme, this descent data amounts to a $G$-equivariant structure on $F_Y$ (\cite[Def. I.3.46]{MR2222646}). 

Below we give a definition of the bounded derived category of coherent sheaves $D_{coh}^b(\X)$ on a stack $\X$. In the cases of interest to us this definition is equivalent to the one given in \cite{MR1771927}, but in the case of the unbounded derived category $D_{qcoh}(\X)$ of quasi-coherent sheaves we prefer to use a definition requiring slightly more machinery.

\begin{defi}
Let $\X$ be a quasi-compact algebraic stack with affine diagonal and atlas $Y \to X$. We define the bounded derived category $D^b_{coh}(\X)$ of coherent sheaves on $\X$ to be the full subcategory of the derived category of $\mathrm{QCoh}(\X)$ of complexes $F^{\bullet}$ whose cohomology sheaves are coherent when pulled back to $Y$ and vanish for almost all degrees.
\end{defi}

It is a well-known fact that the non-functoriality of cones leads to technical complications in the theory of derived categories. For instance, it is not possible to obtain $D_{qoh}(\X)$ as a $2$-limit of the derived categories $D_{qcoh}(U)$ for affine schemes $U \to \X$ as we did it for the abelian category above. And neither is the category of $G$-equivariant objects in $D_{qcoh}(Y)$ equivalent to the derived category of the quotient stack $[Y/G]$. This defect of $D_{qcoh}(\X)$ can be fixed by replacing the derived category by an \emph{enhancement}, i.e. a closely related object, from which $D_{qcoh}(\X)$ can be fully recovered, but which possesses a functorial construction of cones. One way to do this is by using the theory of stable $\infty$-categories \cite{Lurie:qf}. Every affine scheme $U$ has an associated stable $\infty$-category $QC(U)$, whose homotopy category is the derived category of quasi-coherent sheaves on $U$. For a prestack $\X$ one defines $QC(\X)$ as the homotopy limit of $\infty$-categories $$QC(\X) := \lim_{U \in \mathrm{Aff}/\X} QC(U),$$ in analogy with the definition of the category of quasi-coherent sheaves $\mathrm{QCoh}(\X)$ given at the beginning of this section.

\begin{defi}
Let $\X$ be an algebraic stack, the derived category of quasi-coherent sheaves $D_{qcoh}(\X)$ is defined to be the homotopy category of the stable $\infty$-category $QC(\X)$.
\end{defi}

Whenever possible we will formulate proofs in the language of derived categories, but complementing our presentation by using stable $\infty$-categories. The inherent functoriality in the language of stable $\infty$-categories allows straightforward constructions, which would be more intricate in the world of triangulated categories. We demonstrate this principle with an easy lemma, which lies at the heart of our treatment of the autoduality conjecture in the special cases considered here (Theorem \ref{thm:toyautoduality} and \ref{thm:autoduality}). A second proof, avoiding stable $\infty$-categories, will be supplied as Lemma \ref{lemma:equivariant}.

\begin{lemma}\label{lemma:equivariant_stable}
Let $X$ and $Y$ be two schemes, endowed with an action of an abstract finite group $\Gamma$; we assume that there is an equivalence of $\infty$-categories $$QC(X) \cong QC(Y),$$ which is $\Gamma$-equivariant. This induces an equivalence $$QC([X/\Gamma]) \cong QC([Y/\Gamma]).$$
\end{lemma}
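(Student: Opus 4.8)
The plan is to realize both sides as homotopy limits of one and the same diagram shape and to reduce the statement to the functoriality of limits of $\infty$-categories. First I would recall that an action of the finite group $\Gamma$ on an $\infty$-category $\mathcal{C}$ is precisely the datum of a functor $\rho \colon B\Gamma \to \mathrm{Cat}_{\infty}$ with $\rho(*) \cong \mathcal{C}$, where $B\Gamma$ denotes the one-object groupoid associated to $\Gamma$. In particular the action of $\Gamma$ on the scheme $X$ equips $QC(X)$ with such a structure, via pullback $\gamma \mapsto \gamma^*$, and likewise for $Y$; call the resulting functors $\rho_X, \rho_Y \colon B\Gamma \to \mathrm{Cat}_{\infty}$.

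The key geometric input, which I would establish next, is the descent identification
$$QC([X/\Gamma]) \cong \lim_{B\Gamma} \rho_X.$$
To see this, note that the atlas $X \to [X/\Gamma]$ is a $\Gamma$-torsor, so its \v{C}ech nerve is the simplicial scheme $[n] \mapsto X \times \Gamma^{\times n} = \coprod_{\gamma \in \Gamma^{\times n}} X$. By faithfully flat descent, which is built into the definition of $QC$ as the homotopy limit $\lim_{U \in \mathrm{Aff}/\X} QC(U)$, the category $QC([X/\Gamma])$ is the totalization of the associated cosimplicial $\infty$-category $[n] \mapsto \prod_{\gamma \in \Gamma^{\times n}} QC(X)$. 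This cosimplicial object is exactly the bar resolution computing $\lim_{B\Gamma}\rho_X$, so the two limits agree. The same identification holds verbatim for $Y$.

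With these preparations the argument is formal. I would interpret the hypothesis, a $\Gamma$-equivariant equivalence $QC(X) \cong QC(Y)$, as an equivalence $\rho_X \simeq \rho_Y$ in the functor $\infty$-category $\mathrm{Fun}(B\Gamma, \mathrm{Cat}_{\infty})$; this is the content of the equivalence being compatible with the $\Gamma$-actions together with the requisite higher coherences. Applying the limit functor $\lim_{B\Gamma} \colon \mathrm{Fun}(B\Gamma, \mathrm{Cat}_{\infty}) \to \mathrm{Cat}_{\infty}$, which preserves equivalences, yields an equivalence $\lim_{B\Gamma}\rho_X \simeq \lim_{B\Gamma}\rho_Y$. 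Combining this with the descent identifications of the previous paragraph produces the desired equivalence $QC([X/\Gamma]) \cong QC([Y/\Gamma])$.

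The main obstacle is conceptual rather than computational: it lies in pinning down the correct $\infty$-categorical meaning of ``$\Gamma$-equivariant equivalence''. One must ensure that the hypothesis supplies not merely a compatible system of isomorphisms $\gamma^* F \cong F \gamma^*$ for each $\gamma$, but a coherent equivalence of $B\Gamma$-diagrams in $\mathrm{Cat}_{\infty}$; it is precisely this coherence that the functoriality of $\lim_{B\Gamma}$ exploits, and it is where the passage to stable $\infty$-categories of \cite{Lurie:qf} pays off, since the corresponding statement for triangulated categories would fail for exactly the non-functoriality-of-cones reasons recalled above. Once the hypothesis is read in $\mathrm{Fun}(B\Gamma, \mathrm{Cat}_{\infty})$, no further work is needed.
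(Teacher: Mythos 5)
Your proposal is correct and follows essentially the same route as the paper's proof: writing $QC([X/\Gamma])$ as the homotopy limit over the \v{C}ech nerve of the atlas $X \to [X/\Gamma]$, identifying that limit with $\lim_{B\Gamma}$ of the action functor, and concluding by functoriality of limits applied to the equivalence of $B\Gamma$-diagrams. Your closing remark on what ``$\Gamma$-equivariant equivalence'' must mean (an equivalence in $\mathrm{Fun}(B\Gamma,\mathrm{Cat}_\infty)$, not just objectwise compatibility) makes explicit a coherence point the paper leaves implicit, but the argument is the same.
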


\begin{proof}
Since $X \to [X/\Gamma]$ is an atlas for the stack $[X/\Gamma]$ it is possible to write $QC([X/\Gamma])$ as the homotopy limit $$\lim_{J \in \Delta^{op}} QC(X^{[J]}),$$ where $\Delta$ denotes the category of finite non-empty ordered sets and $$X^{[J]} := X \times \Gamma^J.$$ Let $B\Gamma$ be the nerve of the groupoid associated to the group $\Gamma$. The $\Gamma$-action on $X$ induces an action on $QC(X)$, which is encoded by an $\infty$-functor from $B\Gamma$ to the $\infty$-category of $\infty$-categories $$\mathsf{act}: B\Gamma \to \infty-\mathrm{Cat}.$$ The above homotopy limit can be rewritten as $$\lim_{B\Gamma} \mathsf{act},$$ which is a purely $\infty$-categorical construction, and therefore depends only on the $\infty$-category $QC(X)$ and the $\Gamma$-action up to equivalence. In general we refer to such a limit as the $\infty$-category of $\Gamma$-equivariant objects in a $\infty$-category. As the equivalence $QC(X) \cong QC(Y)$ respects the $\Gamma$-action, we obtain that the $\infty$-categories of $\Gamma$-equivariant objects in $QC(X)$ and $QC(Y)$ must be equivalent. In particular we have $$QC([X/\Gamma]) \cong QC([Y/\Gamma]).$$
\end{proof}

Even more generally, for an $\infty$-groupoid $G$, which is pointed and connected, and an $\infty$-functor $\mathsf{act}: G \to \infty-\mathrm{Cat},$ we should think of the homotopy limit $\C^{\Gamma} := \lim_{G} \mathsf{act}$ as an $\infty$-category of $G$-equivariant objects in an $\infty$-category $\C$. If $\C$ is stable (in particular its homotopy category is triangulated) then so is $\C^{\Gamma}$ according to Theorem 5.4 in \cite{Lurie:qf}. In \cite{Sosna:2011uq} an alternative \emph{linearization} procedure is described for triangulated categories having a strongly pre-triangulated dg-model. Using this definition of linearization P. Sosna also obtains an analogue of Lemma \ref{lemma:equivariant_stable} in \emph{loc. cit.}

\subsection{Fourier-Mukai transform}

Let $\X$, $\Y$ and $\Z$ be algebraic stacks, which we assume to be quasi-compact and having affine diagonal. Let $\X \to \Z$ and $\Y \to \Z$ be morphisms of stacks and $K \in D_{qcoh}(\X \times_{\Z} \Y)$ a complex on the fibre product $\X \times_{\Z} \Y$. If we denote by $p_{\X}: \X \times_{\Z} \Y \to \X$ the canonical projection, and similarly for $p_{\Y}$, we obtain an exact functor $$\Phi_{K}: D_{qcoh}(\X) \to D_{qcoh}(\Y),$$ which sends the complex of sheaves $F \in D_{qcoh}(\X)$ to $$\Phi_{K}(F) := Rp_{\Y,*}(Lp_{\X}^*F \otimes^L K).$$ Functor between derived categories of this type are referred to as (generalized) Fourier-Mukai transforms and were introduced by Mukai (\cite{MR946249}). The following statement is proved as in \cite{MR946249}, but using a slightly more general base change formula (e.g. \cite[Prop. 3.10]{MR2669705}). The proof can also be extracted from the proof of Lemma \ref{lemma:products}.

\begin{lemma}\label{lemma:composition}
Let $\X$, $\Y$ and $\Z$ be $\W$-stacks. We assume that all of these stacks are algebraic, quasi-compact and have affine diagonal; moreover we require that $\X \to \W$, $\Y \to \W$ and $\Z \to \W$ are representable flat morphisms. For $L \in D_{qcoh}(\X \times_{\W} \Y)$ and $K \in D_{qcoh}(\Y \times_{\W} \Z)$ we define $$L * K := Rp_{\X\Z,*}(Lp_{\X\Y}^*L \otimes^L Lp_{\Y\Z}^*K).$$ There exists a natural equivalence between the functors $\Phi_{K} \circ \Phi_L$ and $\Phi_{L * K}$.
\end{lemma}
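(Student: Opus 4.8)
The plan is to prove the asserted natural equivalence $\Phi_K \circ \Phi_L \cong \Phi_{L*K}$ by expanding both sides into their defining diagram of pullbacks, pushforwards, and tensor products, and then reducing the identity to a sequence of standard manipulations: the projection formula, associativity and commutativity of the derived tensor product, and a flat base change isomorphism along an appropriate Cartesian square. First I would set up the relevant fibre products over $\W$. The key geometric object is the triple fibre product $\X \times_\W \Y \times_\W \Z$, equipped with its three pairwise projections $p_{\X\Y}$, $p_{\Y\Z}$, $p_{\X\Z}$ onto the two-fold products, and the three single projections $p_\X$, $p_\Y$, $p_\Z$. The definition of $L*K$ is precisely the pushforward along $p_{\X\Z}$ of $Lp_{\X\Y}^* L \otimes^L Lp_{\Y\Z}^* K$, so $\Phi_{L*K}(F)$ unwinds to a pushforward over the triple product of the single projection to $\X$ pulled back and tensored against this kernel.

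Next I would expand the iterated transform. By definition,
\begin{equation}
\Phi_K(\Phi_L(F)) = Rp_{\Z,*}\bigl(Lp_\Y^*\, Rp_{\Y,*}(Lp_\X^* F \otimes^L L)\,\otimes^L K\bigr),
\end{equation}
where the inner and outer projections live on $\X\times_\W\Y$ and $\Y\times_\W\Z$ respectively. The crucial step is to rewrite the inner composite $Lp_\Y^* Rp_{\Y,*}$ using flat base change along the Cartesian square formed by the two projections from the triple product $\X\times_\W\Y\times_\W\Z$ down to $\X\times_\W\Y$ and $\Y\times_\W\Z$ over the common factor $\Y$. This is exactly where the hypothesis that $\X \to \W$, $\Y \to \W$, $\Z \to \W$ are representable and flat is used: it guarantees the square is Tor-independent, so the base change morphism is an isomorphism; this is the ``slightly more general base change formula'' alluded to in the statement (\cite[Prop. 3.10]{MR2669705}). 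After applying base change, I would use the projection formula to pull the external factor $K$ (and likewise $L$) inside the pushforward, and then repeatedly invoke associativity of $\otimes^L$ to assemble the integrand on the triple product into the single object $Lp_\X^* F \otimes^L Lp_{\X\Y}^* L \otimes^L Lp_{\Y\Z}^* K$. Finally, since $p_\Z = p_{\X\Z} \circ (\text{projection to }\X\Z)$ and the Leray–type identity $Rp_{\Z,*} = R(p_{\X\Z})_* \circ R(\text{proj})_*$ holds, collecting the pushforwards reproduces exactly $\Phi_{L*K}(F)$.

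The main obstacle, and the only genuinely delicate point, is verifying that the base change morphism is an honest isomorphism rather than merely a map, which forces careful attention to the Tor-independence of the relevant square and to the fact that we are working with potentially unbounded complexes on algebraic stacks rather than schemes. This is why the flatness of the structure morphisms to $\W$ is hypothesized, and why one cites a base change result valid in this generality. A secondary, more bookkeeping-intensive task is to promote the chain of isomorphisms to a genuinely \emph{natural} equivalence of functors; in the $\infty$-categorical language of $QC(-)$ this naturality is automatic once the constituent functors (pullback, pushforward, tensor) are realized as $\infty$-functors and the base change and projection-formula equivalences are coherent, which is precisely the payoff of the enhancement discussed earlier. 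As the statement indicates, one may alternatively extract this argument from the proof of Lemma \ref{lemma:products}, and I would cross-reference that to avoid duplicating the coherence verification.
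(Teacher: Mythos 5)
Your proposal is correct and follows essentially the same route as the paper: the paper gives no standalone proof but explicitly defers to Mukai's classical composition argument upgraded with the base change formula of \cite[Prop.\ 3.10]{MR2669705} (and to the diagram chase in the proof of Lemma \ref{lemma:products}), which is precisely the base-change--projection-formula computation on the triple fibre product $\X \times_{\W} \Y \times_{\W} \Z$ that you carry out. The only cosmetic remark is that in the final step one applies the projection formula once more along $p_{\X\Z}$ to extract the factor pulled back from $\X \times_{\W} \Z$, a detail your ``collecting the pushforwards'' phrase glosses over but which is routine.
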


As we are mainly dealing with generalized Fourier-Mukai functors, i.e. integral kernels living on a fibre product $\X \times_{\Z} \Y$, we have to investigate how the kernel changes if we replace the base $\Z$ along a morphism $\Z \to \W$. The behaviour of integral kernels under this change of base stack is expressed in the well-known lemma below, which is proved by application of the projection formula (e.g. \cite[Prop. 3.10]{MR2669705}).

\begin{lemma}\label{lemma:changeofbase}
Let $\X$, $\Y$, $\Z$ and $\W$ be algebraic stacks which are quasi-compact and have affine diagonal. We assume that $\X$ and $\Y$ are $\Z$-stacks, and that there is a schematic morphism $\Z \to \W$. Let $$f := (p_{\X},p_{\Y}): \X \times_{\Z} \Y \to \X \times_{\W} \Y$$ be the canonical morphism, and $K \in D_{qcoh}(\X \times_{\Z} \Y)$. Then the Fourier-Mukai transform $\Phi_K$ is naturally equivalent to $\Phi_{Rf_*K}$.
\end{lemma}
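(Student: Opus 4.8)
The plan is to reduce the statement to a single application of the projection formula for the morphism $f$, using only the functoriality of derived push-forward and pull-back. Write $p_{\X}, p_{\Y}$ for the projections out of $\X \times_{\Z} \Y$ and $q_{\X}, q_{\Y}$ for those out of $\X \times_{\W} \Y$, so that by the very construction of $f = (p_{\X},p_{\Y})$ one has the identities $q_{\X} \circ f = p_{\X}$ and $q_{\Y} \circ f = p_{\Y}$. The two Fourier--Mukai functors to be compared are $\Phi_K(F) = Rp_{\Y,*}(Lp_{\X}^*F \otimes^L K)$ and $\Phi_{Rf_*K}(F) = Rq_{\Y,*}(Lq_{\X}^*F \otimes^L Rf_*K)$, both viewed as functors $D_{qcoh}(\X) \to D_{qcoh}(\Y)$.

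First I would rewrite the source functor purely in terms of the maps leaving $\X \times_{\W} \Y$. Functoriality of the derived pull-back along $p_{\X} = q_{\X} \circ f$ gives $Lp_{\X}^* F \cong Lf^* Lq_{\X}^* F$, and functoriality of the derived push-forward along $p_{\Y} = q_{\Y} \circ f$ gives $Rp_{\Y,*} \cong Rq_{\Y,*} \circ Rf_*$; the latter is legitimate because all stacks are quasi-compact with affine diagonal, so the morphisms in play are concentrated and the composite of derived direct images computes the derived direct image of the composite. Substituting these identifications into $\Phi_K$ and then invoking the projection formula for $f$ yields the chain
\begin{align*}
\Phi_K(F) &= Rq_{\Y,*}Rf_*\bigl(Lf^*Lq_{\X}^*F \otimes^L K\bigr) \\
&\cong Rq_{\Y,*}\bigl(Lq_{\X}^*F \otimes^L Rf_*K\bigr) = \Phi_{Rf_*K}(F).
\end{align*}
Checking that each of these steps is natural in $F$ then upgrades the pointwise isomorphism to the asserted natural equivalence of functors.

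The hard part is not the formal manipulation but verifying that the projection formula genuinely applies to $f$ in the present stacky generality. The key observation is that $f$ arises as a base change of the diagonal $\Z \to \Z \times_{\W} \Z$ along the canonical morphism $\X \times_{\W} \Y \to \Z \times_{\W} \Z$ induced by $\X \to \Z$ and $\Y \to \Z$; indeed one checks directly that $\X \times_{\Z} \Y \cong (\X \times_{\W} \Y) \times_{\Z \times_{\W} \Z} \Z$. Since $\Z \to \W$ is assumed schematic, this diagonal, and hence $f$, is representable. This places us exactly in the hypotheses of the generalized projection formula of \cite[Prop. 3.10]{MR2669705}, which is available for such concentrated morphisms. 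Once the representability of $f$ has been recorded, the projection formula and the commutation of $Lf^*$ and $Rf_*$ with the relevant functors are immediate, and no assumptions beyond quasi-compactness and affineness of the diagonals are required.
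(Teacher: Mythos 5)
Your proof is correct and takes essentially the same route as the paper: the paper disposes of this lemma in one line, saying it "is proved by application of the projection formula (e.g. \cite[Prop. 3.10]{MR2669705})", which is precisely the computation you perform after rewriting $p_{\X} = q_{\X} \circ f$ and $p_{\Y} = q_{\Y} \circ f$ and pushing $Rf_*$ through the tensor product. Your extra step identifying $f$ as a base change of the relative diagonal $\Delta_{\Z/\W}$ via the cartesian square $\X \times_{\Z} \Y \cong (\X \times_{\W} \Y) \times_{\Z \times_{\W} \Z} \Z$, so that $f$ is representable and the projection formula applies, is a correct justification of a point the paper leaves implicit.
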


We will also have to understand the behaviour of Fourier-Mukai transform with respect to base change. 

\begin{lemma}\label{lemma:basechange}
Let $\X$, $\Y$, $\Z$ and $\W$ be perfect algebraic stacks which are quasi-compact and have affine diagonal. We assume that $\X$ and $\Y$ are flat $\Z$-stacks, and that there is a schematic morphism $\pi: \W \to \Z$. Every Fourier-Mukai equivalence $$\Phi_K: D_{qcoh}(\X) \cong D_{qcoh}(\Y)$$ relative to $\Z$ induces a Fourier-Mukai equivalence relative to $\W$ $$\Phi_{\pi^*K}: D_{qcoh}(\X \times_{\Z} \W) \cong D_{qcoh}(\Y \times_{\Z}\W)$$ by pulling back the kernel $K$.
\end{lemma}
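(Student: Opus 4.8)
The plan is to exhibit the inverse of $\Phi_{\pi^*K}$ explicitly as the Fourier--Mukai transform along the pulled-back inverse kernel, reducing the whole statement to the compatibility of convolution with base change. First I would make the ambient spaces explicit: since $\X$ and $\Y$ are $\Z$-stacks, there is a canonical identification
$$(\X \times_{\Z} \W) \times_{\W} (\Y \times_{\Z} \W) \cong (\X \times_{\Z} \Y) \times_{\Z} \W,$$
under which $\pi^*K$ is literally the derived pullback $Lg^* K$ of $K$ along the projection $g \colon (\X \times_{\Z} \Y) \times_{\Z} \W \to \X \times_{\Z} \Y$; this is the kernel defining $\Phi_{\pi^*K}$.

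Next, because $\X,\Y,\Z,\W$ are perfect stacks, every cocontinuous functor between their quasi-coherent derived categories is of Fourier--Mukai type, so the quasi-inverse of $\Phi_K$ is again such a transform, say $\Phi_{K'}$ with $K' \in D_{qcoh}(\Y \times_{\Z} \X)$. Applying Lemma \ref{lemma:composition} over the base $\Z$ — legitimate since $\X \to \Z$ and $\Y \to \Z$ are flat — the relations $\Phi_{K'} \circ \Phi_K \cong \id$ and $\Phi_K \circ \Phi_{K'} \cong \id$ translate into the convolution identities $K * K' \cong \delta_{\X}$ and $K' * K \cong \delta_{\Y}$, where $\delta_{\X} = R\Delta_{\X,*}\Oo_{\X}$ is the structure sheaf of the relative diagonal, i.e. the kernel of the identity functor.

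The heart of the argument is the compatibility of convolution with base change, namely $(\pi^* L) * (\pi^* K) \cong \pi^*(L * K)$ for kernels $L,K$ over $\Z$. I would prove this by unwinding the definition of $*$ on the relevant triple fibre product and applying the base-change and projection formulae (e.g. \cite[Prop. 3.10]{MR2669705}): the two outer pullbacks combine and commute past the derived tensor product, and then base change in the cartesian square formed by the contraction projection — which is flat because the omitted middle factor is flat over $\Z$ — lets the pushforward pass through $g^*$. The same base-change input also yields $\pi^*\delta_{\X} \cong \delta_{\X \times_{\Z} \W}$, since the relative diagonal of $\X \times_\Z \W$ over $\W$ is the base change of $\Delta_\X$. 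The main obstacle is precisely checking that the relevant squares are Tor-independent (or possess a flat leg) so that the base-change formula is available; the flatness hypotheses on the $\Z$-stacks are exactly what supplies this.

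Combining these, $(\pi^*K)*(\pi^*K') \cong \pi^*(K*K') \cong \pi^*\delta_\X \cong \delta_{\X\times_\Z\W}$, and symmetrically $(\pi^*K')*(\pi^*K) \cong \delta_{\Y\times_\Z\W}$. Since $\X \times_\Z \W \to \W$ and $\Y \times_\Z \W \to \W$ are flat as base changes of flat maps, Lemma \ref{lemma:composition} applies over $\W$ and converts these identities back into $\Phi_{\pi^*K'}\circ\Phi_{\pi^*K} \cong \id$ and $\Phi_{\pi^*K}\circ\Phi_{\pi^*K'}\cong\id$, so $\Phi_{\pi^*K}$ is an equivalence relative to $\W$. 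Conceptually this is the statement that for perfect stacks $QC(\X\times_\Z\W) \cong QC(\X)\otimes_{QC(\Z)}QC(\W)$ and that $\Phi_{\pi^*K}$ is $\Phi_K \otimes_{QC(\Z)}\id_{QC(\W)}$, which manifestly remains an equivalence; I would include this tensor-product reformulation as an independent, shorter verification.
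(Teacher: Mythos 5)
Your proof is correct and takes essentially the same route as the paper's: the paper's own (two-line) proof likewise takes the inverse kernel $L$ satisfying $K*L \cong \Delta_*\Oo_{\X}$ and $L*K \cong \Delta_*\Oo_{\Y}$ and invokes the base change formula to conclude that the same convolution identities hold over $\W$. Your write-up simply makes explicit what the paper leaves implicit — the existence of the inverse kernel via perfectness of the stacks, the flatness/Tor-independence of the relevant cartesian squares needed for derived base change, and the identification $\pi^*\Delta_*\Oo_{\X} \cong \Delta_*\Oo_{\X\times_{\Z}\W}$ — and adds an optional reformulation via $QC(\X)\otimes_{QC(\Z)}QC(\W)$.
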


\begin{proof}
Let $L \in D_{qcoh}(\X \times_{\Z} \Y)$ so that $K * L \cong \Delta_*\Oo_{\X}$ and $L * K \cong \Delta_*\Oo_{\Y}$. The base change formula implies that the same relations hold for $\X \times_{\Z} \W$ and $\Y \times_{\Z} \W$.
\end{proof}

The next lemma tells us that if $X_i$ and $Y_i$ are Fourier-Mukai partners for $i = 1,2$, then then products $X_1 \times X_2$ and $Y_1 \times Y_2$ are Fourier-Mukai partners. Using the formalism of stable $\infty$-categories, this is simply a consequence of Theorem 1.2 in \cite{MR2669705}.

\begin{lemma}\label{lemma:products}
For $i = 1,2$ let $\X_i$, $\Y_i$, $\mathcal{Z}_i$ and $\W$ be perfect algebraic stacks which are quasi-compact and have affine diagonal. We assume that $\X_i$, $\Y_i$, $\mathcal{Z}_i$ are $\W$-stacks and that the structural morphisms are flat. Let $D_{qcoh}(\X_i)\cong D_{qcoh}(\mathcal{Y}_i)$  be derived equivalences of Fourier-Mukai type, induced by integral kernels $K_i \in D_{qcoh}(\X_i \times_{\Z_i} \mathcal{Y}_i)$. Then $K_1 \boxtimes^L K_2$ induces a derived equivalence $$D_{qcoh}(\X_1 \times_{\W} \X_2) \cong D_{qcoh}(\mathcal{Y}_1 \times_{\W} \mathcal{Y}_2),$$ relative to $\Z_1 \times_{\mathcal{W}} \Z_2$.
\end{lemma}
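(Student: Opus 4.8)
The plan is to transport the whole statement into the symmetric monoidal $\infty$-category of $QC(\W)$-linear presentable stable categories, where it reduces to the elementary fact that a tensor product of equivalences is again an equivalence; all the remaining content is bookkeeping of integral kernels. Since every stack in sight is perfect, quasi-compact and has affine diagonal, I may freely use the two consequences of \cite[Thm. 1.2]{MR2669705}: first, the tensor formula
$$QC(\X_1 \times_{\W} \X_2) \cong QC(\X_1) \otimes_{QC(\W)} QC(\X_2)$$
(and likewise for the $\Y_i$ and for the kernel stacks below); and second, the classification of $QC(\W)$-linear colimit-preserving functors by integral kernels, $$\operatorname{Fun}^L_{QC(\W)}(QC(\X_i),QC(\Y_i)) \cong QC(\X_i \times_{\W} \Y_i),$$ under which a kernel is sent to its integral transform. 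Taken together these say that $K \mapsto \Phi_K$ is not only an equivalence of categories but is compatible with the box product of kernels and the relative tensor product of functors.

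Next I would record the purely geometric rearrangement of fibre products
$$(\X_1 \times_{\W} \X_2) \times_{\Z_1 \times_{\W} \Z_2} (\Y_1 \times_{\W} \Y_2) \cong (\X_1 \times_{\Z_1} \Y_1) \times_{\W} (\X_2 \times_{\Z_2} \Y_2),$$
which identifies the natural home of a Fourier--Mukai kernel relative to $\Z_1 \times_{\W} \Z_2$ with the $\W$-fibre product of the homes of $K_1$ and $K_2$. In particular $K_1 \boxtimes^L K_2$, defined as the derived tensor product of the two pullbacks to this fibre product, is canonically an object of the left-hand side, so it does define an integral transform relative to $\Z_1 \times_{\W} \Z_2$; this already matches the shape asserted in the lemma.

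With these preliminaries in place the argument is short. Each $\Phi_{K_i}$ is by hypothesis a $QC(\Z_i)$-linear equivalence, hence a $QC(\W)$-linear equivalence by restriction of scalars along the map $QC(\W) \to QC(\Z_i)$ induced by $\Z_i \to \W$. Tensoring the two over $QC(\W)$ gives
$$\Phi_{K_1} \otimes_{QC(\W)} \Phi_{K_2} \colon QC(\X_1) \otimes_{QC(\W)} QC(\X_2) \xrightarrow{\ \sim\ } QC(\Y_1) \otimes_{QC(\W)} QC(\Y_2),$$
an equivalence since the relative tensor product of two equivalences is again an equivalence. Applying the tensor formula of the first step to both sides rewrites this as an equivalence $QC(\X_1 \times_{\W} \X_2) \cong QC(\Y_1 \times_{\W} \Y_2)$.

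The one point demanding care, and the main obstacle, is to identify the kernel of this tensored functor with $K_1 \boxtimes^L K_2$. This is precisely the compatibility of the kernel--functor dictionary with monoidal products: under $\operatorname{Fun}^L_{QC(\W)}(-,-) \cong QC(- \times_{\W} -)$, the relative tensor $\Phi_{K_1} \otimes_{QC(\W)} \Phi_{K_2}$ must correspond to the box product of the kernels $K_1$ and $K_2$ living on the rearranged fibre product of the previous paragraph. Making this precise amounts to tracking the symmetric monoidal structure of the integral-transform equivalence of \cite{MR2669705} through the rearrangement isomorphism; once this is done the kernel is visibly $K_1 \boxtimes^L K_2$, and, being supported on $(\X_1 \times_{\Z_1} \Y_1) \times_{\W} (\X_2 \times_{\Z_2} \Y_2)$, its transform is relative to $\Z_1 \times_{\W} \Z_2$, as claimed.
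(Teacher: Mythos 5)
Your proposal is correct and would prove the lemma, but it follows a genuinely different route from the paper's own proof --- in fact it is precisely the route the paper mentions and then declines to take: the sentence introducing the lemma says that, using the formalism of stable $\infty$-categories, the statement is ``simply a consequence of Theorem 1.2 in \cite{MR2669705}'', and your argument is an expansion of that remark. The paper's actual proof stays in the classical language: it applies Lemma \ref{lemma:basechange} twice, base-changing $\Phi_{K_1}$ along $\X_2 \to \W$ and $\Phi_{K_2}$ along $\Y_1 \to \W$, composes the two resulting equivalences to obtain $D_{qcoh}(\X_1 \times_{\W} \X_2) \cong D_{qcoh}(\Y_1 \times_{\W} \Y_2)$, and then identifies the kernel of the composite explicitly, chasing a ladder of cartesian squares with the base-change and projection formulas until $K_1 \boxtimes^L K_2$ appears. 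Your version buys brevity and makes the relativity over $\Z_1 \times_{\W} \Z_2$ transparent via the fibre-product rearrangement, but it leans on compatibilities that a complete write-up would have to substantiate rather than assert: (i) the crux you yourself flag, namely that under the dictionary $\mathrm{Fun}^L_{QC(\W)}(QC(\X),QC(\Y)) \cong QC(\X \times_{\W} \Y)$ the relative tensor product $\Phi_{K_1} \otimes_{QC(\W)} \Phi_{K_2}$ corresponds to the kernel $K_1 \boxtimes^L K_2$ --- this monoidal functoriality of the kernel--functor correspondence needs a citation or proof, not just the bare statement of \cite[Thm.~1.2]{MR2669705}; (ii) since \cite{MR2669705} works with derived fibre products, the flatness hypotheses of the lemma (which your argument never invokes) are exactly what guarantee that the ordinary fibre products in the statement agree with the derived ones; and (iii) upgrading the hypothesis from an equivalence of triangulated categories $D_{qcoh}(\X_i) \cong D_{qcoh}(\Y_i)$ to an equivalence of the stable $\infty$-categories $QC(\X_i) \cong QC(\Y_i)$ is valid because equivalences of stable $\infty$-categories are detected on homotopy categories, but this step deserves a sentence. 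By contrast, the paper's more computational proof uses only the base-change and projection formulas it has already established and produces the kernel identification by direct calculation.
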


\begin{proof}
According to Lemma \ref{lemma:basechange} we know that the equivalences $D_{qcoh}(\X_i) \cong D_{qcoh}(\Y_i)$ induce equivalences $$D_{qcoh}(\X_1 \times_{\W} \Y_1) \cong D_{qcoh}(\X_2 \times_{\W} \Y_1)$$ and $$D_{qcoh}(\X_2 \times_{\W} \Y_1) \cong D_{qcoh}(X_2 \times_{\W} Y_2).$$ By juxtaposition we obtain a derived equivalence $$D_{qcoh}(\X_1 \times_{\W} \Y_1) \cong D_{qcoh}(\X_2 \times_{\W} \Y_2).$$ In order to obtain a better understanding of the integral kernel of this composition we take a look at the following commutative diagram with cartesian squares:
\begin{center}
\begin{tikzpicture}

\node at (0,1) {$\X_1 \times_{\W} \X_2$};
\node at (2.5,0) {$\Z_1 \times_{\W} \X_2$};
\node at (5,1) {$\Y_1 \times_{\W} \X_2$};
\node at (2.5,2) {$(\X_1 \times_{\Z_1}\Y_1) \times_{\W} \X_2$};

\node at (7.5,0) {$\Y_1 \times_{\W} \Z_2$};
\node at (10,1) {$\Y_1 \times_{\W} \Y_2$};
\node at (7.5,2) {$\Y_1 \times_{\W} (\X_2 \times_{\Z_2}\Y_2)$};

\node at (5,3) {$(\X_1 \times_{\W} \X_2) \times_{(\Z_1 \times_{\W} \Z_2)} (\Y_1 \times_{\W} \Y_2)$};

\node at (0.9,1.7) {$p$};
\node at (4.1,1.7) {$q$};
\node at (5.9,1.7) {$r$};
\node at (9.1,1.7) {$s$};

\node at (3.4,2.7) {$\alpha$};
\node at (6.6,2.7) {$\beta$};

\draw [->,thick] (0.3,0.7)--(2.2,0.3);
\draw [->,thick] (4.7,0.7)--(2.8,0.3);
\draw [->,thick] (5.3,0.7)--(7.2,0.3);
\draw [->,thick] (9.7,0.7)--(7.8,0.3);

\draw [->,thick] (2.2,1.7)--(0.3,1.3);
\draw [->,thick] (2.8,1.7) -- (4.7,1.3);
\draw [->,thick] (7.2,1.7)--(5.3,1.3);
\draw [->,thick] (7.8,1.7)--(9.7,1.3);

\draw [->,thick] (4.7,2.7)--(2.8,2.3);
\draw [->,thick] (5.3,2.7)--(7.2,2.3);
\end{tikzpicture}
\end{center}
Let $M \in D_{qcoh}(\X_1 \times \Y_1)$, we denote by $$c: (\X_1 \times_{\Z_1}\Y_1) \times \X_2 \to \X_1 \times_{\Z_1}\Y_1$$ and $$d: \Y_1 \times (\X_2 \times_{\Z_2}\Y_2) \to \X_2 \times_{\Z_2}\Y_2$$ the canonical projection; the base change formula reveals now that $$Rs_*((Lr^*Rq_*(Lp^*\otimes^L Lc^*K_1))\otimes^L Ld^*K_2) \cong Rs_*(R\beta_*(L\alpha^*Lp^*M \otimes^L K_1) \otimes^L Ld^*K_2).$$ Using the projection formula we obtain $$Rs_*R\beta_*(L\alpha^*Lp^*M \otimes L\alpha^*Lc^*K_1 \otimes^L L\beta^*Ld^*K_2).$$ In particular we see that the integral kernel is given by $K_1 \boxtimes^L K_2$.
\end{proof}

The lemma below is reminiscent from Lemma \ref{lemma:equivariant_stable}; it is formulated and proved in a more classical language, but using more restrictive assumptions.

\begin{lemma}\label{lemma:equivariant}
Let $X$, $Y$ and $Z$ be quasi-projective smooth $k$-varieties, proper and flat over $Z$, endowed with the action of an abstract finite group $\Gamma$, such that the characteristic of $k$ does not divide $\Gamma$; we assume that there is a functor of Fourier-Mukai type $$\Phi_K: D^b_{coh}(X) \to D^b_{coh}(Y),$$ given by an integral kernel $K \in D_{qcoh}(X \times_Z Y)$ of finite Tor-dimension. Moreover we assume that $K$ is endowed with a $\Gamma$-equivariant structure; in the sense that $K \cong f^*L$ for $$L \in D^b_{coh}([X/\Gamma] \times_{[Z/\Gamma]} [Y/\Gamma]),$$ and the obvious map $$f: X \times_Z Y \to [X/\Gamma] \times_{[Z/\Gamma]} [Y/\Gamma].$$ Then $\Phi_K$ is an equivalences of categories if and only if $\Phi_L$ is.
\end{lemma}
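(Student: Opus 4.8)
The plan is to reduce the assertion to the invertibility of the integral kernels $K$ and $L$ under convolution, and then to transport this invertibility along the cover $f$ in both directions. Write $\Z = [Z/\Gamma]$, $\X = [X/\Gamma]$, $\Y = [Y/\Gamma]$. The starting point is the canonical identification $\X \times_{\Z} \Y \cong [(X \times_Z Y)/\Gamma]$ for the diagonal $\Gamma$-action, under which the ``obvious map'' $f$ becomes the quotient atlas $X \times_Z Y \to [(X\times_Z Y)/\Gamma]$. By Assumption \ref{ass} the characteristic of $k$ does not divide $|\Gamma|$, so $f$ is a finite \'etale $\Gamma$-torsor; in particular it is faithfully flat, whence $Lf^* = f^*$ is conservative and reflects isomorphisms. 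I then record two compatibilities of $f^*$ with the convolution product $*$ of Lemma \ref{lemma:composition}. First, $f^*$ is monoidal: for kernels $L_1$ on $\X \times_{\Z}\Y$ and $L_2$ on $\Y \times_{\Z} \W$ one has $f^*(L_1 * L_2) \cong (f^*L_1) * (f^*L_2)$, which follows from the base-change and projection formulas underlying Lemmas \ref{lemma:changeofbase} and \ref{lemma:basechange} (see also \cite[Prop. 3.10]{MR2669705}) once one checks that the square relating the ``forget-the-middle-factor'' projections is cartesian — and it is, since a fibre product of quotient stacks is the quotient of the fibre product and $f$ is the corresponding quotient atlas. Second, and crucially, $f^*$ preserves the unit: the diagonal $\Delta_{\X}: \X \to \X \times_{\Z}\X$ is the closed immersion $[\Delta_X/\Gamma] \hookrightarrow [(X\times_Z X)/\Gamma]$, whose preimage under the quotient cover is exactly $\Delta_X$, so $f^*\Delta_{\X *}\Oo_{\X} \cong \Delta_{X *}\Oo_X$.

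Granting these compatibilities, the forward implication is immediate. Since $X$ and $Y$ are smooth and proper over $Z$ and the kernels have finite Tor-dimension, $\Phi_K$ (resp. $\Phi_L$) is an equivalence if and only if $K$ (resp. $L$) is invertible for $*$, i.e. admits a kernel $K'$ on $Y \times_Z X$ with $K * K' \cong \Delta_{X *}\Oo_X$ and $K' * K \cong \Delta_{Y *}\Oo_Y$ (and likewise for $L$ with the diagonals of $\X$ and $\Y$). If $\Phi_L$ is an equivalence with inverse kernel $L'$, then applying the monoidal, unit-preserving functor $f^*$ yields $K * (f^*L') \cong f^*(L * L') \cong f^*\Delta_{\X *}\Oo \cong \Delta_{X *}\Oo_X$, and symmetrically on the other side; hence $K = f^*L$ is invertible and $\Phi_K$ is an equivalence.

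For the converse I use faithfully flat descent, descending an \emph{isomorphism} rather than an object. When $\Phi_K$ is an equivalence its inverse is computed by the adjoint (Grothendieck--Verdier dual) kernel $K^R$, and because $f$ is \'etale the relative dualizing sheaf pulls back trivially, so that $f^*L^R \cong K^R$. The unit of the adjunction for $\Phi_L$ is a canonical morphism of kernels $\Delta_{\X *}\Oo_{\X} \to L * L^R$; applying $f^*$ and combining monoidality with $f^*\Delta_{\X *}\Oo \cong \Delta_{X *}\Oo_X$ and $f^*L^R \cong K^R$ identifies it with the corresponding unit $\Delta_{X *}\Oo_X \to K * K^R$ for $\Phi_K$, which is an isomorphism because $\Phi_K$ is an equivalence. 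As $f^*$ reflects isomorphisms, the unit for $\Phi_L$ is already an isomorphism, giving $L * L^R \cong \Delta_{\X *}\Oo$; running the same argument on the cover of $\Y \times_{\Z}\Y$ for the counit gives $L^R * L \cong \Delta_{\Y *}\Oo$. Hence $L$ is invertible and $\Phi_L$ is an equivalence.

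The main obstacle is the package of compatibilities of $f^*$ with convolution: the cartesianness underlying monoidality, the identification $f^*\Delta_{\X *}\Oo \cong \Delta_{X *}\Oo$, and the compatibility $f^*L^R \cong K^R$ of adjoint kernels. The middle point is the subtle one, since it is what makes the units for $\Phi_K$ and $\Phi_L$ correspond under $f^*$; a naive expectation that the diagonal of the quotient stack pulls back to the full groupoid kernel $\bigoplus_{\gamma \in \Gamma}\Oo_{\Gamma_\gamma}$ must be corrected — the diagonal is a closed immersion onto $[\Delta_X/\Gamma]$, so its preimage is the single component $\Delta_X$. The last point is exactly where the \'etale hypothesis enters, through $g_Y^*\omega_{\Y/\Z} \cong \omega_{Y/Z}$. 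Once these are in place the argument is formal.
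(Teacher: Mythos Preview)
Your argument is correct and complete, but it proceeds along a genuinely different axis from the paper's proof. You work entirely at the level of integral kernels: you establish that the quotient cover $f$ induces a monoidal functor for the convolution product $*$ that preserves the unit $\Delta_*\Oo$, and then you descend the invertibility of $K$ (witnessed by the unit/counit maps of the adjunction $\Phi_K \dashv \Phi_{K^R}$ being isomorphisms) to invertibility of $L$ using that $f^*$ is conservative. The paper instead works at the level of functors: it records the intertwining relation $f_Y^*\,\Phi_L \cong \Phi_K\, f_X^*$, deduces full faithfulness of $\Phi_L$ from that of $\Phi_K$ using that $f_X^*,f_Y^*$ are fully faithful, then shows the right adjoint $\Psi_L$ is conservative by the analogous intertwining, and finishes by invoking the criterion of \cite[Lemma~2.1]{Bridgeland:fk}. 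Your approach makes the monoidal structure on kernels do all the work and treats both implications symmetrically; the paper's approach is shorter for the harder direction because it outsources the endgame to the Bridgeland--King--Reid criterion, and the easy direction ($\Phi_L$ equivalence $\Rightarrow$ $\Phi_K$ equivalence) is in fact already Lemma~\ref{lemma:basechange} applied to the atlas $Z\to[Z/\Gamma]$. Your verification that $f^*\Delta_{\X,*}\Oo_{\X}\cong \Delta_{X,*}\Oo_X$ via the cartesian square over the \emph{relative} diagonal is the one genuinely delicate point, and you handle it correctly; the intertwining used in the paper sidesteps this computation entirely.
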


\begin{proof}
Let $W$ be a scheme with a $\Gamma$-action. We denote by $$f_W: W \to [W/\Gamma]$$ the canonical morphism to the quotient stack. All these maps are faithfully flat and therefore we do not have to distinguish between $f_W^*$ and $Lf_W^*$. For $L \in D^b_{coh}([X/\Gamma] \times_{[Z/\Gamma]} [Y/\Gamma])$ we denote the associated Fourier-Mukai transform by  $\Phi_{[X/\Gamma][Y/\Gamma]}$, and similarly will $\Phi_{[X/\Gamma]Y}$ denote the functor $D^b_{coh}([X/\Gamma]) \to D^b_{coh}(Y)$ associated to the kernel obtained from $L$ via pullback along the obvious map, and so on.

The base change formula implies $$f_Y^*\Phi_{[X/\Gamma][Y/\Gamma]} \cong \Phi_{[X/\Gamma]Y} \cong \Phi_{XY}f_X^*.$$ Since $f_Y$ and $f_X$ are faithfully flat, we conclude that $f_Y^*$ and $f_X^*$ are fully faithful functors. Together with $\Phi_{XY}$ being fully faithful we conclude that $\Phi_{[X/\Gamma][Y/\Gamma]}$ is fully faithful.

The functor $\Phi_{[X/\Gamma][Y/\Gamma]}$ has a right adjoint $\Psi_{[X/\Gamma][Y/\Gamma]}$ given by the integral kernel $L^{\dual}\otimes \omega$ on $[Y/\Gamma]\times_{[Z/\Gamma]}[X/\Gamma]$, where $\omega$ is a factor derived from a relative dualizing line bundle. This is seen by using the standard adjunctions together with Grothendieck-Serre duality. Replacing $\Phi$ by $\Psi$ above, we see that $$f_X^*\Psi_{[Y/\Gamma][X/\Gamma]}(F) \cong \Psi_{YX}(f_Y^*F),$$ for $F \in D^b_{coh}([Y/\Gamma])$. Since $\Psi_{YX}$ is a quasi-inverse to $\Phi_{XY}$, we obtain from this relation that $\Psi_{[Y/\Gamma][X/\Gamma]}(F) = 0$ if and only if $F = 0$, again by virtue of the fact that $f_X$ and $f_Y$ are faithfully flat. Lemma 2.1 of \cite{Bridgeland:fk} implies now that $\Phi_{[X/\Gamma][Y/\Gamma]}$ is an equivalence of categories.
\end{proof}

We emphasize that every functor of Fourier-Mukai type $D_{qcoh}(\X) \to D_{qcoh}(\Y)$ lifts to an $\infty$-functor $QC(\X) \to QC(\Y)$. And for a large class of stacks every functor between $\infty$-categories $QC(\X) \to QC(\Y)$ is obtained from a Fourier-Mukai transform (\cite[Thm. 1.2(2)]{MR2669705}). For our purposes it is therefore merely a matter of taste whether one utilizes the theory of stable $\infty$-categories or derived categories and functors of Fourier-Mukai type.

\subsection{The McKay correspondence}

In the paper \cite{Bridgeland:fk} an important special case of the Fourier-Mukai transform has been considered to establish a form of the derived McKay correspondence. We denote by $X$ a smooth quasi-projective variety with an abstract finite group $\Gamma$ acting on it. Moreover we assume that the characteristic of $k$ is zero or $p > |\Gamma|$. Let us denote by $Y \subset \mathrm{Hilb}^{|\Gamma|} [X/\Gamma]$ the scheme representing the functor given by $\Gamma$-equivariant subschemes $$Z \to X,$$ such that there exists a surjection of $\Gamma$-equivariant sheaves $\Oo_{X} \twoheadrightarrow \Oo_Z$ and $\Gamma$ acts on $H^0(Z,\Oo_Z)$ as the regular representation. Moreover we remove redundant irreducible components, so that we are left with the irreducible component containing the free $\Gamma$-orbits.

The fibre product $Y \times [X/\Gamma]$ is endowed with the structure sheaf $\Oo_{\mathcal{Z}}$ of the universal $\Gamma$-cluster $\mathcal{Z}$.

\begin{thm}[{\cite[Thm. 1.1]{Bridgeland:fk}}]\label{thm:BKR}
We assume that the $\Gamma$-Hilbert scheme $Y$ of $X$ is smooth and satisfies the estimate $$\dim Y \times_{X/\Gamma} Y \leq \dim Y+1,$$ where $X/\Gamma$ denotes the GIT quotient. Then the structure sheaf of the universal family $\Oo_\mathcal{Z}$ of $\Gamma$-\emph{clusters} on $$Y \times [X/\Gamma]$$ induces an equivalence of $k$-linear derived categories of Fourier-Mukai type $$D^b_{coh}(Y) \cong D^b_{coh}([X/\Gamma]).$$
\end{thm}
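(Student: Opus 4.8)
The plan is to follow the strategy of Bridgeland--King--Reid and prove separately that the Fourier--Mukai functor $\Phi := \Phi_{\Oo_{\mathcal{Z}}} \colon D^b_{coh}(Y) \to D^b_{coh}([X/\Gamma])$, with kernel the structure sheaf of the universal cluster on $Y \times [X/\Gamma]$, is fully faithful and essentially surjective. First I would use the quotient-stack description of the preceding subsection to identify $D^b_{coh}([X/\Gamma])$ with the bounded equivariant derived category of $\Gamma$-equivariant coherent sheaves on $X$, so that $\Phi$ sends a skyscraper $\Oo_y$ at a closed point $y \in Y$ to the structure sheaf $\Oo_{Z_y}$ of the corresponding $\Gamma$-cluster $Z_y \subset X$, regarded equivariantly. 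Throughout, the left and right adjoints of $\Phi$ are given by the dual kernel twisted by the relative dualizing complex, exactly as in the proof of Lemma \ref{lemma:equivariant}; these exist since $Y$ and $X$ are smooth and the Hilbert--Chow morphism $Y \to X/\Gamma$ is proper.

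For full faithfulness I would invoke Bridgeland's orthonormal-spanning-class criterion. Since the skyscrapers $\Oo_y$ generate $D^b_{coh}(Y)$, it suffices to check that
$$\Hom(\Phi\Oo_{y_1}, \Phi\Oo_{y_2}[i])$$
vanishes unless $y_1 = y_2$ and $0 \le i \le \dim Y$, and is one-dimensional when $y_1 = y_2$ and $i = 0$. The vanishing for $i < 0$ and $i > \dim Y$ is automatic for a Fourier--Mukai transform between smooth varieties of dimension $\dim Y$. If the clusters $Z_{y_1}$ and $Z_{y_2}$ lie over distinct points of the coarse quotient $X/\Gamma$ their supports are disjoint, so all Hom-spaces vanish; the content is therefore entirely local over a single Hilbert--Chow fibre.

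The geometric heart, which I expect to be the main obstacle, is to control these Ext groups over one fibre of $Y \to X/\Gamma$. Here the hypothesis $\dim Y \times_{X/\Gamma} Y \le \dim Y + 1$ is decisive: it bounds the dimensions of the Hilbert--Chow fibres and hence of the incidence locus $Y \times_{X/\Gamma} Y \subset Y \times Y$ along which two clusters can meet. I would express the failure of the orthonormality conditions as cohomology of the derived sheaf-Hom of the kernel with itself supported on this fibre product, and then use the dimension estimate together with an intersection-theoretic vanishing (in the spirit of the New Intersection Theorem) to show that these cohomology sheaves are concentrated precisely where the criterion requires. This is the step where the special hypothesis on $Y$ is genuinely used, and where the combinatorics of clusters meeting over a common point of $X/\Gamma$ must be analysed carefully.

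Finally, for essential surjectivity I would exploit crepancy. In the situation at hand $\Gamma$ preserves a volume form, so the canonical sheaf of $X$ descends to a trivial canonical on $X/\Gamma$ and $Y \to X/\Gamma$ is a crepant resolution; consequently the canonical bundles on both sides match. Once full faithfulness is established, this matching forces the right adjoint of $\Phi$ to be conservative, and the equivalence then follows from the same criterion (Lemma 2.1 of \cite{Bridgeland:fk}) that was applied in the proof of Lemma \ref{lemma:equivariant}. Hence $\Phi$ is an equivalence of derived categories, as claimed.
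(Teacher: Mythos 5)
You are attempting a statement that this paper does not prove at all: Theorem \ref{thm:BKR} is quoted from Bridgeland--King--Reid (Theorem 1.1 of \cite{Bridgeland:fk}), with smoothness of $Y$ added as a hypothesis precisely so that the paper can sidestep the characteristic-$p$ subtleties in BKR's use of the New Intersection Theorem; the paragraph following the theorem in the paper says exactly this. So the only meaningful comparison is with BKR's own argument, and as an \emph{outline} your proposal tracks it correctly: adjoints via duality, reduction to orthogonality of the objects $\Phi\Oo_y$, disjointness of supports over distinct points of $X/\Gamma$, and the crepancy/Serre-functor argument to pass from full faithfulness to an equivalence via Lemma 2.1 of \cite{Bridgeland:fk}. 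The problem is that the step you yourself label ``the geometric heart'' is not one ingredient among several --- it \emph{is} the theorem. Writing that you would ``use the dimension estimate together with an intersection-theoretic vanishing (in the spirit of the New Intersection Theorem)'' names the destination, not a route. What BKR actually do is study the kernel $Q$ on $Y \times Y$ of the composite of $\Phi$ with its adjoint, observe that its cohomology sheaves are supported on $Y \times_{X/\Gamma} Y$, and then run a delicate local argument (bounded free resolutions, the intersection theorem applied at points of that support, and a Serre-duality symmetry of the fibrewise Ext groups) to force $Q \cong \Oo_{\Delta}$, equivalently to verify the orthogonality conditions; this occupies Sections 4--6 of their paper, and it is the only place the hypothesis $\dim Y \times_{X/\Gamma} Y \leq \dim Y + 1$ enters. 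Your proposal uses that hypothesis nowhere except in the sentence announcing that it should be used, so neither the one-dimensionality of $\Hom(\Phi\Oo_y,\Phi\Oo_y)$ nor the vanishing for distinct points in one Hilbert--Chow fibre, hence neither full faithfulness nor conservativity of the adjoint, has been established.

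A second, subtler point: your essential-surjectivity step invokes a hypothesis that is not in the statement you were given, namely that $\Gamma$ preserves a volume form, i.e.\ that $\omega_X$ is locally trivial as a $\Gamma$-equivariant sheaf. This is a standing assumption of \cite{Bridgeland:fk} which the paper's quotation silently drops, and it cannot be dispensed with. For example, let $\Gamma = \mathbb{Z}/2\mathbb{Z}$ act on $X = \mathbb{A}^2$ by $(x,y) \mapsto (-x,y)$: the $\Gamma$-Hilbert scheme $Y$ is smooth and the Hilbert--Chow map $Y \to X/\Gamma$ is an isomorphism, so the dimension estimate holds trivially, yet there is no equivalence $D^b_{coh}(Y) \cong D^b_{coh}([X/\Gamma])$, since $K_0(Y) \cong \mathbb{Z}$ while $K_0([X/\Gamma]) \cong \mathbb{Z}^2$. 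So you were right that crepancy is needed --- the statement is false without it --- but a complete write-up must say explicitly that this hypothesis is being imported from \cite{Bridgeland:fk} (it does hold in every application in this paper, because the actions on $T^*E$ and on its powers are symplectic), rather than treating it as something available from the stated assumptions.
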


In \cite{Bridgeland:fk} a slightly more general Theorem is proved for $k = \mathbb{C}$. The reason for this restriction is the use of the so-called \emph{New Intersection Theorem} due to Roberts (\cite{roberts}) and Peskine--Szpiro, which guarantees smoothness of $Y$. While this Theorem holds in positive characteristic, \cite{Bridgeland:fk} uses an addendum proved in \cite{bi} in the right generality. Nonetheless, in cases of interest to us, $Y$ will be already known to be smooth for different reasons.

In the next lemma we observe how this Fourier-Mukai transform interacts with a given morphism $X \to S$. This will be important for our analysis of autoduality of the Hitchin fibration in Theorem \ref{thm:toyautoduality} and $\ref{thm:autoduality}$.

\begin{lemma}\label{lemma:linear}
Let $\pi: X \to S$ be a flat morphism of smooth quasi-projective varieties, endowed with the action of a finite group $\Gamma$, such that $\pi$ is $\Gamma$-equivariant. If $X$ satisfies the conditions of Theorem \ref{thm:BKR}, $Y$ denotes the $\Gamma$-equivariant Hilbert scheme as before and $S/\Gamma$ the GIT quotient, then the natural equivalence of derived categories $$D^b_{coh}(Y) \cong D^b_{coh}([X/\Gamma])$$ is of Fourier-Mukai type relative to $S/\Gamma$.
\end{lemma}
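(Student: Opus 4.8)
The plan is to exhibit the Bridgeland--King--Reid kernel $\Oo_{\mathcal{Z}}$ as the pushforward of a kernel living on the fibre product $Y \times_{S/\Gamma} [X/\Gamma]$, and then to invoke Lemma \ref{lemma:changeofbase} to reinterpret the McKay equivalence of Theorem \ref{thm:BKR} as a Fourier--Mukai transform relative to $S/\Gamma$.

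First I would produce the two structure morphisms making $Y$ and $[X/\Gamma]$ into $S/\Gamma$-stacks. Since $\pi$ is $\Gamma$-equivariant it descends to a morphism $\bar\pi \colon X/\Gamma \to S/\Gamma$ of GIT quotients, and composing the coarse moduli map $[X/\Gamma] \to X/\Gamma$ with $\bar\pi$ equips $[X/\Gamma]$ with an $S/\Gamma$-structure. For $Y$ I would use the Hilbert--Chow morphism $h \colon Y \to X/\Gamma$, which sends a $\Gamma$-cluster to the point of $X/\Gamma$ supporting it, and declare the structure map to be $\bar\pi \circ h$.

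The heart of the argument is to show that the universal cluster $\mathcal{Z} \hookrightarrow Y \times [X/\Gamma]$ factors scheme-theoretically through the closed substack $Y \times_{S/\Gamma} [X/\Gamma]$. I would check this after passing to the $\Gamma$-equivariant universal cluster $\widetilde{\mathcal{Z}} \subset Y \times X$, whose quotient atlas $\widetilde{\mathcal{Z}} \to \mathcal{Z}$ is a $\Gamma$-torsor and hence faithfully flat (using $\mathrm{char}\, k = 0$ or $> |\Gamma|$). Writing $p \colon \widetilde{\mathcal{Z}} \to Y$ and $q \colon \widetilde{\mathcal{Z}} \to X$ for the projections and $c_X \colon X \to X/\Gamma$ for the quotient, the defining commutativity of the Hilbert--Chow square $c_X \circ q = h \circ p$ expresses that each $\Gamma$-cluster is supported on a single $\Gamma$-orbit. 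Composing with $\bar\pi$ and using the descent identity $\bar\pi \circ c_X = c_S \circ \pi$ (with $c_S \colon S \to S/\Gamma$), I find that the two composites $\widetilde{\mathcal{Z}} \rightrightarrows S/\Gamma$ induced by the two structure maps coincide. By faithfully flat descent along $\widetilde{\mathcal{Z}} \to \mathcal{Z}$ the same equality holds on $\mathcal{Z}$, so $\mathcal{Z} \to Y \times [X/\Gamma]$ equalises the two morphisms to $S/\Gamma$ and therefore factors through the fibre product. As $S/\Gamma$ is separated, the canonical map $f \colon Y \times_{S/\Gamma} [X/\Gamma] \hookrightarrow Y \times [X/\Gamma]$ is a closed immersion, and we obtain $\Oo_{\mathcal{Z}} = f_* K$ for the structure sheaf $K$ of $\mathcal{Z}$ regarded as a sheaf on the fibre product.

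Finally I would apply Lemma \ref{lemma:changeofbase} with intermediate base $S/\Gamma$ and final base $\Spec k$: since $Rf_* = f_*$ for the closed immersion $f$, the relative transform satisfies $\Phi_K \cong \Phi_{Rf_* K} = \Phi_{\Oo_{\mathcal{Z}}}$. The right-hand functor is the McKay equivalence of Theorem \ref{thm:BKR}, so the relative transform $\Phi_K$, whose kernel lives on $Y \times_{S/\Gamma} [X/\Gamma]$, is an equivalence; this is exactly the assertion that the correspondence is of Fourier--Mukai type relative to $S/\Gamma$. I expect the main obstacle to be the key step, namely upgrading the evident set-theoretic containment of $\mathcal{Z}$ in the fibre product to a scheme-theoretic factorisation; this rests on the commutativity of the Hilbert--Chow square, which is where the property that $\Gamma$-clusters are supported on single orbits enters.
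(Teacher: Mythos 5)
Your proposal is correct, but it reaches the key scheme-theoretic statement by a genuinely different route than the paper. The paper never mentions the Hilbert--Chow morphism in its proof: it reduces to compatible $\Gamma$-invariant affine charts (covering $S/\Gamma$ by affines $U_i$, pulling back to $S$, and using quasi-projectivity of $X$ to get invariant affine charts there), so that $X = \Spec A$, $S = \Spec D$, and a chart of $Y$ is a $C$-point given by a $\Gamma$-equivariant quotient $A \otimes C \twoheadrightarrow B$ with $B^{\Gamma} = C$. It then observes that the composite $D^{\Gamma} \to A \otimes C \to B$ lands in $B^{\Gamma} = C$, so the $D^{\Gamma}$-action on $B$ through $A$ agrees with the action through $C$, i.e.\ $B$ is an $A \otimes_{D^{\Gamma}} C$-module. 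This derives the scheme-theoretic factorisation of $\mathcal{Z}$ directly from the defining condition of the $\Gamma$-Hilbert functor. You instead outsource exactly this point to the commutativity of the Bridgeland--King--Reid square $c_X \circ q = h \circ p$, then descend along the torsor $\widetilde{\mathcal{Z}} \to \mathcal{Z}$ and invoke Lemma \ref{lemma:changeofbase} (which the paper uses only implicitly). Your route is more formal and in fact proves something stronger: the kernel is supported on $Y \times_{X/\Gamma} [X/\Gamma]$, i.e.\ the equivalence is relative to the finest base $X/\Gamma$, and relativity over any $S/\Gamma$ follows by composing with $X/\Gamma \to S/\Gamma$. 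The paper's route buys self-containedness: your justification of the square (``each cluster is supported on a single orbit'') is a priori only a statement about points, whereas the lemma needs the square to commute as a diagram of schemes. That stronger fact is citable from \cite{Bridgeland:fk}, where the diagram is asserted to commute; if you wanted to prove it rather than cite it, you would need either the paper's algebraic computation or an argument such as: $\widetilde{\mathcal{Z}}$ is finite flat over the smooth irreducible $Y$, hence Cohen--Macaulay, and generically \'etale over $Y$, hence reduced, so agreement of the two maps on the dense locus of free orbits together with separatedness of $X/\Gamma$ forces agreement everywhere. With that one citation made explicit, your proof is complete.
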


\begin{proof}
We only need to check that $\Oo_{\mathcal{Z}}$ is supported on the fibre product $$Y \times_{S/\Gamma} [X/\Gamma],$$ which one expects to be a consequence of the $\Gamma$-equivariance of $\pi$. To show this we may cover $S/\Gamma$ by Zariski open affine subsets $U_i$ and cover $S$ by the fibre products $$S_i := U_i \times_{S/\Gamma}S,$$ which are still affine, as $S \to S/\Gamma$ is finite. Using quasi-projectivity of $X$ we can cover $X\times_S S_i \subset X$ by Zariski open affine subsets $V_i$, which are $\Gamma$-invariant. Henceforth we may assume without loss of generality that $X = \Spec A$ and $S = \Spec D$ are affine varieties endowed with the action of an abstract group $\Gamma$.

Let now $C$ be another algebra endowed with the trivial $\Gamma$-action and $B$ a $C$-flat quotient of $A \otimes C$ sitting in a short exact sequence
\begin{center}
\begin{tikzpicture}

\node at (0,0) {$0$};
\node at (1,0) {$I$};
\node at (2.5,0) {$A \otimes C$};
\node at (4,0) {$B$};
\node at (5,0) {$0$};

\draw [->,thick] (0.3, 0) -- (0.7,0);
\draw [->,thick] (1.4, 0) --(1.8,0);
\draw [->,thick] (3.2,0) -- (3.6, 0);
\draw [->,thick] (4.3,0) -- (4.7,0);

\end{tikzpicture}
\end{center}
such that $I$ is a $\Gamma$-invariant ideal of $A \otimes C$. In particular this is a short exact sequence of $\Gamma$-modules. Moreover we assume that $C \to A \otimes C \to B$ induces an isomorphism $$C \cong B^{\Gamma}.$$ This algebraic data encodes a $C$-point of the $\Gamma$-Hilbert scheme of $X$. 

Since $A$ is a $D$-algebra we obtain a natural morphism $$D^{\Gamma} \to B^{\Gamma} = C,$$ endowing $C$ with the structure of a $D^{\Gamma}$-algebra. Therefore we see that on the $A \otimes C$-module $B$ the action of $D^{\Gamma}$ via $A$ agrees with the action via $C$. Thus $B$ is actually a $A \otimes_{D^{\Gamma}} C$-module, which is what we wanted to show.
\end{proof}

An important example is given by Hilbert schemes of surfaces. To see how those relate to equivariant Hilbert schemes we quote the following result of M. Haiman (\cite[Thm. 6]{MR1839919}), which is a corollary of his proof of the $n!$-conjecture.\footnote{In \cite{MR1839919} this Theorem is stated for $k=\mathbb{C}$. The proof translates without problems to the more general case of characteristic zero or $p>n$. This is due to the fact that the main technical ingredient of Haiman's proof, the Polygraph Theorem, is already established in the required generality.}

\begin{thm}[Haiman]
Let $X$ be a surface defined over a field of characteristic $p > n$ or zero. Let us denote by $X^{[n]}$ the Hilbert scheme of length $n$ subschemes and by $Y_n$ the $S_n$-Hilbert scheme of $X^n$ with respect to the natural group action of the symmetric group $S_n$ given by permuting factors. Then there is a natural isomorphism $$Y_n \cong X^{[n]}.$$  
\end{thm}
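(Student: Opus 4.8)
The plan is to realise both $X^{[n]}$ and $Y_n$ through Haiman's \emph{isospectral Hilbert scheme}. Write $X^{(n)} := X^n/S_n$ for the symmetric product and $h\colon X^{[n]} \to X^{(n)}$ for the Hilbert--Chow morphism, and define
\begin{equation*}
X_n := \bigl(X^{[n]} \times_{X^{(n)}} X^n\bigr)_{\mathrm{red}},
\end{equation*}
together with its projections $\rho\colon X_n \to X^{[n]}$ and $\sigma\colon X_n \to X^n$; the latter is $S_n$-equivariant for the permutation action on $X^n$ and the trivial action on $X^{[n]}$. Over the open locus $U \subset X^{(n)}$ of $n$ distinct points the scheme $X^{[n]}$ coincides with $X^{(n)}$, so $X_n|_U \cong X^n|_U$ is the étale $S_n$-torsor of orderings of the support; in particular $X_n$ is irreducible of dimension $2n$ and $\rho$ is generically a degree-$n!$ covering.

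Everything rests on the $n!$-theorem, which I would use in its geometric form: \emph{$X_n$ is Cohen--Macaulay (indeed Gorenstein), and hence $\rho$ is finite and flat of degree $n!$}. Granting this, $\rho_*\mathcal{O}_{X_n}$ is a rank-$n!$ vector bundle on $X^{[n]}$ carrying an $S_n$-action. Since $X$ is a connected smooth surface, $X^{[n]}$ is smooth and connected by Fogarty's theorem; and for each $g \in S_n$ the trace of $g$ on the fibres of $\rho$ is a regular function on $X^{[n]}$ taking values in the finite set of sums of roots of unity (here $p > n$ forces $g$ to be semisimple with eigenvalues of order prime to $p$), hence constant. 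Thus the fibrewise $S_n$-character is constant, and as it equals the regular representation over $U$ every fibre of $\rho$ affords the regular representation. Viewing $X_n \hookrightarrow X^{[n]} \times X^n$ via $(\rho,\sigma)$, the family $\rho\colon X_n \to X^{[n]}$ is therefore a flat family of $S_n$-invariant length-$n!$ subschemes of $X^n$ affording the regular representation, i.e.\ a flat family of $S_n$-clusters. The universal property of the $S_n$-Hilbert scheme produces a morphism $\tau\colon X^{[n]} \to \mathrm{Hilb}^{S_n}(X^n)$, and, the generic cluster being a free orbit, $\tau$ lands in the distinguished component $Y_n$ containing the free orbits.

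It remains to show $\tau$ is an isomorphism. Both $X^{[n]}$ and $Y_n$ are proper and birational over $X^{(n)}$, restricting to the identity over $U$, so $\tau$ is a proper birational morphism of $X^{(n)}$-schemes. I would finish in one of two equivalent ways. The quickest is Zariski's main theorem: one checks that the cluster $\rho^{-1}(I)$ recovers the ideal $I$, so that $\tau$ is quasi-finite; being also proper it is finite, and a finite birational morphism onto the normal variety $Y_n$ is an isomorphism. Alternatively one constructs the inverse by hand: for an $S_n$-cluster $W$ with coordinate ring $A = H^0(\mathcal{O}_W)$ (the regular representation), the invariant subring $A^{S_{n-1}}$ under the stabiliser $S_{n-1} \subset S_n$ of one coordinate is $n$-dimensional and defines a length-$n$ subscheme of $X$; showing this inverts $\tau$ again only uses the flatness and regular-representation structure established above.

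The main obstacle is the $n!$-theorem itself: establishing the Cohen--Macaulayness of $X_n$, equivalently the flatness of $\rho$, is the deep geometric input, and it is precisely here that Haiman's Polygraph theorem enters. Everything downstream---the construction of $\tau$ and the verification that it is an isomorphism---is comparatively formal once flatness and the regular-representation property are in hand. The hypothesis $\operatorname{char} k = 0$ or $p > n$ is used twice: it ensures $p \nmid |S_n| = n!$, so that the character-theoretic identification of the fibres with the regular representation is valid, and it is the range in which the Polygraph theorem is available.
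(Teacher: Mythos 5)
For calibration: the paper does not prove this theorem at all --- it quotes it from Haiman's paper on the $n!$-conjecture, adding only a footnote that the argument survives in characteristic $p>n$ because the Polygraph theorem is available in that generality. So your proposal has to be measured against Haiman's own argument, whose skeleton you reproduce faithfully: the isospectral Hilbert scheme $X_n$, the $n!$-theorem giving Cohen--Macaulayness, hence (by miracle flatness over the base $X^{[n]}$, which is smooth by Fogarty) finiteness and flatness of $\rho$ of degree $n!$, the constancy of the fibrewise $S_n$-character (using that $p>n$ forces $p \nmid n!$), and the resulting classifying morphism $\tau\colon X^{[n]} \to Y_n$. Up to this point everything you write is correct, including where you place the characteristic hypothesis.

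The genuine gap is the endgame, which you declare ``comparatively formal''; it is not, and both of your proposed completions break at the same spot. (i) The Zariski-main-theorem route needs the \emph{target} $Y_n$ to be normal, and this is unavailable at this stage: normality (indeed smoothness) of the distinguished component of $\operatorname{Hilb}^{S_n}(X^n)$ is a consequence either of the very isomorphism $Y_n \cong X^{[n]}$ you are proving, or of the Bridgeland--King--Reid argument via the New Intersection Theorem; assuming it is circular. Without normality of the target, ``finite $+$ birational $\Rightarrow$ isomorphism'' simply fails --- the normalization of a cuspidal curve is finite and birational. (ii) The inverse-by-hand route needs the natural map $\mathcal{O}_X \to A^{S_{n-1}}$ (pullback along the first projection, landing in the $S_{n-1}$-invariants of a cluster's coordinate ring) to be \emph{surjective}, so that $A^{S_{n-1}}$ really is the coordinate ring of a length-$n$ subscheme of $X$; the invariants have the right dimension $n$, but a quotient structure is not automatic. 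Surjectivity holds at free orbits and is an open condition on $Y_n$, yet nothing formal forces it at degenerate clusters; your quasi-finiteness claim that ``the cluster $\rho^{-1}(I)$ recovers the ideal $I$'' is equivalent to the same assertion. This is precisely where Haiman does real work beyond the $n!$-theorem, using his Proj/blowup description of $X^{[n]}$ in terms of powers of the alternating ideal (equivalently, the identification of $X_n/S_{n-1}$ with the universal family over $X^{[n]}$). So: correct architecture, correct identification of the Polygraph input and of the role of the characteristic, but the passage from $\tau$ to an isomorphism requires one more non-formal ingredient that your sketch waves through.
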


Combining this result with Theorem \ref{thm:BKR} we obtain a well-known derived equivalence.

\begin{cor}\label{cor:Hilbert}
If $X$ denotes a surface defined over a field of characteristic $p > n$ or zero, and $X^{[n]}$ denotes the Hilbert scheme of length $n$ subschemes, then we have a natural derived equivalence $$D^b_{coh}(X^{[n]}) \cong D^b_{coh}([X^n/S_n]).$$
\end{cor}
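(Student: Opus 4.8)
The goal is to prove Corollary \ref{cor:Hilbert}, which asserts a derived equivalence $D^b_{coh}(X^{[n]}) \cong D^b_{coh}([X^n/S_n])$ for a surface $X$ over a field of characteristic $p > n$ or zero. The plan is to exhibit this as a direct composition of the two named results: Haiman's theorem identifying the $S_n$-Hilbert scheme $Y_n$ of $X^n$ with the Hilbert scheme of points $X^{[n]}$, and the Bridgeland--King--Reid McKay correspondence of Theorem \ref{thm:BKR}, which produces a Fourier--Mukai equivalence $D^b_{coh}(Y_n) \cong D^b_{coh}([X^n/S_n])$ once the hypotheses are verified for the pair $(X^n, S_n)$.

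Concretely, I would first set $\Gamma := S_n$ acting on the smooth quasi-projective variety $X^n$ by permutation of factors; the characteristic assumption $p > n = |S_n \text{ acting generically freely}|$ — more precisely $p > n$ — ensures $p$ does not divide the relevant orbit sizes and that the orbifold/quotient-stack machinery applies. The variety $X^n$ is smooth and quasi-projective since $X$ is a surface, so the ambient hypotheses of Theorem \ref{thm:BKR} are met. To invoke Theorem \ref{thm:BKR} I must check the two substantive conditions: that the $S_n$-Hilbert scheme $Y_n$ is smooth, and that the fibre-product estimate $\dim (Y_n \times_{X^n/S_n} Y_n) \leq \dim Y_n + 1$ holds. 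The smoothness is supplied precisely by Haiman's theorem, since $Y_n \cong X^{[n]}$ and the Hilbert scheme of points on a smooth surface is smooth (of dimension $2n$); this is exactly the situation anticipated in the remark after Theorem \ref{thm:BKR} that ``$Y$ will be already known to be smooth for different reasons,'' sidestepping any appeal to the New Intersection Theorem.

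The main obstacle is therefore verifying the dimension estimate on the fibre product $Y_n \times_{X^n/S_n} Y_n$. Here I would use the identification $Y_n \cong X^{[n]}$ together with the Hilbert--Chow morphism $X^{[n]} \to X^{(n)} = X^n/S_n$, whose fibres over a point of the symmetric product have dimension controlled by the partition type of the corresponding $0$-cycle; the standard stratification estimates for punctual Hilbert schemes of surfaces (Brian\c{c}on, and the dimension count that $X^{[n]} \to X^{(n)}$ is semismall) yield that fibres have dimension at most $n - 1$ over the deepest stratum while the base stratum has correspondingly low dimension, giving $\dim(X^{[n]} \times_{X^{(n)}} X^{[n]}) \leq 2n + 1 = \dim X^{[n]} + 1$. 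Once this estimate is in hand, Theorem \ref{thm:BKR} applies verbatim with $X$ replaced by $X^n$ and $\Gamma = S_n$, producing the Fourier--Mukai equivalence
$$
D^b_{coh}(Y_n) \cong D^b_{coh}([X^n/S_n]).
$$
Finally, transporting this equivalence along the natural isomorphism $Y_n \cong X^{[n]}$ furnished by Haiman's theorem gives the desired
$$
D^b_{coh}(X^{[n]}) \cong D^b_{coh}([X^n/S_n]),
$$
with the kernel being the structure sheaf of the universal $S_n$-cluster, completing the proof.
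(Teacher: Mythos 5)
Your proposal is correct and follows essentially the same route as the paper: combine Haiman's identification $Y_n \cong X^{[n]}$ (which also supplies the smoothness of $Y_n$, via the smoothness of Hilbert schemes of points on smooth surfaces) with Theorem \ref{thm:BKR}, checking the required fibre-product dimension estimate by means of Brian\c{c}on's bound $\dim \Hilb^m_0 \mathbb{A}^2 = m-1$ on punctual Hilbert schemes. The paper's own proof is precisely this argument, stated more tersely.
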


Note that the required dimension estimate follows from the classical result of Brian\c{c}on (\cite{MR0457432}) and Iarrabino (Corollary 1 in \cite{MR0308120}) that for the punctual Hilbert scheme $\Hilb^m_0 \mathbb{A}^2$ we have $$\dim \Hilb^m_0 \mathbb{A}^2 = m-1.$$ 

\section{Moduli spaces of dimension two}\label{sec:surficial}

Let $Q$ be a Dynkin diagram, such that the corresponding affine Dynkin diagram $\tilde{Q}$ is comet-shaped (see subsection \ref{ssec:parabolic}). The only Dynkin diagrams satisfying this assumption are $A_0$, $D_4$, $E_6$, $E_7$ and $E_8$. In subsection \ref{ssec:parHiggs} we explained how comet-shaped graphs together with a dimension vector encode moduli problems for parabolic Higgs bundles. The graphs $\tilde{Q}$ listed above together with the basic imaginary root $\lambda$ are exactly the ones corresponding to the moduli spaces of parabolic Higgs bundles of dimension $2$ which we will describe.

The $A_0$-case is the simplest one, it describes Higgs bundles of rank one on an elliptic curve $E$ and the moduli space is $T^*E$. Nonetheless there are many other examples of two-dimensional moduli spaces of Higgs bundles that are somehow reminiscent of this one. To each graph describing a moduli space of parabolic Higgs bundles we can associate a finite group $\Gamma$. For $D_4$, $E_6$, $E_7$ and $E_8$ these groups are $\mathbb{Z}/2\mathbb{Z}$, $\mathbb{Z}/3\mathbb{Z}$, $\mathbb{Z}/4\mathbb{Z}$ and $\mathbb{Z}/6\mathbb{Z}$. Let now $E$ be an appropriate elliptic curve with a $\Gamma$-action. In the $D_4$-case $E$ is an arbitrary elliptic curve with $\mathbb{Z}/2\mathbb{Z}$ acting on it via $x \mapsto -x$. In all the other cases the $\Gamma$-action stems from complex multiplication on the curve $E$, as has been explained in the introduction. This allows us to formulate the following folklore Theorem which will be proved in subsection \ref{ssec:crepant}.

\begin{thm}\label{thm:toyexamples}
The moduli space of stable parabolic Higgs bundles $\ssHiggs(Q,{\lambda})$ of orbifold degree zero, with respect to weights $\alpha_i := \frac{i}{n}$ for $i < n-1$ and $1 > \alpha_{n-1} > \frac{n-1}{n}$, associated to the orbifold $[E/\Gamma]$ is naturally isomorphic to the $\Gamma$-Hilbert scheme of the surface $T^*E$.
\end{thm}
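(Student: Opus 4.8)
The plan is to pass through three successive correspondences — the orbifold/parabolic dictionary, the BNR correspondence, and a relative Fourier--Mukai transform — and to match the numerical data at each stage so that the stable objects of the prescribed type become exactly the $\Gamma$-clusters on $T^*E$. First I would use Propositions \ref{prop:parabolic-orbibundles} and \ref{prop:orbi-parabolic} to replace the moduli problem of parabolic Higgs bundles on the weighted curve $\widehat{X} = (\mathbb{P}^1, D)$ by that of Higgs bundles on the orbicurve $\widetilde{X} = [E/\Gamma]$, equivalently of $\Gamma$-equivariant Higgs bundles on $E$. Here the basic imaginary root $\lambda$ dictates the numerical type: the central multiplicity $|\Gamma|$ forces the orbibundle to have rank $n = |\Gamma|$, while the arm data forces a complete flag at each orbifold point, which by Lemma \ref{lemma:isotropy-representation} is precisely the condition that each isotropy group act through its regular representation on the corresponding fibre. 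By Lemma \ref{lemma:canonicalweights} the canonical weights $\alpha_i = i/n$ identify orbifold degree with parabolic degree, so the hypothesis of orbifold degree zero pins down the degree of the equivariant bundle; the remaining freedom in the top weight $\alpha_{n-1} \in (\tfrac{n-1}{n},1)$ is reserved for the very last step, where it fixes a stability chamber.

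Next I would apply the weak BNR correspondence (Proposition \ref{prop:BNR}), which is \'etale-local on the base and therefore descends to the orbicurve: a $\Gamma$-equivariant Higgs bundle on $E$ becomes a $\Gamma$-equivariant coherent sheaf $\mathcal{F}$ on $T^*E$, finite of degree $n$ over $E$. After trivialising $\OmegaX$ by an invariant one-form we have $T^*E \cong E \times \mathbb{A}^1$, and I would use the projection $p: T^*E \to \mathbb{A}^1$ whose fibres are copies of $E$; the equivariant objects induced from a single rank-one Higgs bundle on $E$ have spectral support a union of such fibres. I would then apply the Fourier--Mukai transform relative to $\mathbb{A}^1$ along this elliptic fibration, with kernel the relative Poincar\'e bundle of the dual fibrations over $\mathbb{A}^1$ and using the principal polarisation $E \cong \hat{E}$. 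Fibrewise this sends a degree-zero line bundle on a fibre $E \times \{c\}$ to a skyscraper sheaf on $\hat{E} \times \{c\}$, so it transforms the degree-$n$ spectral sheaf into a length-$n$ sheaf on $T^*E \cong \hat{E} \times \mathbb{A}^1$. The $\Gamma$-action on $E$ induces, through the complex multiplication, a compatible action on $\hat{E}$ and on $\mathbb{A}^1$ (via the character by which $\Gamma$ scales the invariant one-form), so the Poincar\'e kernel is $\Gamma$-equivariant and the transform descends to the quotient stacks by Lemma \ref{lemma:equivariant} (equivalently Lemma \ref{lemma:equivariant_stable}). Tracing the regular-representation condition through the transform, one checks that the resulting length-$n$ sheaf is $\Gamma$-invariant and carries the regular representation on its global sections, i.e. it is the structure sheaf of a $\Gamma$-cluster. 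Carrying this out in families realises the moduli functor as $\Hilb^{\Gamma}(T^*E)$, and the final clause of Theorem \ref{thm:main2} — that this is a crepant resolution of $T^*E/\Gamma$ — then follows from the classical two-dimensional McKay correspondence (as in Theorem \ref{thm:BKR}), since the cotangent lift makes $\Gamma$ act symplectically on the surface $T^*E$.

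The hard part will be the last matching step: one must show that stability of the parabolic Higgs bundle, for weights in the chamber $1 > \alpha_{n-1} > \tfrac{n-1}{n}$, corresponds precisely to the geometric (Bridgeland--King--Reid) stability defining $\Gamma$-clusters, rather than to $S$-equivalence at the singular quotient $T^*E/\Gamma$. Concretely, the canonical weights sit on a wall at which the moduli space degenerates to the GIT quotient, and perturbing $\alpha_{n-1}$ off this wall is exactly what resolves the fixed-point singularities into the Hilbert scheme; verifying that this is a generic chamber — so that there are no strictly semistable objects and the fine moduli space is smooth — and that it selects the cluster sheaves is the heart of the argument. A secondary technical point is the $\Gamma$-equivariance of the Poincar\'e kernel, which relies on identifying the dual of the complex-multiplication action on $\hat{E}$ with the given action on $E$ under the principal polarisation; this is precisely where the special CM elliptic curves of the $\widetilde{E}_6$, $\widetilde{E}_7$ and $\widetilde{E}_8$ cases are needed.
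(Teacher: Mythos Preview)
Your overall architecture --- orbifold/parabolic dictionary, BNR, equivariant Fourier--Mukai relative to the Hitchin base, regular--representation matching via Lemma~\ref{lemma:isotropy-representation} --- is exactly the paper's route. But two load-bearing steps are missing, and without them the argument does not close.

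\textbf{The surjection data.} A point of the $\Gamma$-Hilbert scheme is not merely a $\Gamma$-equivariant length-$|\Gamma|$ torsion sheaf $\F$ with regular representation on $H^0(\F)$; it is such a sheaf \emph{together with} a surjection $s:\Oo_{T^*E}\twoheadrightarrow\F$. Your sentence ``the resulting length-$n$ sheaf \dots\ is the structure sheaf of a $\Gamma$-cluster'' conflates the two. The paper computes $\Phi(\Oo_{T^*E})\cong\Oo_{T_0^*E^{\dual}}[-1]$, so under Fourier--Mukai a map $\Oo\to\F$ becomes, via Serre duality, a vector $v\in\Phi(\F)_0=\Ee_0$, landing in the one-dimensional line $F_{n-1}=\Ee_0^{\Gamma}$. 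Because the regular representation has a one-dimensional invariant part, this vector is unique up to scalar and is absorbed by rigidification; that is precisely why no extra leg appears on the Dynkin diagram in this theorem (contrast Theorem~\ref{thm:Hilbert}). You never transform $s$, so you have not actually produced a map between the two moduli functors. Relatedly, what you call ``BKR stability'' is really the Quot condition encoded in $s$, and the paper's stability check uses $v$ directly: among admissible (hence slope-$0$ semistable) Higgs subbundles, the only one of degree zero containing $v$ is $\Phi(\F)$ itself, and the perturbed weight $\alpha_{n-1}>\tfrac{n-1}{n}$ then forces strict stability.

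\textbf{Admissibility and the direction of the map.} You run the argument from Higgs bundles to torsion sheaves and assert that the spectral sheaf transforms to a length-$n$ \emph{sheaf}, justified by ``fibrewise a degree-zero line bundle goes to a skyscraper''. That is only the generic picture; over collisions in the Hitchin base the spectral sheaf is not a line bundle on each fibre, and a priori the transform is a complex. The paper isolates exactly those Higgs bundles whose transform is a sheaf --- the \emph{admissible} ones (Definition~\ref{defi:admissible}, Lemma~\ref{lemma:admissible}) --- and for this reason builds the map in the opposite direction, $Y\to\ssHiggs$. Surjectivity (and hence the statement that every stable Higgs bundle of the prescribed type is admissible) is then obtained from properness of the Hitchin morphism together with equality of dimensions and connectedness. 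Your outline has no substitute for this step.
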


A formula for the dimension of $\ssHiggs(\widehat{X},n,\lambda_{\bullet})$ is given in \cite[p. 3]{MR1411302}, assuming the moduli space is non-empty: 
\begin{equation}\label{eqn:dimension}
2(g-1)n^2 + 2 + \sum_{p \in D} (n^2 - \sum_{i = 1}^{n_p} (\lambda_{pi +1} - \lambda_{pi})^2).
\end{equation} 
We have the estimate $$\sum_{i = 1}^{n_p} (\lambda_{pi +1} - \lambda_{pi})^2 \leq n^2$$ which follows from the inequality 
\begin{equation}\label{eqn:inequality}
\sum_{i = 1}^nx_i^2 \leq (\sum_{i=1}^nx_i)^2,
\end{equation} 
where $x_i \geq 0$. In particular we see that there are two possible cases, where the expression (\ref{eqn:dimension}) specializes to $2$. If $g=1$ and $D=0$, since the inequality (\ref{eqn:inequality}) is strict if there are two non-zero summands; and $g = 0$ and $$-2 n^2 + \sum_{p \in D} (n^2 - \sum_{i = 1}^{n_p-1} (\lambda_{pi +1} - \lambda_{pi})^2) = 0.$$ This expression on the other hand is $-2q$, where $q$ denotes the quadratic form associated to the star-shaped graph

\begin{center}
\begin{tikzpicture}
\node at (2,0) {$\bullet$};
\node at (3,1) {$\bullet$};
\node at (4,1) {$\bullet$};
\node at (5,1) {$\dots$};
\node at (6,1) {$\bullet$};
\draw [thick] (2,0) -- (3,1);
\draw [thick] (3,1) -- (4,1);
\draw [thick] (4,1) -- (4.5,1);
\draw [thick] (5.5,1) -- (6,1);

\node at (3,0.5) {$\bullet$};
\node at (4,0.5) {$\bullet$};
\node at (5,0.5) {$\dots$};
\node at (6,0.5) {$\bullet$};
\draw [thick] (2,0) -- (3,0.5);
\draw [thick] (3,0.5) -- (4,0.5);
\draw [thick] (4,0.5) -- (4.5,0.5);
\draw [thick] (5.5,0.5) -- (6,0.5);

\node at (5,0) {\rotatebox{90}{$\dots$}};

\node at (3,-0.5) {$\bullet$};
\node at (4,-0.5) {$\bullet$};
\node at (5,-0.5) {$\dots$};
\node at (6,-0.5) {$\bullet$};
\node at (6.2,-0.5) {.};

\draw [thick] (2,0) -- (3,-0.5);
\draw [thick] (3,-0.5) -- (4,-0.5);
\draw [thick] (4,-0.5) -- (4.5,-0.5);
\draw [thick] (5.5,-0.5) -- (6,-0.5);
\end{tikzpicture}
\end{center}

If $Q$ is of affine Dynkin type, we see in particular that all such dimension vectors are multiples of the basic imaginary root $\alpha$.

\subsection{Duality for elliptic curves with symmetries}

Let $A$ be an abelian variety, the dual abelian variety $A^{\dual}$ is equivalent to the stack $Map_{grp}(A,B\G_m)$ representing morphisms of group stacks $A \to B\G_m$ (\cite[p. 184]{MR918564}). Equivalently we can say that $A^{\dual}$ classifies extensions of $A$ by $\G_m$. This construction is analogous to the dual of a vector space $V^{\dual} := Hom(V,k)$, with $B\G_m$ taking the place of the one-dimensional vector space $k$. For the same reason as there is a canonical morphism $V \to V^{\dual\dual}$ for vector spaces there is a a canonical morphism $$\psi_A: A \to A^{\dual\dual},$$ which is an isomorphism. This in turn gives rise to a morphism $$A \times A^{\dual} \to B\G_m.$$
As the stack $B\G_m$ classifies line bundles, we see that there is a canonical line bundle $\Pp$ on $A \times A^{\dual}$, called \emph{Poincar\'e bundle}. There is a general duality theory for group stacks, an exposition of which is given in \cite{AriApp}.

It has been shown by Mukai in \cite{MR946249} that the Poincar\'e line bundle $\Pp$ induces a natural equivalence of categories 
\begin{equation}\label{eqn:mukai}
D^b_{coh}(A) \cong D^b_{coh}(A^{\dual}).
\end{equation}
If $\phi: A \to B$ is a morphism we obtain a dual morphism $\phi^{\dual}: B^{\dual} \to A^{\dual}$ which sends an $S$-point $f: B \times S \to B\G_m \times S$ of $B^{\dual}$ to the composition 
\begin{center}
\begin{tikzpicture}
\node at (0,0) {$A \times S$};
\node at (2.6,0) {$B \times S$};
\node at (5.5,0) {$B\G_m \times S.$};

\node at (1.3,0.3) {$\phi \times \id_S$};
\node at (4,0.3) {$f$};

\draw [->,thick] (0.6, 0) -- (2,0);
\draw [->,thick] (3.2,0) -- (4.6,0);
\end{tikzpicture}
\end{center}

By definition the diagram
\begin{center}
\begin{tikzpicture}
\node at (0,1.5) {$A$};
\node at (0,0) {$B$};
\node at (1.7,1.5) {$A^{\dual\dual}$};
\node at (1.7,0) {$B^{\dual\dual}$};

\node at (-0.2,0.75) {$\phi$};
\node at (1.9,0.75) {$\phi^{\dual\dual}$};

\draw [->,thick] (0.3,0) -- (1.2,0);
\draw [->,thick] (0.3,1.5) -- (1.2,1.5);
\draw [->,thick] (0, 1.2) -- (0, 0.3);
\draw [->,thick] (1.5, 1.2) -- (1.5, 0.3);

\end{tikzpicture}
\end{center}
is commutative. In particular we conclude that if $\Gamma$ is a finite group acting on $A$ then Mukai's equivalence (\ref{eqn:mukai}) is $\Gamma$-equivariant; in the strong sense that the integral kernel $\Pp$ is endowed with a $\Gamma$-equivariant structure.

If $E$ denotes an elliptic curve we may identify $$T^*E = E \times \A = \Higgs(E^{\dual},1),$$ where $\A$ denotes the Hitchin base. Note that there is a canonical identification of elliptic curves $E \cong E^{\dual}$, given by the Abel-Jacobi map. This induces an identification of Hitchin bases $$\A(E) = \A(E^{\dual}).$$ Using this autoequivalence, the remarks above, Lemma \ref{lemma:changeofbase} and Lemma \ref{lemma:equivariant} we arrive at the following well-known observation.

\begin{prop}\label{prop:equivariant}
There is a canonical equivalence of derived categories of Fourier-Mukai type relative to $\A$ $$D^b_{coh}(T^*E) \cong D^b_{coh}(T^*E^{\dual}).$$ If $E$ is equipped with a $\Gamma$-action this equivalence respects the $\Gamma$-action, in particular we have an equivalence of derived categories of Fourier-Mukai type relative to $\A/\Gamma$  $$D^b_{coh}([T^*E/\Gamma]) \cong D^b_{coh}([T^*E^{\dual}/\Gamma]).$$
\end{prop}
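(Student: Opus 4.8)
The plan is to build the first equivalence directly from Mukai's transform, then deduce the equivariant version by descending the Poincaré kernel to the quotient stacks via Lemma \ref{lemma:equivariant}.

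First I would unwind the identification $T^*E = E \times \A = \Higgs(E^{\dual},1)$ together with the self-duality $E \cong E^{\dual}$ afforded by the Abel--Jacobi map. Mukai's equivalence (\ref{eqn:mukai}) gives a Fourier--Mukai transform $D^b_{coh}(E) \cong D^b_{coh}(E^{\dual})$ with kernel the Poincaré bundle $\Pp$ on $E \times E^{\dual}$. Since $T^*E = E \times \A$ is just a trivial family over the Hitchin base, I would promote $\Pp$ to a kernel $\Pp \boxtimes \Oo_{\A}$ on $(E \times \A) \times_{\A} (E^{\dual} \times \A)$, which is manifestly supported over the diagonal of $\A$. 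This yields the first claimed equivalence $D^b_{coh}(T^*E) \cong D^b_{coh}(T^*E^{\dual})$ relative to $\A$. Formally one invokes Lemma \ref{lemma:basechange} to pull back the equivalence over $\Spec k$ along $\A \to \Spec k$, so that the relative statement over $\A$ follows from absolute Mukai duality.

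The equivariant refinement is where the preceding discussion does the real work. The remarks immediately before the proposition establish that for a finite group $\Gamma$ acting on an abelian variety $A$, Mukai's kernel $\Pp$ carries a canonical $\Gamma$-equivariant structure, coming from the commutativity of the biduality square and the compatibility of $\phi \mapsto \phi^{\dual}$ with the $\Gamma$-action. I would apply this with $A = E$, noting that the $\Gamma$-action on $T^*E = E \times \A$ is the product of the given action on $E$ with the induced action on $\A$ (and the identification $\A(E) = \A(E^{\dual})$ is $\Gamma$-equivariant, since the Abel--Jacobi map intertwines the actions). Thus the kernel $\Pp \boxtimes \Oo_{\A}$ is itself $\Gamma$-equivariant relative to $\A$, i.e. it descends to a kernel on $[T^*E/\Gamma] \times_{[\A/\Gamma]} [T^*E^{\dual}/\Gamma]$. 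Now Lemma \ref{lemma:equivariant} (or Lemma \ref{lemma:equivariant_stable}) applies verbatim: since $\Pp \boxtimes \Oo_{\A}$ induces an equivalence upstairs and is pulled back from a kernel $L$ on the fibre product of quotient stacks, $\Phi_L$ is an equivalence as well, yielding $$D^b_{coh}([T^*E/\Gamma]) \cong D^b_{coh}([T^*E^{\dual}/\Gamma])$$ relative to $\A/\Gamma$.

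The main obstacle I anticipate is the careful bookkeeping of the $\Gamma$-equivariant structure under all the identifications, rather than any deep technical difficulty. Specifically, one must check that the self-duality $E \cong E^{\dual}$ and the resulting identification $\A(E) = \A(E^{\dual})$ are genuinely $\Gamma$-equivariant, and that the $\Gamma$-structure on $\Pp$ supplied by the biduality remarks is compatible with the base-change to the relative kernel over $\A$. To satisfy the hypotheses of Lemma \ref{lemma:equivariant} one also needs $T^*E$ to be a smooth quasi-projective variety proper and flat over $\A$ with $\ch k \nmid |\Gamma|$; flatness and smoothness are clear, and the characteristic condition is exactly Assumption \ref{ass}. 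Everything else is a matter of assembling the cited lemmas, so the proof is short once the equivariance of the identifications is pinned down.
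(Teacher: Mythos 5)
Your argument is correct and is essentially the paper's own proof: the paper likewise assembles the statement from Mukai's equivalence, the $\Gamma$-equivariant structure on the Poincar\'e kernel $\Pp$ supplied by the biduality discussion, the identification $T^*E = E \times \A$, and Lemma \ref{lemma:equivariant}. The only cosmetic difference is that you spread the kernel over $\A$ via Lemma \ref{lemma:basechange}, whereas the paper cites Lemma \ref{lemma:changeofbase} --- which is also the lemma needed for the last (routine) step you leave implicit, namely pushing your descended kernel forward from the fibre product over the quotient stack $[\A/\Gamma]$ to the fibre product over the coarse quotient $\A/\Gamma$, so that the equivalence is literally of Fourier--Mukai type relative to $\A/\Gamma$ as stated.
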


\subsection{Higgs bundles and crepant resolutions}\label{ssec:crepant}

In this subsection we show how moduli spaces of Higgs bundles give rise to crepant resolutions of certain quotients of cotangent bundles of elliptic curves. We start with a technical definition, which turns out to be essential for relating Higgs bundles with torsion sheaves via Fourier-Mukai transform.

\begin{defi}\label{defi:admissible}
Let $X$ be an orbicurve and $(E,\theta)$ a Higgs bundle on it. A composition series for $(E,\theta)$ is an increasing filtration by Higgs subbundles $$(E^{\bullet},\theta) \subset (E,\theta),$$ such that the successive quotients $E^{i+1}/E^i$, called factors, are locally free and have no non-trivial Higgs bundle as a quotient. The Higgs bundle $(E,\theta)$ is said to be \emph{admissible} if there exists a composition series, such that all factors are of rank one and degree zero. An $S$-family of Higgs bundles is called admissible if it is admissible over every geometric point of $S$. We denote the stack of rank $n$ admissible Higgs bundles on an orbicurve $X$ by $\adHiggs(X,n)$.
\end{defi}

It is a well-known fact (Lemma 4.2(1) in \cite{MR2484736}) that an extension of semistable objects of the same slope is again semistable. We record the following implication for admissible Higgs bundles for later use.

\begin{rmk}\label{rmk:admissible}
Admissible Higgs bundles are semistable of slope zero.
\end{rmk}

We will see later that admissible Higgs bundles correspond to torsion sheaves supported on a dual Hitchin fibration. This will induce an equivalence of stacks. 

\begin{defi}
For an orbisurface $S$ we denote by $\Tt(S,n)$ the stack of length $n$ torsion free sheaves on $S$, i.e. the $2$-functor $$\mathrm{Aff}^{op} \to Grpd$$ which sends an affine scheme $T$ to the groupoid of quasi-coherent sheaves $\F$ on $S \times T$, such that $\pi: \supp \F \to T$ is finite and $\pi_*\F$ is locally free of rank $n$ on $S$.
\end{defi}

In the next lemma we formulate how admissible Higgs bundles are related to torsion sheaves.

\begin{lemma}\label{lemma:admissible}
The equivalence of Proposition \ref{prop:equivariant} gives rise to an equivalence of stacks $$\Tt(T^*E,n) \cong \adHiggs(E^{\dual},n).$$
\end{lemma}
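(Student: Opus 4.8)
The plan is to transport the fibrewise Fourier--Mukai transform of Proposition \ref{prop:equivariant} along the Beauville--Narasimhan--Ramanan correspondence. Writing $T^*E = E \times \A$ and $T^*E^{\dual} = E^{\dual} \times \A$, the equivalence of Proposition \ref{prop:equivariant} is relative to $\A$ and restricts over each point $a \in \A$ to Mukai's transform $\Phi \colon D^b_{coh}(E) \cong D^b_{coh}(E^{\dual})$ with kernel the Poincar\'e bundle $\Pp$. First I would record the \emph{atomic correspondence}: the skyscraper sheaf of a point $(x,a) \in E \times \A$ is WIT$_0$ and transforms to the degree-zero line bundle $\Pp_x$ supported on the slice $E^{\dual}\times\{a\}$. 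Since the relative transform is $\Oo_{\A}$-linear and the tautological cotangent coordinate on $T^*E^{\dual}$ is identified with the coordinate on $\A = \mathbb{A}^1$, this slice datum is exactly a rank-one degree-zero Higgs bundle on $E^{\dual}$ in the sense of Proposition \ref{prop:BNR}, with Higgs scalar $a$. In particular the transform preserves the $\A$-coordinate and turns the $\Oo_{T^*E}$-module structure into the $\Sym^{\bullet}\Theta_{E^{\dual}}$-module structure defining the Higgs field.

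Next I would treat a general length-$n$ torsion sheaf $\F$ on $T^*E$. Fibrewise over $\A$ it is a finite-length torsion sheaf on $E$, hence WIT$_0$ for $\Phi$; by cohomology and base change the relative transform $\Phi(\F)$ is therefore a single sheaf $\Gg$ on $T^*E^{\dual}$, flat over the base and with $\pi_{S,*}\Gg$ locally free of rank $n$, the total length of $\F$. Via Proposition \ref{prop:BNR} this $\Gg$ is the spectral sheaf of a rank-$n$ Higgs bundle on $E^{\dual}$. To see it is admissible in the sense of Definition \ref{defi:admissible}, I would choose a composition series of $\F$ with skyscraper quotients; as the skyscraper subquotients and their extensions are WIT$_0$, the transform is exact on this series and carries it to a filtration of $\Gg$ by $\Oo_{\A}$-submodules, which push forward to $\theta$-invariant subbundles whose factors are the degree-zero line bundles produced by the atomic correspondence. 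Hence $\Phi(\F)$ is admissible, and by Remark \ref{rmk:admissible} automatically semistable of slope zero.

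For essential surjectivity and the quasi-inverse I would run the same argument with the inverse transform $\hat\Phi$. An admissible Higgs bundle on $E^{\dual}$ corresponds under Proposition \ref{prop:BNR} to a sheaf on $T^*E^{\dual}$ supported on finitely many horizontal slices and filtered by degree-zero line bundles on those slices; each such line bundle is WIT$_1$ for $\hat\Phi$ and transforms, up to the standard cohomological shift, back to a skyscraper on the corresponding slice of $T^*E$. Assembling these along the filtration shows that the inverse transform of an admissible Higgs bundle is a length-$n$ torsion sheaf, and that the two constructions are mutually inverse up to natural isomorphism. Finally I would check that all of this is compatible with arbitrary base change in the family variable, so that we obtain an equivalence of the stacks $\Tt(T^*E,n)$ and $\adHiggs(E^{\dual},n)$ and not merely a bijection on isomorphism classes.

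The step I expect to be the main obstacle is controlling the cohomological degree of the transform in families: one must verify, via cohomology and base change, that the fibrewise WIT$_0$ property of torsion sheaves globalises so that $\Phi(\F)$ is concentrated in a single degree with $\pi_{S,*}\Phi(\F)$ locally free of the expected rank over every base, and one must keep careful track of the degree shift distinguishing $\Phi$ from its quasi-inverse $\hat\Phi$ when proving essential surjectivity.
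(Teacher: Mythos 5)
Your proposal is correct and follows essentially the same route as the paper: transport along the relative Fourier--Mukai equivalence of Proposition \ref{prop:equivariant}, the atomic fact that skyscrapers transform to rank-one degree-zero Higgs bundles, a composition-series argument for admissibility, and the inverse transform for essential surjectivity. The step you flag as the main obstacle --- controlling the cohomological degree in families --- is dispatched more simply in the paper: since $\supp \F \to T$ is finite, the pushforward $\pi_*\Phi(\F)$ is computed as $p_{2,*}(p_1^*\F \otimes \Pp)$, a pushforward along a finite morphism, which is automatically concentrated in degree zero and locally free of rank $n$, so no WIT or cohomology-and-base-change analysis is needed.
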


\begin{proof}
Given a $T$-point $\F$ of $\Tt(T^*E,n)$ we have to verify that the Fourier-Mukai transform $\Phi(\F)$ on $T^*E^{\dual}$ is a $T$-family of quasi-coherent sheaves on $T^*E^{\dual}$, i.e. a Higgs bundle on $E^{\dual}$ via the BNR correspondence (Prop. \ref{prop:BNR}). This formulation is justified, as we know from Lemma \ref{lemma:basechange} that for every $k$-scheme $T$ there is an induced Fourier-Mukai transform $$\Phi: D^b_{coh}(T^*E \times T) \cong D^b_{coh}(T^*E^{\dual} \times T).$$

If $$\pi: T^*E^{\dual} \to E^{\dual}$$ denotes the canonical projection, we need to verify that $\pi_*\Phi(\F)$ is a locally free sheaf of rank $n$. This push-forward can be calculated as the Fourier-Mukai transform of $\F$ along the functor $$\Psi: D^b_{coh}(T^*E\times T) \to D^b_{coh}(E^{\dual}\times T)$$ induced by the Poincar\'e bundle $\Pp$ on $E \times E^{\dual}$. Let $p_1: T^*E \times E^{\dual} \to T^*E$ and $p_2: T^*E \times E^{\dual} \to E^{\dual}$ denote the canonical projections onto the factors, respectively their base changes with respect to $T$. Then we have$$\pi_*\Phi(\F) = \Psi(F) = p_{2,*}(p_1^*\F \otimes \Pp).$$ But since $\supp \F \to T$ is finite and $\Pp$ is a line bundle, we see that this is a locally free sheaf of rank $n$ on $E^{\dual}$. A similar Fourier-Mukai set-up was used in \cite{MR1859601} to define the vector bundle underlying the Higgs bundle constructed from a torsion sheaf.

We also need to check that the Fourier-Mukai transform $\Phi(\F)$ is a family of admissible Higgs bundles on $[E/\Gamma]$. For this we may replace $S$ by a geometric point and therefore assume that $\F$ has a composition series $\F^{\bullet}$, such that the successive quotients $\F^{i+1}/\F^i$ are skyscraper sheaves of length one. This composition series can be encoded in a sequence of distinguished triangles
\begin{center}
\begin{tikzpicture}
\node at (0,0.05) {$\F^i$};
\node at (1.5,0.05) {$\F^{i+1}$};
\node at (3.5,0.05) {$\F^{i+1}/\F^{i}$};
\node at (5.05,-0.14) {.};
\node at (4.6, 0.15) {$\bullet$};

\draw [->,thick] (0.3,0) -- (1,0);
\draw [->,thick] (1.9,0) -- (2.6,0);
\draw [->,thick] (4.3,0) -- (5,0);
\end{tikzpicture}
\end{center}
Applying the equivalence $\Phi$ to $\F$ we see that $\Phi(\F)$ may be filtered by distinguished triangles
\begin{center}
\begin{tikzpicture}
\node at (0,0.05) {$\Phi(\F^i)$};
\node at (1.5,0.05) {$\Phi(\F^{i+1})$};
\node at (3.5,0.05) {$\Phi(\F^{i+1}/\F^{i})$};
\node at (4.9,-0.14) {.};
\node at (4.6, 0.15) {$\bullet$};

\draw [->,thick] (0.5,0) -- (0.8,0);
\draw [->,thick] (2.2,0) -- (2.5,0);
\draw [->,thick] (4.5,0) -- (4.8,0);
\end{tikzpicture}
\end{center}
By assumption $\Phi(\F^{i+1}/\F^i)$ is a quasi-coherent sheaf $T^*E^{\dual}$, corresponding to a rank one degree zero Higgs bundle on $E^{\dual}$ via the BNR correspondence (Prop. \ref{prop:BNR}). By induction on $n$ we obtain that $\Phi(\F)$ corresponds to an admissible Higgs bundle.

Similarly we see that an admissible Higgs bundle of rank $n$ on $E^{\dual}$ is sent to a length $n$ torsion free sheaf on $T^*E$.
\end{proof}

We have found a way of relating torsion sheaves on the surface $T^*E$ to Higgs bundles on the dual elliptic curve $E^{\dual}$. As a next step we investigate the transform of a point of the $\Gamma$-Hilbert scheme $Y$ of $T^*E$. Such a point gives rise to a $\Gamma$-equivariant torsion sheaf $\F$ on $T^*E$ together with a $\Gamma$-equivariant surjection $s: \Oo_{T^*E} \twoheadrightarrow \F$. As a first approximation we expect to obtain a $\Gamma$-equivariant Higgs bundle of rank $|\Gamma|$ on $E^{\dual}$, due to the functoriality of the construction described above. In the proof below we investigate the structure corresponding to the surjection $s$.

\begin{proof}[Proof of Theorem \ref{thm:toyexamples}]
The $\Gamma$-Hilbert scheme $Y$ of $T^*E$ can be defined in terms of $\mathcal{T} = \mathcal{T}(T^*[E/\Gamma])$. An $S$-point of $Y$ consists of an $S$-point $\F$ of $\mathcal{T}$ together with a surjection $$s: \Oo_{[T^*E/\Gamma] \times S} \twoheadrightarrow \F.$$ Moreover we demand that the $\Gamma$-representation $$Hom(\Oo_{[T^*E/\Gamma]\times S}, \F)$$ is the regular $S$-linear representation of $\Gamma$. We can now try to understand how $s$ transforms under the equivalence of categories $\Phi$.

Let us denote by $T_0^*E$ the closed subscheme of $T^*E$ given by the fibre over zero of $T^*E \to E$. The equivalence $D^b_{coh}(T^*E) \cong D^b_{coh}(T^*E^{\dual})$ sends $\Oo_{T^*E}$ to $\Oo_{T_0^*E^{\dual}}[-1].$ In particular we see that $v:=\Phi(s)$ is a morphism $$v: \Oo_{T_0^*E^{\dual}}[-1] \to \Phi(\F).$$ Serre duality tells us that this is equivalent to a morphism $$v': (\Phi(\F)_0)^{\dual} \to k,$$ i.e. an element $v \in \Phi(\F)_0$. Under this equivalence a morphism $\Oo_{\M} \to \F$ corresponds to a linear map $\tau_*(V^{\dual} \otimes \Oo/L_0^{-1}) \to k$, where $(V,\theta)$ is the $\Gamma$-equivariant Higgs bundle associated to $\F \in \Tt$. But $$\tau_*(V^{\dual} \otimes \Oo/L_0^{-1}) \cong E^{\dual}_0/F^{\dual}_1. $$ In particular we obtain a non-trivial linear map $$k \to F_{n_0-1}$$ by dualizing, i.e. a nonzero vector $v \in F_{n_0-1}$.

The vector space $Hom(\Oo_{[T^*E/\Gamma]\times S}, \F)$ corresponds to $\Phi(\F)_0$, as the argument given above tell us. In particular we see that $Y$ is equivalent to the moduli stack of the data $$(\Ee,\theta,v),$$ where $(\Ee,\theta)$ is an admissible Higgs bundle on $[T^*E/\Gamma]$, $\Ee_0$ is the regular $\Gamma$-representation and $v \in \Ee_0^{\Gamma}$ is a non-zero vector spanning the invariant part of $\Ee_0$. The latter is naturally equivalent to the moduli space of admissible Higgs bundles on $[T^*E/\Gamma]$, such that $\Ee_0$ carries the regular representation. Now we may apply Lemma \ref{lemma:isotropy-representation} to see that this corresponds exactly to the required type of parabolic bundles. 

Stability of the parabolic Higgs bundles follows from the fact that all $\Gamma$-invariant subbundles are of orbifold slope $\leq 0$ (see Remark \ref{rmk:admissible}) and that $\Phi(\F)$ is the only degree zero subbundle containing $v$. Since the weights are the canonical weights except from $\alpha_{n-1}$, stability follows. 

Note that $Y$ is naturally a $\A/\Gamma$-space with respect to the structural morphism $$Y \to T^*E/\Gamma \to \A/\Gamma.$$ Here the first morphism is the Hilbert-Chow morphism. The fact that $\Phi: D^b_{coh}([T^*E/\Gamma]) \cong D^b_{coh}([T^*E^{\dual}/\Gamma]$ is defined relative to $\A/\Gamma$ implies that the morphism $Y \to \M(\widehat{[E^{\dual}/\Gamma]},Q,\lambda)$ is a morphism of $\A/\Gamma$-spaces. We observe as well that this map is proper.

Therefore we have a morphism $$Y \to \M(\widehat{[E^{\dual}/\Gamma]},Q,\lambda)$$ of proper $\A/\Gamma$-spaces. Since both spaces are of equal dimension and connected we conclude that it is surjective. In particular we obtain that every Higgs bundle in the moduli space $\M(\widehat{[E^{\dual}/\Gamma]},Q,\lambda)$ is admissible. This allows us to conclude that $$Y \cong \M(\widehat{[E^{\dual}/\Gamma]},Q,\lambda),$$ with the inverse map provided by the inverse of the Fourier-Mukai transform $\Phi^{-1}$.
\end{proof}

\subsection{Local systems and crepant resolutions}

Using the categorification of geometric class field theory obtained in \cite{Laumon:ys} and \cite{MR1408538} we are able to prove the analogous result for moduli spaces of local systems by similar techniques. In the following we denote by $D_{qcoh}(X,D_X)$ the derived category of quasi-coherent $D_X$-modules on a smooth variety $X$. 

\begin{thm}[Laumon \& Rothstein]\label{thm:laumon-rothstein}
If $A$ is an abelian variety defined over an algebraically closed field of characteristic zero we denote by $A^{\sharp}$ the moduli space of local systems on $A$. Then there exists a canonical equivalence of derived categories
$$\Phi_{CFT}: D_{qcoh}(A^{\sharp}) \cong D_{qcoh}(A^{\dual},D_{A^{\dual}}).$$
\end{thm}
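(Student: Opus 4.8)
The plan is to realize $\Phi_{CFT}$ as a Fourier--Mukai transform along a suitable ``Poincar\'e $D$-module'' and to invert it by the cohomological mechanism underlying Mukai's equivalence (\ref{eqn:mukai}). First I would record the geometry of $A^{\sharp}$. As the moduli space of rank one local systems on $A$ it is the universal vectorial extension of the dual abelian variety, sitting in a short exact sequence of commutative group schemes
$$0 \to H^0(A,\Omega^1_A) \to A^{\sharp} \xrightarrow{\rho} A^{\dual} \to 0,$$
so that the fibre of $\rho$ over a degree zero line bundle is the affine space of flat connections on it, a torsor under the invariant $1$-forms $H^0(A,\Omega^1_A)$. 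This moduli description furnishes a universal flat bundle $(\mathcal{U},\nabla)$ on $A \times A^{\sharp}$, with $\nabla$ a connection along $A$ relative to $A^{\sharp}$. I would then build the kernel by applying the classical Mukai equivalence (\ref{eqn:mukai}) in the $A$-direction: it converts the fibrewise line bundles $\mathcal{U}_s = L_{\rho(s)}$ into skyscrapers on $A^{\dual}$ and, crucially, exchanges the connection $\nabla$ along $A$ for differential-operator data along $A^{\dual}$. This produces a Poincar\'e $D$-module $\Pp^{\sharp}$ on $A^{\sharp} \times A^{\dual}$ carrying a flat connection relative to $A^{\sharp}$ in the $A^{\dual}$-direction; it is precisely this step that explains why the \emph{dual} variety $A^{\dual}$, and differential operators on it, appear. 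With $\Pp^{\sharp}$ in hand, $\Phi_{CFT}$ is the transform $\mathcal{F} \mapsto Rp_{A^{\dual},*}(p_{A^{\sharp}}^*\mathcal{F} \otimes^L \Pp^{\sharp})$, which by construction lands in $D_{qcoh}(A^{\dual},D_{A^{\dual}})$ rather than in plain quasi-coherent sheaves.

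With the functor defined, I would establish invertibility by exhibiting a kernel for the inverse transform and computing the two convolutions, following the template of Lemma \ref{lemma:composition}. On the level of underlying $\Oo$-modules the composition kernel is a convolution of two Poincar\'e-type bundles, which by cohomology and base change is supported on the diagonal and concentrated in a single cohomological degree; this is exactly Mukai's computation that $\Phi_{\Pp} \circ \Phi_{\Pp} \cong (-1)^*[-\dim A]$. The remaining and genuinely new point is to upgrade this identification of $\Oo$-modules to an identification of $D$-modules: one must verify that the connection induced on the composed kernel is the canonical connection along the diagonal, so that the two compositions are, up to the shift above, the identity functors on $D_{qcoh}(A^{\dual},D_{A^{\dual}})$ and on $D_{qcoh}(A^{\sharp})$ respectively.

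The hard part will be controlling this $D$-module structure through the convolution. Conceptually I would organise the argument around the order filtration on $D_{A^{\dual}}$, whose associated graded is $\pi_*\Oo_{T^*A^{\dual}}$ for the projection $\pi\colon T^*A^{\dual} \to A^{\dual}$; forgetting the connection then reduces the underlying statement precisely to Mukai's equivalence (\ref{eqn:mukai}) on $A^{\dual}$, which is already available to us. The genuinely hard content is therefore the connection bookkeeping: one must check that the universal connection furnished by the vectorial-extension structure of $A^{\sharp}$ is exactly the datum that promotes Mukai's $\Oo$-module equivalence to the asserted $D$-module equivalence, and that it is compatible with the convolution of kernels. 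This is the point where the constructions of Laumon (\cite{Laumon:ys}) and Rothstein (\cite{MR1408538}) do their real work, and I would invoke their analysis rather than reprove it from scratch. Finally, I note that the hypothesis $\ch k = 0$ enters decisively, both to guarantee the identification $\mathrm{gr}\,D_{A^{\dual}} \cong \Sym\,\Theta_{A^{\dual}}$ used above and to make the flat-connection data integrable; in positive characteristic one would instead be forced to work with the $p$-curvature and the Azumaya structure of $D_{A^{\dual}}$ over the Frobenius-twisted cotangent bundle.
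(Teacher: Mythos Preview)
The paper does not prove this theorem: it is stated with attribution to Laumon and Rothstein and cited to \cite{Laumon:ys} and \cite{MR1408538}, then used as a black box (e.g.\ in Proposition \ref{prop:torsion-local} and Theorem \ref{thm:flattoyexamples}). There is therefore no ``paper's own proof'' to compare your proposal against.

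That said, your sketch is a faithful outline of the Laumon--Rothstein argument: the identification of $A^{\sharp}$ with the universal vectorial extension of $A^{\dual}$, the construction of the Poincar\'e $D$-module on $A^{\sharp}\times A^{\dual}$ from the universal connection, and the reduction of the convolution computation to Mukai's classical one on the underlying $\Oo$-modules are all correct ingredients. Your own admission that ``I would invoke their analysis rather than reprove it from scratch'' is honest and appropriate here---the genuine content is exactly the $D$-module bookkeeping you flag, and that is what the cited papers carry out. So your proposal is not so much an independent proof as a correct summary of the cited one, which is all that is called for in this context.
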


Note that in positive characteristic we define the ring of differential operators $D_X$ of a smooth variety to be the universal enveloping algebra of the Lie algebroid of tangent vectors $\Theta_X$. The analogue of the above Theorem in positive characteristic is proved in \cite[Cor. 3.8]{chenzhu}, using the techniques developed in \cite{Bezrukavnikov:fr} and \cite{Bezrukavnikov:mz}.

\begin{thm}[Chen--Zhu]\label{thm:chen-zhu}
If $A$ is an abelian variety defined over an algebraically closed field of positive characteristic we denote by $A^{\sharp}$ the moduli space of local systems on $A$. Then there is a canonical equivalence of derived categories $$D_{qcoh}(A^{\sharp}) \cong D_{qcoh}(A^{\dual},D_{A^{\dual}}).$$
\end{thm}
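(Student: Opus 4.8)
The plan is to follow the structure of the Laumon--Rothstein argument behind Theorem \ref{thm:laumon-rothstein}, replacing the characteristic-zero Fourier transform for $D$-modules by its positive-characteristic incarnation developed in \cite{Bezrukavnikov:fr} and \cite{Bezrukavnikov:mz}. The kernel of the desired equivalence should again be the \emph{universal line bundle with connection}: the Poincar\'e bundle $\Pp$ on $A \times A^\dual$, pulled back to $A \times A^\sharp$, carries a tautological relative connection along $A$, and this flat bundle $(\Pp,\nabla^{\mathrm{univ}})$ --- which is at once a $D_A$-module and an $\Oo_{A^\sharp}$-module --- is the integral kernel I would use, exactly as the ordinary Poincar\'e bundle induces Mukai's equivalence (\ref{eqn:mukai}).

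First I would invoke the defining feature of characteristic $p$: by \cite{Bezrukavnikov:mz} the sheaf of differential operators $D_{A^\dual}$ is an Azumaya algebra over the cotangent bundle of the Frobenius twist, $T^*(A^\dual)^{(1)}$. Since $A^\dual$ is an abelian variety its cotangent bundle is trivial, $T^*A^\dual \cong A^\dual \times V$ with $V = H^0(A^\dual,\Omega^1_{A^\dual})$, so the Azumaya base is $(A^\dual)^{(1)} \times V^{(1)}$. The central $p$-curvature operator turns every $D_{A^\dual}$-module into a sheaf over $V^{(1)}$, and dually the $p$-curvature map $A^\sharp \to V^{(1)}$ fibres the moduli of local systems over the same affine space. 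Both categories in the statement thus decompose over $V^{(1)}$, and I would establish the equivalence relative to this base, invoking the compatibilities of Lemmas \ref{lemma:composition} and \ref{lemma:basechange} to assemble the fibrewise statements.

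Over a fixed point $\xi \in V^{(1)}$ the fibre of $A^\sharp$ is a torsor under $(A^\dual)^{(1)}$ (flat line bundles with fixed $p$-curvature differ by a Frobenius-pulled-back line bundle), while the corresponding slice of $T^*(A^\dual)^{(1)}$ is a translate of $(A^\dual)^{(1)}$. The heart of the argument is then to split the Azumaya algebra $D_{A^\dual}$ over this slice and to identify its splitting module, under Mukai's transform, with $(\Pp,\nabla^{\mathrm{univ}})$. The crucial compatibility is that the Fourier--Mukai transform intertwines relative Frobenius on $A$ with Verschiebung on $A^\dual$, so that the Frobenius twist governing the Azumaya picture matches the one governing the $p$-curvature of flat line bundles. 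Granting the splitting, the fibrewise equivalence reduces to Mukai's equivalence (\ref{eqn:mukai}) for the twisted abelian variety $(A^\dual)^{(1)}$.

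The main obstacle is precisely this Azumaya splitting. In characteristic zero the connection direction integrates for free, but in characteristic $p$ one must genuinely produce a splitting of $D_{A^\dual}$ over the relevant subscheme of $T^*(A^\dual)^{(1)}$ and verify that it is the Fourier--Mukai image of the universal flat Poincar\'e bundle; this is the technical content drawn from \cite{Bezrukavnikov:fr} and \cite{Bezrukavnikov:mz}, and is what Chen--Zhu carry out in \cite[Cor. 3.8]{chenzhu}. Everything else --- flatness of the kernel, the module-theoretic bookkeeping, and the descent of the equivalence to the full derived categories --- I expect to be routine once the splitting and the Frobenius/Verschiebung compatibility are in place.
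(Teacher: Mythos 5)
The first thing to say is that the paper contains no proof of this theorem at all: it is imported verbatim from Chen--Zhu, \cite[Cor. 3.8]{chenzhu}, the surrounding text noting only that it is proved there using the techniques developed in \cite{Bezrukavnikov:fr} and \cite{Bezrukavnikov:mz}, and the subsequent remark observing that the elliptic-curve case (the only one the paper ever uses) is also covered by \cite[Thm. 4.10(2)]{Bezrukavnikov:fr}. So there is no in-paper argument to compare your proposal against. Your sketch is a fair reconstruction of what the cited references actually do --- the Azumaya property of $D_{A^{\dual}}$ over $T^*(A^{\dual})^{(1)}$, the decomposition of both categories over the affine space of $p$-curvatures, and the fibrewise reduction to a twisted Mukai duality via a splitting of the Azumaya algebra identified with the universal flat Poincar\'e bundle, using that Frobenius is dual to Verschiebung --- and, like the paper, you end by deferring the one genuinely hard step (producing the splitting) to the very same citation, so the proposal is consistent with the paper's treatment rather than an independent proof. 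One bookkeeping caveat: Fourier--Mukai duality pairs sheaves on a torsor under an abelian variety $B$ with gerbe-twisted sheaves on the dual $B^{\dual}$, so your fibrewise matching of a torsor under $(A^{\dual})^{(1)}$ with a slice of $T^*(A^{\dual})^{(1)}$ (rather than a slice of $T^*A^{(1)}$) conflates $A$ with $A^{\dual}$; this slippage is already present in the statement as the paper formulates it (the cleaner general form is $D_{qcoh}(A^{\sharp}) \cong D_{qcoh}(A,D_A)$), and it is harmless here only because the paper applies the theorem exclusively to elliptic curves, where $E \cong E^{\dual}$ canonically.
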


\begin{rmk}
In this paper we will only be interested in the case where $A = E$ is an elliptic curve. This special case is also covered by \cite[Thm. 4.10(2)]{Bezrukavnikov:fr}. 
\end{rmk}

As before we start by relating torsion sheaves on the surface $\Loc(E,1) = E^{\sharp}$ with local systems on $E^{\dual}$. Although the next Proposition is completely analogous to Lemma \ref{lemma:admissible}, it is more powerful, since every local system is admissible due to the fact that every vector bundle on a curve supporting a flat connection has degree zero.

\begin{prop}\label{prop:torsion-local}
Assuming that $k$ is an algebraically closed field of characteristic zero, the equivalence $\Phi_{CFT}$ of Theorem \ref{thm:laumon-rothstein} induces an equivalence of stacks $$\Tt(E^{\sharp},n) \cong \Loc(E^{\dual},n),$$ relating length $n$ torsion sheaves on the surface $E^{\sharp}$ to rank $n$ local systems on $E^{\dual}$.
\end{prop}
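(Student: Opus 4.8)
The plan is to follow the proof of Lemma \ref{lemma:admissible} essentially verbatim, replacing the Fourier--Mukai transform of Proposition \ref{prop:equivariant} by the Laumon--Rothstein equivalence $\Phi_{CFT}$ of Theorem \ref{thm:laumon-rothstein}. First I would upgrade $\Phi_{CFT}$ to a transform in families: since $\Phi_{CFT}$ is an integral transform whose kernel is the Poincar\'e bundle on $E^{\sharp} \times E^{\dual}$ equipped with its universal relative connection, the base-change argument of Lemma \ref{lemma:basechange} applies and yields, for every $k$-scheme $T$, an equivalence $D_{qcoh}(E^{\sharp} \times T) \cong D_{qcoh}(E^{\dual} \times T, D_{E^{\dual}})$ relative to $T$. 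This is exactly the de Rham counterpart of the family version of the Dolbeault transform used at the start of Lemma \ref{lemma:admissible}.

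For the forward direction, let $\F$ be a $T$-point of $\Tt(E^{\sharp},n)$ and set $\nabla_{\F} := \Phi_{CFT}(\F)$, a priori a complex of $D_{E^{\dual}}$-modules on $E^{\dual} \times T$. I would compute its underlying $\Oo$-module by composing $\Phi_{CFT}$ with the functor forgetting the flat connection: just as the projection $\pi_*$ in Lemma \ref{lemma:admissible} reduced the Dolbeault transform to the plain Poincar\'e pushforward, here the forgetful functor identifies the underlying $\Oo$-module with $p_{E^{\dual},*}(p_{E^{\sharp}}^*\F \otimes \Pp)$, where $\Pp$ denotes the $\Oo$-module underlying the kernel, restricting to a degree zero line bundle on each $\{x\} \times E^{\dual}$. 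Since $\supp \F \to T$ is finite and $\Pp$ is a line bundle, this pushforward has no higher direct images and is locally free of rank $n$ on $E^{\dual} \times T$. Because the forgetful functor is exact and conservative, $\nabla_{\F}$ is therefore concentrated in a single degree and is an $\Oo$-coherent $D_{E^{\dual}}$-module, i.e.\ a $T$-family of rank $n$ local systems on $E^{\dual}$.

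For the converse I would invoke the remark preceding the Proposition. Because $\pi_1(E^{\dual})$ is abelian its irreducible local systems are of rank one, and flatness forces every such factor to have degree zero; hence every rank $n$ local system on $E^{\dual}$ is admissible in the sense of Definition \ref{defi:admissible}, admitting a composition series with rank one degree zero successive quotients. Encoding such a filtration in distinguished triangles and applying $\Phi_{CFT}^{-1}$, each rank one degree zero factor transforms to a length one skyscraper on $E^{\sharp}$ (the geometric class field theory duality identifies rank one local systems on $E^{\dual}$ with points of $E^{\sharp} = \Loc(E,1)$), so by induction on $n$ the local system is sent to a length $n$ torsion sheaf on $E^{\sharp}$. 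This is precisely where the de Rham situation is more transparent than the Dolbeault one of Lemma \ref{lemma:admissible}: no genericity or stability hypothesis is needed, since admissibility holds for every object.

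The hard part will be to make the family version of $\Phi_{CFT}$ and the computation of the underlying $\Oo$-module fully rigorous. One must check that $\Phi_{CFT}$ is genuinely of integral type with a kernel flat over $E^{\sharp}$, so that Lemma \ref{lemma:basechange} applies over an arbitrary base $T$, and that the functor forgetting the connection intertwines $\Phi_{CFT}$ with the naive Poincar\'e pushforward $p_{E^{\dual},*}(p_{E^{\sharp}}^*(-) \otimes \Pp)$. Granting these two points, the $t$-exactness on torsion sheaves---and hence the fact that the output is an honest vector bundle with connection rather than a genuine complex---follows from the finiteness of the support exactly as in Lemma \ref{lemma:admissible}.
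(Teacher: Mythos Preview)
Your proposal is correct and follows essentially the same route as the paper: upgrade $\Phi_{CFT}$ to families, compute the underlying $\Oo$-module via the forgetful functor and the finiteness of the support, and for the converse use a composition series with rank one degree zero factors.

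One small divergence worth noting: for the family version you invoke Lemma \ref{lemma:basechange}, but that lemma is formulated for Fourier--Mukai transforms between quasi-coherent derived categories of stacks, whereas the target here is a $D$-module category. The paper sidesteps this by instead appealing to the $\infty$-categorical tensor product result of Ben-Zvi--Francis--Nadler (their Proposition 4.1) together with the fact that the Laumon--Rothstein equivalence lifts to the stable $\infty$-categorical enhancements; this gives the family equivalence $D_{qcoh}(A^{\sharp}\times S) \cong D_{qcoh}(A^{\dual}\times S, p_{A^{\dual}}^*D_{A^{\dual}})$ directly, without needing to verify that $\Phi_{CFT}$ fits the literal hypotheses of Lemma \ref{lemma:basechange}. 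You already flagged this as the ``hard part,'' so you are aware of the issue; the paper's resolution is simply to change the categorical machinery rather than to force the integral-kernel picture through. Your explicit remark that every local system on $E^{\dual}$ is admissible (because $\pi_1$ is abelian, so irreducibles have rank one) is in fact a bit more careful than the paper, which only invokes the degree-zero observation stated just before the Proposition.
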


\begin{proof}
Let us denote by $S$ an affine scheme, $X$ an arbitrary smooth scheme and by $D_{qcoh}(X \times S,p_X^*D_X)$ the derived category of $p_X^*D_X$-modules, where $p_X: X \times S \to X$ is the canonical projection. Objects of this category should be thought of as $S$-families of complexes of $D_X$-modules. It is clear that we also have an equivalence of derived categories $$D_{qcoh}(A^{\sharp}\times S) \cong D_{qcoh}(A^{\dual}\times S,p_{A^{\dual}}^*D_{A^{\dual}}),$$ as it follows for instance from Proposition 4.1 in \cite{MR2669705} and the fact that the above equivalence of Laumon and Rothstein can be lifted to the canonical enhancements as stable $\infty$-categories.

Using the forgetful functor $$\Psi: D_{qcoh}(E^{\dual},D_{E^{\dual}}) \to D_{qcoh}(E^{\dual})$$ we can describe the underlying quasi-coherent sheaf $(\Psi \circ \Phi_{CFT})(\F)$ as the Fourier-Mukai transform $$D_{qcoh}(E^{\sharp}) \to D_{qcoh}(E^{\dual})$$ with integral kernel given by the universal flat connection $\Ll$ on $E^{\sharp} \times E$. As in the proof of Lemma \ref{lemma:admissible} we obtain therefore that $\Phi_{CFT}(\F)$ is a complex of a family of $D$-modules concentrated in a single degree.

Vice versa starting with a family of local systems $(V,\nabla)$ on $E^{\dual}$ we see from the existence of a composition series for $(V,\nabla)$ as in the proof of lemma \ref{lemma:admissible} that $\Phi_{CFT}^{-1}(V)$ is a torsion sheaf on $E^{\sharp}$.
\end{proof}

From a complex analytic viewpoint, Proposition \ref{prop:torsion-local} seems natural: since $\pi_1(E) \cong \mathbb{Z}^2$, the Riemann-Hilbert correspondence implies that $\Loc(E,n)$ is complex analytically isomorphic to the quotient stack $$[\{(A,B) \in \GL_n \times \GL_n|AB = BA\}/\GL_n].$$ This algebraic quotient stack in turn is equivalent to $\Tt(\mathbb{C}^{\times} \times \mathbb{C}^{\times},n)$, as we record below.

\begin{rmk}
Let $k$ be an algebraically closed field. There exists a canonical equivalence of stacks $$\Tt(\G_m^r \times \mathbb{A}^s,n) \cong [\{(A_1,\dots,A_r, A_{r+1},\dots, A_{r+s}) \in \GL_n^r \times \mathfrak{gl}_n^s|[A_i,A_j]=0 \; \forall\;(i,j)\}/\GL_n],$$ where $\GL_{n}$ acts by conjugation on this variety of matrices.
\end{rmk}

\begin{proof}
The data of a length $n$ torsion sheaf on $\G_m^r \times \mathbb{A}^s$ is equivalent to a rank $n$ $k$-vector space $V$, endowed with the structure of a $k[X_1^{\pm 1},\dots,X_r^{\pm 1},X_{r+1},\dots,X_{r+s}]$-module. This in turn is tautologically the same thing as a $k$-vector space $V$ together with $r+s$ pairwise commuting endomorphisms $(A_i)_{i=1,\dots r+s}$, such that $\det A_i \neq 0$ for $i \leq r$. As the same statements hold in families, we conclude the proof of the assertion.
\end{proof}

On the other hand, the surface $\mathbb{C}^{\times} \times \mathbb{C}^{\times}$ is complex analytically equivalent to $E^{\sharp}$, which induces an isomorphism of complex analytic stacks $$\Tt(E^{\sharp},n) \cong \Tt(\mathbb{C}^{\times} \times \mathbb{C}^{\times},n) \cong \Loc(E,n).$$

Proposition \ref{prop:torsion-local} allows us to prove a version of Theorem \ref{thm:toyexamples} for moduli spaces of parabolic local systems, by the exact same methods.

\begin{thm}\label{thm:flattoyexamples}
Let $k$ be an algebraically closed field of characteristic zero or $p > |\Gamma|$. The moduli space of stable parabolic local systems $\ssLoc(Q,{\lambda})$ of orbifold degree zero, associated to the orbifold $[E/\Gamma]$ with the same weights as in Theorem \ref{thm:toyexamples}, is naturally isomorphic to the $\Gamma$-Hilbert scheme of the surface $E^{\sharp}$.
\end{thm}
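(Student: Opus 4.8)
The plan is to follow the proof of Theorem \ref{thm:toyexamples} essentially line by line, replacing the Fourier--Mukai transform of the Hitchin system by the categorified class field theory equivalence $\Phi_{CFT}$ of Theorem \ref{thm:laumon-rothstein} (respectively Theorem \ref{thm:chen-zhu} in positive characteristic), and Lemma \ref{lemma:admissible} by its de Rham counterpart Proposition \ref{prop:torsion-local}. As in the Dolbeault case I would first record that, since $E$ carries a $\Gamma$-action, the same functoriality of duality that made Mukai's transform $\Gamma$-equivariant (the discussion preceding Proposition \ref{prop:equivariant}) makes $\Phi_{CFT}$ equivariant as well; invoking Lemma \ref{lemma:equivariant_stable} it then descends to an equivalence of stacks $\Tt([E^{\sharp}/\Gamma],n) \cong \Loc([E^{\dual}/\Gamma],n)$ relating $\Gamma$-equivariant torsion sheaves on the surface $E^{\sharp}$ to $\Gamma$-equivariant local systems on $E^{\dual}$.

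Next I would unwind the $\Gamma$-Hilbert scheme $Y$ of $E^{\sharp}$ in terms of $\Tt$ exactly as before: an $S$-point is a torsion sheaf $\F$ together with a $\Gamma$-equivariant surjection $s\colon \Oo_{[E^{\sharp}/\Gamma]\times S} \twoheadrightarrow \F$ whose space of sections is the regular representation. The decisive computation is to identify the image of the structure sheaf of the surface under $\Phi_{CFT}$: I expect $\Phi_{CFT}(\Oo_{E^{\sharp}})$ to be the $D$-module supported at the origin $0 \in E^{\dual}$, the de Rham analogue of the class $\Oo_{T^*_0 E^{\dual}}[-1]$ appearing in the Dolbeault argument. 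Granting this, the transform $v := \Phi_{CFT}(s)$ becomes a morphism out of this delta-type $D$-module, and the appropriate duality identifies it with a vector in the fibre at $0$ of the underlying local system. As in Theorem \ref{thm:toyexamples}, Lemma \ref{lemma:isotropy-representation} translates the regular-representation condition on the fibre at the orbifold point into a complete flag, so that $Y$ is identified with the moduli stack of data $(\Ee,\nabla,v)$, where $(\Ee,\nabla)$ is a local system on $[E^{\dual}/\Gamma]$ with $\Ee_0$ the regular representation and $v \in \Ee_0^{\Gamma}$ a non-zero invariant vector; by Proposition \ref{prop:orbi-parabolic} this is precisely a parabolic local system of the type prescribed by the weights of Theorem \ref{thm:toyexamples}. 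Stability is checked exactly as there, using that every $\Gamma$-invariant subbundle has orbifold slope $\leq 0$ while the transform is the unique degree-zero subobject containing $v$, the anomalous weight $\alpha_{n-1}$ breaking the remaining ties.

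The step that simplifies dramatically is the passage from an inclusion of moduli to a genuine isomorphism. In the Dolbeault setting one had to run a properness-plus-dimension argument over $\A/\Gamma$ to force every stable Higgs bundle to be admissible. Here admissibility is automatic: every vector bundle on $E^{\dual}$ supporting a flat connection has degree zero, and, $\pi_1(E^{\dual})$ being abelian, the irreducible local systems on $E^{\dual}$ are exactly the rank-one ones, so every local system on $E^{\dual}$ admits a composition series with rank-one degree-zero factors in the sense of Definition \ref{defi:admissible}. Consequently $\ssLoc(Q,\lambda)$ consists entirely of admissible objects, every stable parabolic local system lies in the image of $\Phi_{CFT}^{-1}$, and the identification of stacks above upgrades directly to an isomorphism $Y \cong \ssLoc(Q,\lambda)$, with inverse furnished by $\Phi_{CFT}^{-1}$; no properness argument is required.

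The main obstacle I anticipate is the computation of $\Phi_{CFT}(\Oo_{E^{\sharp}})$ together with the duality producing the framing vector. The clean Serre-duality statement $\Oo_{T^*E}\mapsto \Oo_{T^*_0 E^{\dual}}[-1]$ available in the Dolbeault case must be replaced by its counterpart for the Laumon--Rothstein transform, where the target now lives in the category of $D$-modules; one must verify that $\Hom$ out of the delta $D$-module at the origin recovers the fibre of the local system there, so that the surjection $s$ really does transform into the required non-zero vector. Everything downstream --- the translation of equivariant into parabolic data and the stability bookkeeping --- is then formally identical to the proof of Theorem \ref{thm:toyexamples}.
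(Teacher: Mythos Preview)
Your proposal is correct and matches the paper's own approach exactly: the paper states no separate proof, remarking only that ``Proposition \ref{prop:torsion-local} allows us to prove a version of Theorem \ref{thm:toyexamples} for moduli spaces of parabolic local systems, by the exact same methods.'' Your observation that the properness-over-$\A/\Gamma$ step becomes unnecessary --- because every local system on $E^{\dual}$ is automatically admissible --- is precisely the simplification the paper flags just before Proposition \ref{prop:torsion-local} when it calls that proposition ``more powerful'' than Lemma \ref{lemma:admissible}.
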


\subsection{Derived equivalences}

In Proposition \ref{prop:equivariant} we have shown that there is a derived equivalence $$D^b_{coh}([T^*E^{\dual}/\Gamma]) \cong D^b_{coh}([T^*E/\Gamma]).$$ Using Theorem \ref{thm:toyexamples} and the derived equivalence of Theorem \ref{thm:BKR} we arrive at a string of equivalences $$D^b_{coh}(\M) \cong D^b_{coh}([T^*E^{\dual}/\Gamma]) \cong D^b_{coh}([T^*E/\Gamma]) \cong D^b_{coh}(\M^{\dual}),$$ where $\M$ and $\M^{\dual}$ denote the respective moduli spaces of parabolic Higgs bundles. 

\begin{thm}\label{thm:toyautoduality}
Let $\M := \Higgs(\widehat{[E/\Gamma]},Q)$ denote the moduli space over the Hitchin base $\A$ studied in subsection \ref{ssec:crepant}.  We have a natural equivalence of derived categories of Fourier-Mukai type
$$\Phi: D^b_{coh}(\M) \cong D^b_{coh}(\M^{\dual}),$$
relative to $\A$,
extending the Fourier-Mukai transform for dual abelian varieties over the locus $\A^{\sm}$. The corresponding Fourier-Mukai kernel is given by a Cohen-Macaulay sheaf $\Pb$ on the fibre product $\M \times_{\A} \M^{\dual}$.
\end{thm}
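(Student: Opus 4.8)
Proof plan.

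The plan is to realise $\Phi$ as the composition of the three Fourier--Mukai equivalences displayed just before the statement, and then to identify the resulting kernel explicitly. By Theorem \ref{thm:toyexamples} we have canonical isomorphisms $\M \cong \Hilb^{\Gamma}(T^*E^{\dual})$ and $\M^{\dual} \cong \Hilb^{\Gamma}(T^*E)$, both fibred over $\A$ through the composition of the Hilbert--Chow morphism with the Hitchin map. The three constituents are: the McKay equivalence $D^b_{coh}(\M) \cong D^b_{coh}([T^*E^{\dual}/\Gamma])$ of Theorem \ref{thm:BKR}, with kernel the structure sheaf $\Oo_{\Z_1}$ of the universal $\Gamma$-cluster $\Z_1 \subset \M \times [T^*E^{\dual}/\Gamma]$; the equivariant Poincar\'e equivalence $D^b_{coh}([T^*E^{\dual}/\Gamma]) \cong D^b_{coh}([T^*E/\Gamma])$ of Proposition \ref{prop:equivariant}, whose kernel $\mathcal{P}$ is the descent of the Poincar\'e line bundle; and the inverse McKay equivalence $D^b_{coh}([T^*E/\Gamma]) \cong D^b_{coh}(\M^{\dual})$, with kernel supported on the universal cluster $\Z_2 \subset [T^*E/\Gamma] \times \M^{\dual}$. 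The hypotheses of Theorem \ref{thm:BKR} are met: the Hilbert schemes are the smooth moduli spaces of Theorem \ref{thm:toyexamples}, and the estimate $\dim (\M \times_{T^*E^{\dual}/\Gamma} \M) \le \dim \M + 1$ holds because the Hilbert--Chow morphism resolves the surface $T^*E^{\dual}/\Gamma$ crepantly with at most one-dimensional fibres over its finitely many singular points, exactly as in Corollary \ref{cor:Hilbert}. Each equivalence is of Fourier--Mukai type relative to $\A$ by Lemma \ref{lemma:linear} (the two McKay kernels) and Proposition \ref{prop:equivariant} (the kernel $\mathcal{P}$), so Lemma \ref{lemma:composition} together with Lemma \ref{lemma:changeofbase} yields the desired equivalence $\Phi$, relative to $\A$, with kernel $\Pb$ supported on $\M \times_{\A} \M^{\dual}$.

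To see that $\Phi$ extends the abelian Fourier--Mukai transform over $\A^{\sm}$, I would restrict to the open locus on which the $\Gamma$-action on $T^*E$ is free. There the $\Gamma$-Hilbert schemes coincide with the (smooth) GIT quotients and both McKay kernels reduce to the structure sheaves of free orbits, so that $\Phi$ restricts to the Poincar\'e equivalence of Proposition \ref{prop:equivariant}; fibrewise over $a \in \A^{\sm}$ the Hitchin fibres of $\M$ and $\M^{\dual}$ are dual elliptic curves and $\mathcal{P}$ restricts to their Poincar\'e bundle, which is Mukai's transform (\ref{eqn:mukai}).

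The heart of the matter is to compute $\Pb$ as a convolution and to check that it is a Cohen--Macaulay sheaf. The decisive point is that each McKay kernel is \emph{finite flat} over its Hilbert-scheme factor. Convolving $\Oo_{\Z_1}$ with the line bundle $\mathcal{P}$ over $\A$ involves only the finite flat morphism $\Z_1 \to \M$, so neither higher direct images nor Tor-sheaves occur; the result $G := \Oo_{\Z_1} * \mathcal{P}$ is the push-forward of a line bundle along the finite flat morphism $\Z_1 \times_{\A} [T^*E/\Gamma] \to \M \times_{\A} [T^*E/\Gamma]$ (a base change of $\Z_1 \to \M$), hence is locally free of rank $|\Gamma|$ on $\M \times_{\A} [T^*E/\Gamma]$. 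Convolving the vector bundle $G$ with $\Oo_{\Z_2}$ is again governed by a finite flat morphism, namely $\Z_2 \to \M^{\dual}$: since $G$ is locally free the derived tensor product is an ordinary tensor product, supported on $\M \times_{\A} \Z_2$, and the push-forward to $\M \times_{\A} \M^{\dual}$ is finite. Consequently $\Pb := G * \Oo_{\Z_2}$ is a genuine sheaf, the finite push-forward of a locally free sheaf on $\M \times_{\A} \Z_2$.

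Cohen--Macaulayness then follows by tracking depth along the two finite flat morphisms. Being finite flat over the smooth varieties $\M$ and $\M^{\dual}$, the clusters $\Z_1$ and $\Z_2$ are Cohen--Macaulay; a locally free sheaf on a Cohen--Macaulay scheme is Cohen--Macaulay, and a finite push-forward preserves the property, so it remains only to see that the fibre products $\M \times_{\A} [T^*E/\Gamma]$, $\M \times_{\A} \Z_2$ and $\M \times_{\A} \M^{\dual}$ are Cohen--Macaulay. By miracle flatness the Hitchin maps $\M \to \A$ and $\M^{\dual} \to \A$ are flat (source Cohen--Macaulay, target the smooth curve $\A$, fibres equidimensional), whence $\M \times_{\A} \M^{\dual} \to \M$ is flat with one-dimensional Hitchin fibres, and the remaining fibre products arise from it by finite flat base change. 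I expect the main obstacle to be precisely this last bookkeeping: one must verify that the Hitchin fibres entering the construction are Cohen--Macaulay curves — transparent over $\A^{\sm}$, but requiring attention at the finitely many singular values of $\A$ — and that the relative-to-$\A$ supports are cut out in the expected codimension, so that no unexpected Tor-sheaves contaminate the two convolutions.
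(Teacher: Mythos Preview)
Your overall architecture---compose the two McKay equivalences with the equivariant Poincar\'e equivalence, check relative-to-$\A$ via Lemma~\ref{lemma:linear}, and verify the restriction over $\A^{\sm}$ by noting that the McKay maps become isomorphisms there---matches the paper's proof exactly.

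The Cohen--Macaulay argument, however, has a genuine gap. You assert that the equivariant Poincar\'e kernel $\mathcal{P}$ is a \emph{line bundle} on $[T^*E^{\dual}/\Gamma]\times_{\A}[T^*E/\Gamma]$ and deduce that $G=\Oo_{\Z_1}*\mathcal{P}$ is locally free of rank $|\Gamma|$. This is not right: the Poincar\'e bundle descends to a line bundle on the fibre product over the \emph{stacky} quotient $[\A/\Gamma]$, namely $[(T^*E^{\dual}\times_{\A}T^*E)/\Gamma_{\mathrm{diag}}]$, but the Hitchin base is the GIT quotient $\A/\Gamma$ (to which $\M$ maps), and the comparison map between the two fibre products is finite of degree $|\Gamma|$ away from the fixed locus. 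Pushing forward destroys the line-bundle property. In fact $G$ is generically of rank~$1$, not $|\Gamma|$: for a free $\Gamma$-orbit the Fourier--Mukai transform is a rank-$|\Gamma|$ Higgs bundle on $[E/\Gamma]$, whose BNR sheaf is rank~$1$ on its spectral curve, which coincides with the Hitchin fibre of $[T^*E/\Gamma]$. So your finite-flat push-forward computation of $G$ does not go through as written.

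The paper sidesteps this by invoking the moduli interpretation proved in Theorem~\ref{thm:toyexamples}: the composite of the first McKay equivalence with the Poincar\'e equivalence sends $\Delta_*\Oo_{\M^{\dual}}$ precisely to the universal family $\Qb$ of Higgs orbibundles on $\M^{\dual}\times[T^*E/\Gamma]$. Cohen--Macaulayness of $\Qb$ is then immediate, not from local freeness on the fibre product, but because $(\id\times\pi)_*\Qb$ is the underlying vector bundle of the universal family on $\M^{\dual}\times[E/\Gamma]$ and $\pi$ is finite on the support. For the remaining McKay step the paper writes the functor as $Rq_*(Lp^*(-)\otimes\mathcal{K})$ with $q$ finite and $\mathcal{K}$ a line bundle on $\Z$, and then appeals to \cite[Lemma~2.3]{Arinkin:2010uq} (Tor-finite pullback of a CM sheaf to a CM scheme is CM), checking that $\M\times_{\A}[T^*E/\Gamma]$ and $\M\times_{\A}\M$ are local complete intersections and that $\M\times_{\A}\Z$ is CM by finiteness over the latter. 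This is exactly the bookkeeping you flag at the end, but the paper handles it via l.c.i.\ rather than miracle flatness, and crucially it never needs the stronger (and false) local-freeness claim for the intermediate kernel.
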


\begin{proof}
This is an equivalence of Fourier-Mukai type relative to $\A$ by construction. Therefore we only need to verify the second assertion, namely that the integral kernel $\Pb$ restricts to the Fourier-Mukai transform associated to the Poincar\'e bundle $\Pp$ on $$\M^{\sm} \times_{\A^{\sm}} \M^{\dual,\sm}.$$ Over the smooth locus $\A^{\sm}$ the two morphisms
\begin{center}
\begin{tikzpicture}
\node at (0,1.5) {$\M$};
\node at (2,0) {$T^*E^{\dual}/\Gamma$};
\node at (4.4,1.5) {$[T^*E^{\dual}/\Gamma]$};

\draw [->,thick] (0.3,1.2) -- (1.6,0.3);
\draw [->,thick] (4.1,1.2) -- (2.3, 0.3);
\end{tikzpicture}
\end{center}
are actually isomorphisms and the restriction of the equivalence of Theorem \ref{thm:BKR} to the smooth locus (which is possible because of Lemma \ref{lemma:linear}) is the equivalence induced from this isomorphism. \'Etale locally on $\A^{\sm}$ we may identify the relative abelian variety given by the Hitchin fibration with $E^{\dual}$. We see that the equivalence in question is just Fourier-Mukai duality for the abelian variety $E$.

To verify the last assertion we need to show that the equivalence $D^b_{coh}(\M^{\dual}) \cong D^b_{coh}(\M)$ sends the $\M^{\dual}$-family of quasi-coherent sheaves on $\M^{\dual}$ given by the structure sheaf of the diagonal $\Delta_*\Oo_{\M^{\dual}}$ to a Cohen-Macaulay sheaf. This equivalence can be divided into several steps $$D^b_{coh}(\M) \cong D^b_{coh}([T^*E^{\dual}/\Gamma]) \cong D^b_{coh}([T^*E/\Gamma]) \cong D^b_{coh}(\M^{\dual}).$$ According to Theorem \ref{thm:toyexamples}, the composition of the first two equivalences send $\Delta_*\Oo_{\M^{\dual}}$ to the universal family $\Qb$ of Higgs orbibundles on $\M^{\dual} \times [T^*E/\Gamma]$. If $$\pi: [T^*E/\Gamma] \to [E/\Gamma]$$ denotes the canonical projection, we have that $$(\id_{\M} \times \pi)_*\Qb$$ the $\M$-family of vector bundles underlying the universal family of Higgs bundles $\Qb$. In particular, since $\pi: \supp \Qb \to [E/\Gamma]$ is finite, we see that $\Qb$ is Cohen-Macaulay. Therefore we need to show that the equivalence $\Psi: D^b_{coh}([T^*E/\Gamma]) \cong D^b_{coh}(\M^{\dual})$ sends $\Qb$ to a Cohen-Macaulay sheaf $\Pb$ on $\M \times_{\A} \M^{\dual}$. 
\begin{center}
\begin{tikzpicture}
\node at (1.5,1.5) {$\Z$};
\node at (0,0) {$[T^*E/\Gamma]$};
\node at (2.9,0) {$\M^{\dual}$};

\node at (0.5,0.9) {$p$};
\node at (2.4,0.9) {$q$};

\draw [->,thick] (1.3,1.3) -- (0.3,0.3);
\draw [->,thick] (1.7,1.3) -- (2.7,0.3);
\end{tikzpicture}
\end{center}
The universal $\Gamma$-cluster is endowed with a line bundle $\mathcal{K}$ and $\Psi$ can be written as $$Rq_*(Lp^*- \otimes^L \mathcal{K} ).$$ Because $q$ is a finite morphism and $\mathcal{K}$ is a line bundle, $$\Psi(\Qb) = R(\id \times q)_*(L(\id \times p)^*\Qb \otimes^L \mathcal{K})$$ is Cohen-Macaulay if and only if $Lp^*\Qb$ is Cohen-Macaulay. Lemma 2.3 of \cite{Arinkin:2010uq} implies Cohen-Macaulyness of this pullback, if $\M \times_{\A} [T^*E/\Gamma]$ is Gorenstein, $(\id \times p)$ is Tor-finite and $\M \times_{\A} \Z$ is Cohen-Macaulay. Tor-finiteness of $p$ follows from smoothness of $\M$ and is preserved by base change along a flat morphism. The two fibre products $\M \times_{\A} [T^*E/\Gamma]$ and $\M \times_{\A} \M$ are locally complete intersections (see tags 01UH, 01UI in \cite{stacks-project}), and $$ \M \times_{\A} \Z \to \M \times_{\A} \M$$ is a finite morphism, which implies Cohen-Macaulayness of $\M \times_{\A} \Z$.
\end{proof}

We obtain a similar result for moduli spaces of flat connections, which should be seen as an instance of the Geometric Langlands correspondence.

\begin{thm}
Let $\ssLoc(\widehat{[E/\Gamma]},Q,\lambda)$ denote the moduli space of local systems studied in subsection \ref{ssec:crepant}.  We have a natural equivalence of derived categories
$$\Phi_{GL}: D_{qcoh}(\ssLoc(\widehat{[E/\Gamma]},Q,\lambda)) \cong D_{qcoh}([E^{\dual}/\Gamma],D_{[E^{\dual}/\Gamma]}).$$
\end{thm}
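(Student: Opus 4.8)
The plan is to mimic the proof of Theorem \ref{thm:toyautoduality}, substituting the Fourier-Mukai duality of Proposition \ref{prop:equivariant} by the geometric class field theory equivalence of Laumon--Rothstein (Theorem \ref{thm:laumon-rothstein}) in characteristic zero, respectively its positive-characteristic counterpart of Chen--Zhu (Theorem \ref{thm:chen-zhu}). Concretely I would realise $\Phi_{GL}$ as the composite
$$D_{qcoh}(\ssLoc(\widehat{[E/\Gamma]},Q,\lambda)) \cong D_{qcoh}([E^{\sharp}/\Gamma]) \cong D_{qcoh}([E^{\dual}/\Gamma],D_{[E^{\dual}/\Gamma]}),$$
in which the middle term plays the role taken by $D^b_{coh}([T^*E/\Gamma])$ in the Dolbeault picture.

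First I would invoke Theorem \ref{thm:flattoyexamples} to identify $\ssLoc(\widehat{[E/\Gamma]},Q,\lambda)$ with the $\Gamma$-Hilbert scheme $Y$ of the surface $E^{\sharp}$, and then apply the derived McKay correspondence (Theorem \ref{thm:BKR}). Its hypotheses are verified exactly as in the Dolbeault case: $Y$ is a smooth (crepant) resolution of the surface quotient, and the required estimate $\dim Y \times_{E^{\sharp}/\Gamma} Y \leq \dim Y + 1$ follows from the surface dimension bound recalled after Corollary \ref{cor:Hilbert}. Since every Fourier-Mukai functor lifts to the associated stable $\infty$-categories, this equivalence persists after passing from $D^b_{coh}$ to the unbounded $D_{qcoh}$, giving the first equivalence in the display.

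The substantive step is the second equivalence. By Proposition \ref{prop:torsion-local}, and more precisely by the lift of $\Phi_{CFT}$ to an equivalence of stable $\infty$-categories already used there, the class field theory equivalence $D_{qcoh}(E^{\sharp}) \cong D_{qcoh}(E^{\dual},D_{E^{\dual}})$ is $\infty$-categorical. Just as for Mukai's transform in the discussion preceding Proposition \ref{prop:equivariant}, the essential point is that it is $\Gamma$-equivariant: the assignment $E \mapsto (E^{\sharp},E^{\dual},\Ll)$ is functorial in $E$, so the $\Gamma$-action on $E$ supplies compatible $\Gamma$-actions on $E^{\sharp}$ and $E^{\dual}$ together with a $\Gamma$-equivariant structure on the universal flat connection $\Ll$ serving as integral kernel. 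Granting this, the formal descent underlying Lemma \ref{lemma:equivariant_stable} (which, as explained in the paragraph following that lemma, realises the $\infty$-category of $\Gamma$-equivariant objects as a homotopy limit over $B\Gamma$ and is therefore an invariant of the equivariant equivalence class) identifies the $\Gamma$-equivariant objects on both sides. On the source these are $QC([E^{\sharp}/\Gamma])$, while on the target, $\Gamma$ being finite and acting through \'etale maps by Assumption \ref{ass}, the $\Gamma$-equivariant $D_{E^{\dual}}$-modules are precisely the $D$-modules on the quotient stack $[E^{\dual}/\Gamma]$.

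The main obstacle I anticipate is making the $\Gamma$-equivariance of $\Phi_{CFT}$ precise enough to feed into the descent: one must produce not merely abstractly compatible $\Gamma$-actions on $E^{\sharp}$ and $E^{\dual}$, but an honest equivariant structure on the integral kernel of $\Phi_{CFT}$, realising it as pulled back from the product of the quotient stacks. In the Dolbeault setting this was immediate from the functoriality of the Poincar\'e bundle under the biduality isomorphism; here the corresponding input is the naturality of the Laumon--Rothstein and Chen--Zhu constructions of the universal $D$-module under automorphisms of $E$, which I would check by hand. A secondary point requiring care is confirming that ``$D$-modules on the stack $[E^{\dual}/\Gamma]$'' as it appears in the statement agrees with the category of $\Gamma$-equivariant $D_{E^{\dual}}$-modules produced by descent; this compatibility is exactly where Assumption \ref{ass} on the characteristic enters.
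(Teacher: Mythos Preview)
Your proposal is correct and matches the paper's approach exactly: the paper states this theorem without proof, implicitly relying on the same argument as Theorem \ref{thm:toyautoduality} with the Laumon--Rothstein/Chen--Zhu equivalence replacing Mukai's Fourier transform, and you have spelled out precisely this substitution together with the two technical points (equivariance of $\Phi_{CFT}$ and the identification of $\Gamma$-equivariant $D$-modules with $D$-modules on the quotient) that need checking.
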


\section{Hilbert schemes, Higgs bundles and local systems}

If $Q$ is a graph with a marked vertex $v$, we denote by $Q'$ the quiver obtained by adjoining an extra edge, linking $v$ with a new vertex $v'$. If $\lambda$ is a dimension vector for $Q$, we denote by $\lambda'$ the dimension vector satisfying $$\lambda'|Q = \lambda$$ and $\lambda'(v') = 1$.

If $Q$ is a Dynkin diagram, then the associated affine Dynkin diagram $\widetilde{Q}$ has a marked vertex $v$, called the affine vertex. In this section we discuss the geometric analogue for moduli spaces of Higgs bundles and local systems of the transition $$\widetilde{Q} \rightsquigarrow \widetilde{Q}'.$$

\subsection{Hilbert schemes as moduli spaces}

\begin{thm}\label{thm:Hilbert}
Let $k$ be an algebraically closed field of characteristic zero or $p > \max(|\Gamma|,n)$. We denote by $\M$ the moduli spaces of parabolic Higgs bundles $\ssHiggs(Q,{\lambda})$ from Theorem \ref{thm:toyexamples}. Then the Hilbert scheme $\M^{[n]}$ is again a moduli space of Higgs bundles. More precisely, we have $$\M^{[n]} \cong \ssHiggs(\widetilde{Q}',{(n\lambda)}'),$$ where the weights are $\alpha_i := \frac{i}{n}$ for $i < n$ and $1 > \alpha_n > \frac{n-1}{n}$; and the orbifold degree is zero. The Hitchin map $\M^{[n]} \to \A_n$ factors through the Hilbert-Chow map $$\M^{[n]} \to \M^{(n)} \to \A_1^{(n)} = \A_n,$$ where $\M^{(n)} \to \A_1^{(n)}$ is the map induced by $\M^n \to \A_1^n$.
\end{thm}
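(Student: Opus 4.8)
The plan is to assemble the result from three inputs already developed in the paper: the identification of $\M$ with the $\Gamma$-Hilbert scheme of $T^*E$ (Theorem \ref{thm:toyexamples}), Haiman's identification of the Hilbert scheme of points of a surface with the symmetric-group equivariant Hilbert scheme (the Haiman Theorem quoted after Lemma \ref{lemma:linear}), and the machinery relating $\Gamma$-Hilbert schemes to moduli of admissible Higgs bundles via Fourier--Mukai transform. First I would observe that by Theorem \ref{thm:toyexamples} we have $\M \cong \Hilb^{|\Gamma|}[T^*E/\Gamma]$, the $\Gamma$-Hilbert scheme of the surface $S := T^*E$. Since $\M$ is itself a smooth surface, Haiman's theorem gives a natural isomorphism $$\M^{[n]} \cong Y_n,$$ where $Y_n$ is the $S_n$-Hilbert scheme of $\M^n$ with respect to the permutation action. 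The strategy is then to describe $Y_n$ directly as a moduli space of parabolic Higgs bundles attached to the extended quiver $\widetilde{Q}'$.

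The key step is to identify the relevant finite group acting on $(T^*E)^n$ and reinterpret the iterated Hilbert scheme as a single equivariant Hilbert scheme. Writing $\M = \Hilb^{|\Gamma|}[T^*E/\Gamma]$, one expects $\M^{[n]}$ to be realized as the $(\Gamma \wr S_n)$-Hilbert scheme of $(T^*E)^n$, where $\Gamma \wr S_n = \Gamma^n \rtimes S_n$ is the wreath product; intuitively the $n$ clusters, each a free $\Gamma$-orbit's worth of points, are permuted by $S_n$ while each factor carries its own $\Gamma$-action. I would make this precise by combining the Haiman isomorphism with the functorial description of points of the $\Gamma$-Hilbert scheme developed in the proof of Theorem \ref{thm:toyexamples}: a point of $Y_n$ corresponds to an $S_n$-equivariant cluster on $\M^n$, and via Theorem \ref{thm:toyexamples} each factor's data translates into an admissible $\Gamma$-equivariant Higgs bundle on $E^{\dual}$ of rank $|\Gamma|$ together with a cyclic vector. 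Taking the direct sum over the $n$ factors and accounting for the $S_n$-equivariant cluster structure produces an admissible Higgs bundle of rank $n|\Gamma|$ on $E^{\dual}$ whose invariant-fibre data encodes precisely the parabolic structure dictated by the dimension vector $(n\lambda)'$ and the extra vertex $v'$ with $\lambda'(v') = 1$.

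The technical heart of the argument will be to match the numerical and stability data: I must verify that the parabolic type read off from the $(\Gamma \wr S_n)$-equivariant structure on $(T^*E)^n$ is exactly the one specified by $\widetilde{Q}'$ with dimension vector $(n\lambda)'$, and that the weights $\alpha_i = i/n$ for $i<n$ together with $1 > \alpha_n > (n-1)/n$ give the correct stability condition. Here the adjunction of the extra vertex $v'$ with multiplicity one should correspond to the cyclic-vector datum $v$ appearing in the proof of Theorem \ref{thm:toyexamples}, now globalized over the $n$ clusters; this is the geometric incarnation of the transition $\widetilde{Q} \rightsquigarrow \widetilde{Q}'$. I expect the stability verification to be the main obstacle: one must check, as in the final paragraphs of the proof of Theorem \ref{thm:toyexamples}, that admissibility forces all $\Gamma$-invariant (now $(\Gamma \wr S_n)$-invariant) subbundles to have orbifold slope $\le 0$ and that the transformed sheaf is the unique slope-zero subobject containing the cyclic vector, so that with the chosen weights every such Higgs bundle is stable. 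Because this passes through the Fourier--Mukai equivalence and a dimension-plus-connectedness argument analogous to the one closing Theorem \ref{thm:toyexamples}, I anticipate the bookkeeping of ranks, degrees, and the wreath-product representation theory to be delicate but ultimately routine.

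Finally, for the factorization of the Hitchin map I would argue as follows. The $\Gamma$-Hilbert scheme $\M$ carries a structural map $\M \to T^*E/\Gamma \to \A_1/\Gamma$ through the Hilbert--Chow morphism, as recorded in Theorem \ref{thm:toyexamples}. Passing to $n$ points, the Hilbert--Chow morphism $\M^{[n]} \to \M^{(n)}$ together with the symmetric power of the Hitchin base $\M^{(n)} \to \A_1^{(n)}$ yields the composite $$\M^{[n]} \to \M^{(n)} \to \A_1^{(n)} = \A_n,$$ and I would identify $\A_n$ with $\A_1^{(n)}$ by matching the characteristic-polynomial data of the rank $n|\Gamma|$ Higgs bundle with the unordered $n$-tuple of spectral data coming from the individual factors. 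Since the Fourier--Mukai transform of Theorem \ref{thm:toyexamples} is defined relative to $\A_1/\Gamma$ (by Lemma \ref{lemma:linear}), the resulting identification is compatible with the Hitchin morphisms on both sides, giving the claimed factorization through the Hilbert--Chow map.
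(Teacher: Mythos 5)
Your overall blueprint (reduce to Theorem \ref{thm:toyexamples}, transform cluster data into admissible Higgs bundles with a distinguished vector, convert that vector condition into stability, and close with a properness/equal-dimension/connectedness argument) is in the spirit of the paper, but the route you take has a genuine gap at its central step. You propose to identify $\M^{[n]}$, via Haiman's theorem, with the $S_n$-Hilbert scheme of $\M^n$ and then with the $(\Gamma\wr S_n)$-Hilbert scheme of $(T^*E)^n$, asserting that this second identification can be made precise by combining Haiman's isomorphism with the functorial description of $\Gamma$-clusters. It cannot, at least not routinely: a point of the $S_n$-Hilbert scheme of $\M^n$ is a subscheme of $\M^n$, whereas a point of the $(\Gamma\wr S_n)$-Hilbert scheme is a subscheme of $(T^*E)^n$, and there is no morphism between these ambient spaces along which clusters pull back or push forward (the resolution $\M^n$ maps to $(T^*E/\Gamma)^n$, not to $(T^*E)^n$). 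The two moduli problems visibly agree over the open locus of free orbits, but extending that identification across the boundary is exactly the hard content of such a statement: both sides are crepant resolutions of $(T^*E)^n/(\Gamma\wr S_n)$, and in dimension $2n\geq 4$ distinct crepant resolutions of the same singularity are common, so no soft argument can force them to coincide. Statements of the shape $\Hilb^n$ of a $\Gamma$-Hilbert scheme equals the $(\Gamma\wr S_n)$-Hilbert scheme are difficult theorems in their own right, not formal consequences of Haiman's result. The same generic-versus-global problem infects your next step: ``taking the direct sum over the $n$ factors'' of rank-$|\Gamma|$ Higgs bundles only makes sense when the cluster splits into $n$ disjoint $\Gamma$-clusters, i.e.\ on the open stratum; moreover the Fourier--Mukai transform of a $(\Gamma\wr S_n)$-equivariant sheaf on $(T^*E)^n$ lives on $(T^*E^{\dual})^n$, not on $[T^*E^{\dual}/\Gamma]$, so you would still need an additional, unspecified mechanism to produce a Higgs bundle on the orbicurve at all.

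The paper avoids both problems by never leaving the surface $\M$. Theorem \ref{thm:BKR} together with Theorem \ref{thm:toyautoduality} gives an equivalence $D^b_{coh}(\M) \cong D^b_{coh}([T^*E/\Gamma])$ relative to $\A_1$, and a point of $\M^{[n]}$ is literally a surjection $\Oo_{\M} \twoheadrightarrow \F$ onto a length-$n$ torsion sheaf on $\M$. Arguing as in Lemma \ref{lemma:admissible}, the transform of $\F$ is an admissible $\Gamma$-equivariant Higgs bundle of rank $n$, the section transforms into a nonzero vector $v$ in the flag, and surjectivity of $\Oo_{\M}\to\F$ is shown to be equivalent to the nonexistence of a proper degree-zero Higgs subbundle containing $v$, which for the stated weights is exactly stability; the proof then closes with the properness/dimension/connectedness argument you anticipated, which also forces every bundle in $\ssHiggs(\widetilde{Q}',(n\lambda)')$ to be admissible so that the inverse transform provides the inverse map. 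If you want to repair your write-up, replace the wreath-product detour by this direct transform of the quotient data $\Oo_{\M}\twoheadrightarrow\F$; Haiman's theorem enters the paper only later, for the derived autoduality of $\M^{[n]}$ in Theorem \ref{thm:autoduality}, not here.
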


In the case of $Q = \widetilde{A}_0$ this is a theorem of Gorsky--Nekrasov--Rubtsov (\cite[Sect. 5.1]{MR1859601}).

\begin{proof}
Theorem \ref{thm:BKR} and Theorem \ref{thm:toyautoduality} imply that we have an equivalence $$D^b_{coh}(\M) \cong D^b_{coh}([T^*E/\Gamma]),$$ defined relative to $\A_1$. In particular we can show as in Lemma \ref{lemma:admissible} that the moduli stack of length $n$ torsion sheaves on $\M$ is equivalent to the moduli stack of $\Gamma$-equivariant rank $n$ admissible Higgs bundles on $E$: $$\Tt(\M) \cong \Higgs([E/\Gamma],n).$$  As in the proof of Theorem \ref{thm:toyexamples} we see that under this equivalence a morphism $\Oo_{\M} \to \F$ corresponds to a linear map $\tau_*(V^{\dual} \otimes \Oo/L_0^{-1}) \to k$, where $(V,\theta)$ is the $\Gamma$-equivariant Higgs bundle associated to $\F \in \Tt$. But $$\tau_*(V^{\dual} \otimes \Oo/L_0^{-1}) \cong E^{\dual}_0/F^{\dual}_1. $$ In particular we obtain a non-trivial linear map $$k \to F_{n_0-1}$$ by dualizing, i.e. a nonzero vector $v \in F_{n_0-1}$.

We claim that the condition that $\Oo_{\M} \to \F$ is surjective, equivalent is to the fact that $(V,\theta,v)$ does not contain any degree zero Higgs subbundles containing $v$.

Let us assume that $\Oo_{\M}$ is surjective. If $(V,\theta,v)$ contains a non-trivial degree zero Higgs subbundle, which contains $v$, then there is a smallest such Higgs subbundle $(V',\theta,v)$ of rank $k < n$. In particular its transform $\Gg$ gives rise to a commutative diagram
\begin{center}
\begin{tikzpicture}
\node at (0,1.5) {$\Oo_{\M}$};
\node at (1.52,0) {$\Gg.$};
\node at (3,1.5) {$\F$};

\draw [->,thick] (0.3,1.2) -- (1.2,0.3);
\draw [->,thick] (0.4,1.5) -- (2.7,1.5);
\draw [->,thick] (1.8,0.3) -- (2.7,1.2);
\end{tikzpicture}
\end{center}
Because the horizontal arrow is surjective and $W$ is a length $k$ torsion sheaf, this is a contradition.

Similarly one shows that if $(V,\theta,v)$ does not contain a non-trivial degree zero Higgs subbundle containing $v$, then the corresponding morphism $\Oo_{\M} \to \F$ is surjective. Namely, if it is not surjective, its image gives rise to a non-trivial Higgs subbundle of $(V,\theta,v)$ containing $v$. Stability is checked as in the proof of Theorem \ref{thm:toyexamples}.

We obtain a morphism of $\A_n$-spaces $$\M^{[n]} \to \ssHiggs(\widetilde{Q}',{(n\lambda)}'),$$ as the type of the corresponding parabolic bundle can be checked for a single point, by connectivity of the moduli spaces, for instance over the locus of smooth spectral curves. Properness of the Hitchin morphism and the fact that both spaces have equal dimension and are connected, imply that this morphism is surjective. In particular we may conclude that every parabolic Higgs bundle in $\ssHiggs(\widetilde{Q}',{(n\lambda)}')$ is admissible. This implies that the above morphism is an isomorphism, with the inverse given by the inverse Fourier-Mukai transform.
\end{proof}

There is an analogous statement for moduli spaces of local systems that is proved by the same means.

\begin{thm}\label{thm:localHilbert}
Let $k$ be an algebraically closed field of characteristic zero or $p > \max(|\Gamma|,n)$. We denote by $\M$ one of the moduli spaces of parabolic local systems $\ssLoc(Q,{\lambda})$ from Theorem \ref{thm:flattoyexamples} defined over an algebraically closed field of zero characteristic. Then the Hilbert scheme $\M^{[n]}$ is again a moduli space of local systems. More precisely, we have $$\M^{[n]} \cong \ssLoc(\widetilde{Q}',{(n\lambda)}'),$$ where the weights are as in Theorem \ref{thm:Hilbert}, and the orbifold degree is fixed to be zero. 
\end{thm}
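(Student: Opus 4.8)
The plan is to transcribe the proof of Theorem~\ref{thm:Hilbert}, substituting the geometric class field theory transform for the Hitchin-system Fourier--Mukai transform at every step. By Theorem~\ref{thm:flattoyexamples} the surface $\M$ is the $\Gamma$-Hilbert scheme of $E^{\sharp}$; since $\M \to E^{\sharp}/\Gamma$ is a crepant resolution of a surface, its fibres are at most one-dimensional and the hypothesis $\dim \M \times_{E^{\sharp}/\Gamma} \M \leq \dim \M + 1$ of Theorem~\ref{thm:BKR} is satisfied. I would therefore begin from the resulting McKay equivalence $D^b_{coh}(\M) \cong D^b_{coh}([E^{\sharp}/\Gamma])$, which by Lemma~\ref{lemma:linear} is defined relative to the GIT quotient $E^{\sharp}/\Gamma$.

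Next I would transport length-$n$ torsion sheaves through this dictionary. Composing the McKay equivalence with Proposition~\ref{prop:torsion-local}, made $\Gamma$-equivariant through the equivariance of $\Phi_{CFT}$ together with Lemma~\ref{lemma:equivariant} (in characteristic $p$ one replaces Theorem~\ref{thm:laumon-rothstein} by Theorem~\ref{thm:chen-zhu} throughout), identifies the stack $\Tt(\M)$ of length-$n$ torsion sheaves on $\M$ with the stack $\Loc([E^{\dual}/\Gamma],n)$ of rank $n$ local systems on the orbifold $[E^{\dual}/\Gamma]$, equivalently (Proposition~\ref{prop:orbi-parabolic}) of parabolic local systems on $\widehat{[E^{\dual}/\Gamma]}$. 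A $k$-point of $\M^{[n]}$ is a surjection $s \colon \Oo_{\M} \twoheadrightarrow \F$ with $\F$ of length $n$; passing to underlying quasi-coherent sheaves by the forgetful functor $\Psi$ of Proposition~\ref{prop:torsion-local}, the Serre-duality computation from the proof of Theorem~\ref{thm:toyexamples} applies verbatim and shows that $s$ corresponds to a vector $v$ in the fibre $F_{n_0-1}$, using $\tau_*(V^{\dual}\otimes\Oo/L_0^{-1}) \cong E^{\dual}_0/F^{\dual}_1$. Exactly as in Theorem~\ref{thm:Hilbert}, surjectivity of $s$ is equivalent to the triple $(V,\nabla,v)$ admitting no proper degree-zero sub-local-system containing $v$, and Lemma~\ref{lemma:isotropy-representation} recognises the resulting data as a stable parabolic local system for the enlarged diagram $\widetilde{Q}'$ with dimension vector $(n\lambda)'$ and the weights prescribed in Theorem~\ref{thm:Hilbert}.

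It remains to upgrade this bijection on points to an isomorphism of moduli spaces. Since all the equivalences above are relative and compatible with base change (Lemma~\ref{lemma:basechange}), the construction yields a morphism $\M^{[n]} \to \ssLoc(\widetilde{Q}',(n\lambda)')$, whose parabolic type may be read off at a single point by connectivity. The one place where the local-systems argument genuinely departs from Theorem~\ref{thm:Hilbert} is that there is no proper Hitchin map available to force surjectivity; this role is instead played by the observation recorded after Proposition~\ref{prop:torsion-local} that \emph{every} local system is admissible, because a bundle carrying a flat connection has degree zero. Consequently every stable parabolic local system of the prescribed type lies in the essential image of $\Phi_{CFT}$, so $\Phi_{CFT}^{-1}$ recovers a length-$n$ torsion sheaf $\F$, while the distinguished vector $v$ (forced by stability at the vertex $v'$) reconstitutes the surjection $s$; this produces an inverse morphism and hence the isomorphism. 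I expect the only real technical point to be the $D$-module bookkeeping of the middle paragraph: carrying the identification of the image of $\Oo_{E^{\sharp}}$ and the extraction of $v$ through $\Phi_{CFT}$, where fibres and duality must be interpreted only after the forgetful functor $\Psi$, and confirming that the $\Gamma$-equivariant and relative structures survive each stage.
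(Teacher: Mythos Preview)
Your proposal is correct and matches the paper's own approach, which consists of the single sentence ``There is an analogous statement for moduli spaces of local systems that is proved by the same means.'' You have correctly identified the one point where the transcription is not verbatim: properness of the Hitchin map is used in Theorem~\ref{thm:Hilbert} only to force every object in the target to be admissible, and for local systems this is automatic (as the paper notes just before Proposition~\ref{prop:torsion-local}), so the surjectivity step simplifies exactly as you describe.
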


\subsection{Derived equivalences}

We begin this subsection with the following observation:

\begin{lemma}\label{lemma:Hilbert}
Let $X$ and $Y$ be two quasi-projective smooth surfaces defined over an algebraically closed field of characteristic $p > n$. If we have an equivalence of derived categories $$D^b_{coh}(X) \cong D^b_{coh}(Y)$$ of Fourier-Mukai type then this induces an equivalence $$D^b_{coh}(X^{[n]}) \cong D^b_{coh}(Y^{[n]})$$ of Fourier-Mukai type.
\end{lemma}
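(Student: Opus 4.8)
The plan is to reduce the statement to the McKay-type equivalence of Corollary \ref{cor:Hilbert} together with the equivariant descent of Lemma \ref{lemma:equivariant_stable} (with Lemma \ref{lemma:equivariant} as a classical alternative). By Corollary \ref{cor:Hilbert} we have natural equivalences $D^b_{coh}(X^{[n]}) \cong D^b_{coh}([X^n/S_n])$ and $D^b_{coh}(Y^{[n]}) \cong D^b_{coh}([Y^n/S_n])$, both valid because the characteristic satisfies $p > n$. It therefore suffices to produce an $S_n$-equivariant equivalence $D^b_{coh}(X^n) \cong D^b_{coh}(Y^n)$, where $S_n$ acts by permuting the factors, and then to descend it to the quotient stacks.

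First I would lift the given equivalence $\Phi_K: D^b_{coh}(X) \cong D^b_{coh}(Y)$, with integral kernel $K \in D_{qcoh}(X \times Y)$, to an equivalence of the stable $\infty$-categorical enhancements $QC(X) \cong QC(Y)$. Applying this equivalence to each of the $n$ tensor factors, and invoking the Künneth equivalence $QC(X^n) \cong QC(X)^{\otimes n}$ of \cite[Thm. 1.2]{MR2669705} (as already used in Lemma \ref{lemma:products}), I obtain an equivalence $QC(X^n) \cong QC(Y^n)$ whose integral kernel is the external power $K^{\boxtimes n}$ on $X^n \times Y^n$.

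The key point is that this equivalence is $S_n$-equivariant. Under the Künneth equivalence the geometric factor-permutation action of $S_n$ on $X^n$ corresponds to the permutation action on the tensor power $QC(X)^{\otimes n}$, and the $n$-fold tensor power of a functor is tautologically equivariant for these permutation actions; equivalently, the kernel $K^{\boxtimes n}$ carries a canonical $S_n$-equivariant structure under the diagonal action, coming from the symmetry of the box product. Having established $S_n$-equivariance, Lemma \ref{lemma:equivariant_stable} descends the equivalence to $QC([X^n/S_n]) \cong QC([Y^n/S_n])$, and passing to homotopy categories and restricting to bounded coherent complexes gives $D^b_{coh}([X^n/S_n]) \cong D^b_{coh}([Y^n/S_n])$. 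Composing with the two equivalences from Corollary \ref{cor:Hilbert} yields the desired equivalence $D^b_{coh}(X^{[n]}) \cong D^b_{coh}(Y^{[n]})$, which is of Fourier-Mukai type since each factor in the composition is, again by \cite[Thm. 1.2]{MR2669705}.

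I expect the main obstacle to be the careful identification of the two $S_n$-actions---the geometric one on $X^n$ and the factor-permutation one on $QC(X)^{\otimes n}$---as a single coherent action, so that the tensor power of $\Phi_K$ is genuinely equivariant rather than merely equivariant up to a choice of compatible homotopies. This coherence is precisely what the $\infty$-categorical formulation of Lemma \ref{lemma:equivariant_stable} is designed to supply; in the classical language one would instead verify directly that $K^{\boxtimes n}$ descends to a kernel on $[X^n/S_n] \times [Y^n/S_n]$ and invoke Lemma \ref{lemma:equivariant}, using that $p > n$ guarantees $p \nmid n!$.
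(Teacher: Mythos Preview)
Your proposal is correct and follows essentially the same argument as the paper: take the $n$-fold external power of the kernel via Lemma~\ref{lemma:products}, observe that $K^{\boxtimes n}$ is $S_n$-equivariant, descend to the quotient stacks, and conclude with Corollary~\ref{cor:Hilbert}. The only cosmetic difference is that the paper invokes Lemma~\ref{lemma:equivariant} directly for the descent step, whereas you lead with the $\infty$-categorical Lemma~\ref{lemma:equivariant_stable} and mention Lemma~\ref{lemma:equivariant} as the classical alternative; either route works here.
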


\begin{proof} 
Lemma $\ref{lemma:products}$ allows us to take the $n$-th power of the equivalence $D^b_{coh}(X) \cong D^b_{coh}(Y)$, $$D^b_{coh}(X^n) \cong D^b_{coh}(Y^n).$$ On both spaces we have a natural action of the symmetric group $S_n$ by permuting the factors. The integral kernel is a sheaf naturally endowed with an $S_n$-equivariant structure; therefore we may apply Lemma \ref{lemma:equivariant} and conclude that $$D^b_{coh}([X^n/S_n]) \cong D^b_{coh}([Y^n/S_n]).$$ Together with Corollary \ref{cor:Hilbert} we obtain $$D^b_{coh}(X^{[n]}) \cong D^b_{coh}(Y^{[n]}).$$
\end{proof}

\begin{thm}\label{thm:autoduality}
We denote by $\M^{[n]}$ the moduli space of parabolic Higgs bundles associated to $[E/\Gamma]$ of Theorem \ref{thm:Hilbert}. By $\M^{\dual[n]}$ we denote the same moduli space for $\widehat{[E^{\dual}/\Gamma]}$. Both moduli spaces $\M^{[n]}$ and $\M^{\dual[n]}$ are $\A$-spaces, where $\A$ is the Hitchin base. Under the assumptions of Lemma \ref{lemma:Hilbert} there is a canonical equivalence of derived categories $$D^b_{coh}(\M^{[n]}) \cong D^b_{coh}(\M^{\dual[n]}),$$ relative to $\A$. The integral kernel of this derived equivalence is a Cohen-Macaulay sheaf $\Pb$ on $\M^{[n]} \times_{\A} \M^{\dual[n]}$, which restricts to the Poincar\'e bundle $\Pp$ over the locus of smooth spectral curves $\A^{\sm}$.
\end{thm}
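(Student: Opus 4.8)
The plan is to bootstrap the two-dimensional autoduality of Theorem \ref{thm:toyautoduality} to the Hilbert schemes using the package assembled in Lemma \ref{lemma:Hilbert}, and then to verify separately the two finer assertions---the relative structure over $\A$ and the Cohen--Macaulay property of the resulting kernel. Recall from Theorem \ref{thm:toyexamples} that $\M$ and $\M^{\dual}$ are smooth quasi-projective surfaces (the $\Gamma$-Hilbert schemes of $T^*E$ and $T^*E^{\dual}$), and from Theorem \ref{thm:toyautoduality} that they are Fourier--Mukai partners relative to the one-point Hitchin base $\A_1$, via a Cohen--Macaulay kernel $\Pb$ on $\M \times_{\A_1} \M^{\dual}$ restricting to the Poincar\'e bundle $\Pp$ over $\A_1^{\sm}$. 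First I would feed this equivalence into Lemma \ref{lemma:Hilbert} to obtain a Fourier--Mukai equivalence $D^b_{coh}(\M^{[n]}) \cong D^b_{coh}(\M^{\dual[n]})$. Unwinding the proof of that lemma, the kernel is produced by taking the $n$-th external power of $\Pb$ (Lemma \ref{lemma:products}), descending the naturally $S_n$-equivariant kernel to the quotient stacks (Lemma \ref{lemma:equivariant}), and transporting through the McKay correspondence (Corollary \ref{cor:Hilbert}) on each factor.

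The relative structure over $\A = \A_n$ I would track through these three operations. By Lemma \ref{lemma:products} applied with $\W = \Spec k$ and each $\Z_i = \A_1$, the power equivalence $D^b_{coh}(\M^n) \cong D^b_{coh}(\M^{\dual,n})$ is relative to $\A_1^n$; since the $S_n$-action permutes the base factors compatibly, the descended equivalence between the quotient stacks is relative to $\A_1^n/S_n = \Sym^n \A_1 = \A_n$. Finally, Lemma \ref{lemma:linear} applied with $X = \M^n \to \A_1^n$ and $\Gamma = S_n$, so that the GIT quotient of the base is $\A_n$, guarantees that the McKay equivalences $D^b_{coh}(\M^{[n]}) \cong D^b_{coh}([\M^n/S_n])$ and its dual are themselves relative to $\A_n$. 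Composing, the equivalence is relative to $\A$, matching the factorization $\M^{[n]} \to \M^{(n)} \to \A_1^{(n)} = \A_n$ of Theorem \ref{thm:Hilbert}.

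The identification over the smooth locus I would check on the dense open $U \subset \A^{\sm}$ parametrizing $n$ distinct smooth spectral curves, i.e. the image of the configuration space of distinct points of $\A_1^{\sm}$. Over $U$ the Hilbert--Chow morphism is an isomorphism and $\M^{[n]}|_U$ is the fibre product of $n$ copies of the abelian (elliptic) fibration $\M^{\sm} \to \A_1^{\sm}$; by the external-power construction together with the restriction statement of Theorem \ref{thm:toyautoduality}, the kernel restricts there to $\Pp^{\boxtimes n}$, which is precisely the Poincar\'e bundle of the product abelian variety. Hence the kernel restricts to the Poincar\'e bundle over all of $\A^{\sm}$.

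The hard part will be the Cohen--Macaulay property of the kernel $\Pb$ on $\M^{[n]} \times_{\A} \M^{\dual[n]}$. Here I would imitate the strategy of Theorem \ref{thm:toyautoduality}, viewing the kernel as a convolution: the Cohen--Macaulay sheaf on $(\M \times_{\A_1} \M^{\dual})^n$ obtained from $\Pb$, transported through the two McKay correspondences. The crucial geometric input is that the McKay kernel relating $\M^{[n]} = \Hilb^n(\M)$ and $[\M^n/S_n]$ is the pushforward of the structure sheaf of the isospectral Hilbert scheme $\M^{[n]} \times_{\Sym^n \M} \M^n$, which is Cohen--Macaulay (indeed Gorenstein) by Haiman's resolution of the $n!$-conjecture (\cite{MR1839919}); this is the same ingredient underlying Arinkin's proof over the integral locus. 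Concretely I would invoke Lemma 2.3 of \cite{Arinkin:2010uq} as in Theorem \ref{thm:toyautoduality}: since the Hilbert--Chow maps and the projection from the isospectral variety are finite, and the relevant fibre products over $\A$ are Gorenstein and Cohen--Macaulay respectively, the requisite Tor-finiteness and Cohen--Macaulayness of the pullbacks hold, and convolution against a line bundle followed by a finite pushforward preserves the Cohen--Macaulay property. The main technical burden is verifying these Gorenstein and Cohen--Macaulay conditions for the fibre products $\M^{[n]} \times_{\A} \M^{\dual[n]}$ and $\M^{[n]} \times_{\A} [\M^n/S_n]$, for which the flatness of the Hitchin map and Haiman's theorem are the essential inputs.
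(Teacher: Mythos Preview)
Your approach is essentially the same as the paper's: invoke Lemma \ref{lemma:Hilbert} for the equivalence, track relativity over $\A$ via Lemma \ref{lemma:products} and Lemma \ref{lemma:linear}, check the restriction over $\A^{\sm}$ as in Theorem \ref{thm:toyautoduality}, and establish Cohen--Macaulayness of the kernel by transporting a Cohen--Macaulay sheaf through the McKay correspondence using Lemma 2.3 of \cite{Arinkin:2010uq}.

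One minor simplification the paper makes compared with your sketch of the Cohen--Macaulay step: you propose to invoke Haiman's Gorenstein property of the isospectral Hilbert scheme and to verify Gorenstein/CM conditions on several fibre products over $\A$, but the paper avoids this. It observes directly that the universal $S_n$-cluster $\Z$ is finite over the smooth variety $\M^{[n]}$ and hence Cohen--Macaulay; this alone feeds into Arinkin's Lemma 2.3 to show that pulling back the Cohen--Macaulay kernel $\mathcal{R}$ on $[\M^n/S_n] \times_{\A} [\M^{\dual n}/S_n]$ (which is CM by Theorem \ref{thm:toyautoduality} together with Lemma \ref{lemma:products}) along $\iota: \Z \to [\M^n/S_n]$ preserves the CM property, and finite pushforward then finishes. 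So you do not actually need the full strength of Haiman's Gorenstein theorem here, only the elementary fact that the universal cluster is finite over the (smooth) Hilbert scheme.
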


\begin{proof}
This is a consequence of Theorem \ref{thm:Hilbert} and Lemma \ref{lemma:Hilbert}. Note that the construction in the proof of this lemma respects the morphism to the Hitchin base (due to Lemma \ref{lemma:linear}). The last two assertions are verified as in the proof of Theorem \ref{thm:toyautoduality}, with the single exception that this time the sheaf $\Qb$ cannot be thought of as a universal family of Higgs bundles. Therefore Cohen-Macaulayness has to be established by different means. The sheaf $\Qb$ is the transform of the structure sheaf $\Oo_{\Z}$ of the universal $S_n$-cluster on $\M^{[n]} \times_{\A} [M^n/S_n]$ along the equivalence $D^b_{qcoh}([M^n/S_n]) \cong D^b_{qcoh}([M^{\dual n}/S_n])$; we denote the integral kernel of the latter equivalence by $\mathcal{R}$, it is Cohen-Macaulay according to Theorem \ref{thm:toyautoduality} and Lemma \ref{lemma:products}. Let $\iota: \Z \to [M^n/S_n]$ be the canonical morphism; Lemma 2.3 of \cite{Arinkin:2010uq} implies that $L\iota^*\mathcal{R}$ is Cohen-Macaulay, since $\Z$ is finite over $\M^{[n]}$ and therefore Cohen-Macaulay itself. The natural morphism $\pi: \Z \to \M^[n]$ is finite, and so is every base change thereof. In particular we obtain that the transform of $\Oo_{\Z}$ is Cohen-Macaulay, as we wanted.
\end{proof}

Similarly one obtains an analogue for local systems.

\begin{thm}\label{thm:localduality}
We denote by $\M^{[n]}$ the moduli space of parabolic local systems associated to the weighted curve $\widehat{[E/\Gamma]}$ studied in Theorem \ref{thm:localHilbert}. Under the assumptions of Lemma \ref{lemma:Hilbert} there is a canonical equivalence of derived categories $$D_{qcoh}(\M^{[n]}) \cong D_{qcoh}([[E/\Gamma]^n/S_n],D_{[[E/\Gamma]^n/S_n]}).$$
\end{thm}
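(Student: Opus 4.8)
The plan is to reproduce the argument of Theorem \ref{thm:autoduality}, replacing the coherent Fourier-Mukai transform by the geometric class field theory equivalence throughout. The base case is the flat-connection analogue of Theorem \ref{thm:toyautoduality} proved above, namely the equivalence
$$\Phi_{GL}: D_{qcoh}(\M) \cong D_{qcoh}([E^{\dual}/\Gamma],D_{[E^{\dual}/\Gamma]})$$
for the two-dimensional moduli space $\M = \ssLoc(\widehat{[E/\Gamma]},Q,\lambda)$, where $\M^{[n]}$ is understood as the Hilbert scheme of $n$ points on the surface $\M$ exactly as in Theorem \ref{thm:localHilbert}. As noted in the proof of Proposition \ref{prop:torsion-local}, this equivalence lifts to the canonical enhancements as stable $\infty$-categories, and it is this enhancement that makes the product and descent steps below available.

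First I would form the $n$-fold external product of $\Phi_{GL}$. On the source this is governed by Lemma \ref{lemma:products}, while on the target one uses that external tensor products of $D$-modules on $([E^{\dual}/\Gamma])^n$ are again $D$-modules, so that the box power of the integral kernel of $\Phi_{GL}$ induces an equivalence
$$D_{qcoh}(\M^n) \cong D_{qcoh}(([E^{\dual}/\Gamma])^n,D_{([E^{\dual}/\Gamma])^n}).$$
Both halves are purely $\infty$-categorical once one works in the enhancements, along the lines of Theorem 1.2 in \cite{MR2669705}.

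Next I would descend along the $S_n$-action permuting the $n$ factors. The box-power kernel is manifestly $S_n$-equivariant, so Lemma \ref{lemma:equivariant_stable} applies; its proof uses only faithfully flat descent along the atlas $\M^n \to [\M^n/S_n]$ and therefore carries over verbatim with the $\infty$-category of $D$-modules in place of quasi-coherent sheaves, yielding
$$D_{qcoh}([\M^n/S_n]) \cong D_{qcoh}([([E^{\dual}/\Gamma])^n/S_n],D).$$
Haiman's identification (Corollary \ref{cor:Hilbert}) applied to the surface $\M$ gives $D_{qcoh}(\M^{[n]}) \cong D_{qcoh}([\M^n/S_n])$, and the Abel-Jacobi isomorphism $E \cong E^{\dual}$ rewrites the target in the stated form, completing the chain.

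The hard part is exactly the passage to $D$-modules in the product and descent steps, which in the Higgs case were packaged into the purely coherent Lemma \ref{lemma:Hilbert}. That lemma does not apply directly here because the target side carries a $D$-module rather than a quasi-coherent structure, so the clean resolution is to argue entirely within the stable $\infty$-categorical framework, where the K\"unneth statement and the $S_n$-equivariant descent are formal consequences of descent along an atlas and hold for $D$-modules exactly as for quasi-coherent sheaves. In contrast to Theorem \ref{thm:autoduality}, no Cohen-Macaulayness assertion about the integral kernel appears in the statement, so that additional analysis is not needed.
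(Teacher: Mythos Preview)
Your proposal is correct and matches the route the paper intends: the paper's own proof is the single sentence ``Similarly one obtains an analogue for local systems,'' pointing back to the argument of Theorem \ref{thm:autoduality} (i.e.\ take the $n$-fold box power of the base equivalence, descend along the $S_n$-action, then apply Haiman's identification from Corollary \ref{cor:Hilbert}). You have been more careful than the paper in flagging that Lemma \ref{lemma:Hilbert} is stated for coherent sheaves on both sides and therefore does not literally apply when the target carries a $D$-module structure; your replacement of Lemmas \ref{lemma:products} and \ref{lemma:equivariant} by their stable $\infty$-categorical analogues (Lemma \ref{lemma:equivariant_stable} and the K\"unneth formalism of \cite{MR2669705}) is exactly the right fix and is consistent with how the paper handles the $D$-module side elsewhere (cf.\ the proof of Proposition \ref{prop:torsion-local}).
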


\subsection{Moduli of parabolic local systems in positive characteristic}

In a previous paper (\cite{Groechenig:2012uq}) the author investigated a general relation between the moduli stacks and spaces of local systems and Higgs bundles on a curve $X$ defined over an algebraically closed field $k$ of positive characteristic. Extending a result from Bezrukavnikov--Braverman (\cite{Bezrukavnikov:fr}), it is shown there that the two moduli stacks are \'etale locally equivalent over the Hitchin base. The Hitchin map for local systems exists only in positive characteristic and is constructed using the $p$-curvature (\cite{MR1810690}). 

Let $E$ be an elliptic curve, the moduli space of rank one and degree zero Higgs bundles on $E$ is given by $T^*E^{\dual}$; the moduli space of local systems $\Loc$ is an extension of $E^{\dual}$ by the vector space $\A = H^0(E,\Omega^1_E)$. The Hitchin map for local systems is a map $$\HdR: \Loc \to \A^{(1)},$$ where $\A^{(1)}$ denotes the Frobenius twist of the variety $\A$. Let $\omega \in H^0(E,\Omega_E^1)$; formula 2.1.16 in \cite{MR565469} asserts that $$\tau := \HdR(\Oo_E,d+\omega): \A \to \A^{(1)}$$ is a sum of a $p$-linear and a linear map. Since $\A$ is a one-dimensional vector space, we may assume without loss of generality that this morphism is given by the Artin-Schreier-morphism $$\mathrm{AS}: \mathbb{A}^1 \to \mathbb{A}^1,$$ which sends $\lambda \to \lambda^p - \lambda$. In particular this morphism is \'etale. This is an explicit construction of an \'etale local section of the Hitchin morphism $\Loc \to \A^{(1)}$, i.e. \'etale locally around every $a \in \A^{(1)}$ we assign a solution to the equation $a = \HdR(\Oo,d + \omega)$. Any other local system $(\mathcal{E},\nabla)$ over $a$ can now be tensored with $(\Oo,d+\omega)^{\dual}$ to obtain $(\mathcal{E},\nabla')$; which is a flat connection of $p$-curvature zero. According to a theorem of Cartier such local systems are in bijection with line bundles on the Frobenius twist $E^{(1)}$ (\cite[Thm. 5.1.1]{Katz:fk}). We conclude that after base change along $\tau$ we obtain a natural isomorphism\footnote{The author thanks C. Pauly for explaining this example to him.} $$\Loc \times_{\A^{(1)}} \A \cong (E^{(1),\dual} \times \A^{(1)}) \times_{\A^{(1)}} \A.$$ 

One would expect that this \'etale local equivalence induces a similar comparison result for $\Gamma$-Hilbert schemes of the cotangent bundle of $E^{(1)}$ and $\Loc(E)$ respectively. Theorems \ref{thm:toyexamples} and \ref{thm:Hilbert} suggest that a similar \'etale local equivalence holds as well for certain moduli spaces of parabolic local systems. In the paper \cite{MR2577203} Nevins establishes this local equivalence on the locus of regular spectral curves (in the mirabolic case) in order to generalize Bezrukavnikov--Braverman's work on the Geometric Langlands correspondence in positive characteristic to the mirabolic case. 

Strictly speaking, the parabolic case is not covered by the authors paper \cite{Groechenig:2012uq}, as we assume there that $X$ is a curve. Nonetheless, the same methods used there to deduce the \'etale local equivalence of local systems and Higgs bundles, apply to orbicurves as well. 

\begin{prop}
Let $X$ be a smooth complete orbicurve defined over an algebraically closed field $k$ of characteristic $p$, satisfying assumption \ref{ass}. Then the moduli stack of rank $n$ local systems $\Loc(X,n)$ is \'etale locally equivalent to $\Higgs(X^{(1)},n)$ relative to the Hitchin base $\A^{(1)}$. The same assertion holds for (semi)stable local systems and Higgs bundles.
\end{prop}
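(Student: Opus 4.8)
The plan is to reduce the assertion to the non-orbifold case established in \cite{Groechenig:2012uq} and \cite{Bezrukavnikov:fr}, using that every ingredient is \'etale local on $X$. First I would recall the mechanism for ordinary curves. The sheaf of crystalline differential operators $D_X$ is an Azumaya algebra over its centre, which the $p$-curvature identifies with $\Oo_{T^*X^{(1)}}$; a rank $n$ local system is exactly a $D_X$-module whose underlying $\Oo_X$-module is locally free of rank $n$. The Hitchin morphism $\HdR$ records the characteristic polynomial of the $p$-curvature and lands in $\A^{(1)}$, while the Hitchin morphism for Higgs bundles records the characteristic polynomial of the Higgs field via the BNR correspondence (Proposition \ref{prop:BNR}). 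Over an \'etale neighbourhood of a point of $\A^{(1)}$ the Azumaya algebra $D_X$ splits, and the resulting Morita equivalence identifies $D_X$-modules whose $p$-curvature spectrum lies over that neighbourhood with quasi-coherent sheaves on the corresponding spectral data, that is, with Higgs bundles on $X^{(1)}$. In families this is an equivalence of stacks over $\A^{(1)}$, which is precisely the claim for curves.

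Next I would check that this argument survives for the orbicurve $X$. Since $X$ satisfies Assumption \ref{ass}, it is \'etale locally modelled either on a smooth affine curve (at an ordinary point), where the curve case applies verbatim, or on a tame quotient $[\mathbb{D}/\mu_r]$ with $\gcd(p,r) = 1$ (at an orbifold point). At an orbifold point I would pass to the $\mu_r$-equivariant description supplied by Proposition \ref{prop:orbi-parabolic}: a local system (resp. Higgs bundle) on $[\mathbb{D}/\mu_r]$ is a $\mu_r$-equivariant $D_{\mathbb{D}}$-module (resp. $\mu_r$-equivariant sheaf on $T^*\mathbb{D}^{(1)}$). Because $\gcd(p,r) = 1$, the group $\mu_r$ is linearly reductive, so invariants are exact, and the relative Frobenius $F: \mathbb{D} \to \mathbb{D}^{(1)}$ is equivariant once the action on the target is twisted by the automorphism $\xi \mapsto \xi^p$ of $\mu_r$; this is what makes $X^{(1)}$ well defined as an orbicurve. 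Hence $D_{\mathbb{D}}$ is a $\mu_r$-equivariant Azumaya algebra over $\Oo_{T^*\mathbb{D}^{(1)}}$, its splitting over an \'etale neighbourhood of $\A^{(1)}$ may be chosen $\mu_r$-equivariantly, and the Morita equivalence descends to $[\mathbb{D}/\mu_r]$, giving $\Loc([\mathbb{D}/\mu_r],n) \cong \Higgs([\mathbb{D}^{(1)}/\mu_r],n)$ relative to $\A^{(1)}$.

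For the stable and semistable variants I would argue that the equivalence is exact and compatible with the formation of subobjects and with the Hitchin morphism, so that it carries connection-invariant subbundles to Higgs-invariant subbundles and back; a comparison of the induced parabolic slopes, carried out exactly as in \cite{Groechenig:2012uq}, shows that the numerical conditions of Definition \ref{defi:stable} correspond on the two sides. Consequently the equivalence restricts to the open substacks of (semi)stable objects, still relatively over $\A^{(1)}$.

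The main obstacle will be the orbifold points, where one must verify that the Azumaya splitting of \cite{Bezrukavnikov:fr} can be arranged $\mu_r$-equivariantly and compatibly with the Frobenius twist of the stacky disc. The tameness hypothesis $\gcd(p,r) = 1$ is precisely what is needed: it guarantees exactness of $\mu_r$-invariants and makes $\xi \mapsto \xi^p$ an automorphism of $\mu_r$, so that both $X^{(1)}$ and the equivariant splitting are well defined, and the descent to the quotient stack is then formal. Everything else is an \'etale-local repetition of the curve argument of \cite{Groechenig:2012uq}.
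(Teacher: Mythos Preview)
Your strategy differs substantially from the paper's, and the central step is not justified. The equivalence in question is \emph{\'etale local on the Hitchin base} $\A^{(1)}$, not \'etale local on $X$; the moduli stacks $\Loc(X,n)$ and $\Higgs(X^{(1)},n)$ are global objects over $X$, and knowing the result on the formal discs $[\mathbb{D}/\mu_r]$ and on the schematic open $U$ does not assemble into the global statement. Concretely, in the argument of \cite{Groechenig:2012uq} the only step that fails to carry over verbatim is Theorem 3.4 there: one needs that the stack of splittings of the Azumaya algebra $D_X$ (pulled back along the spectral cover) is representable, which in turn rests on the vanishing of the Brauer obstruction, i.e.\ that every $\G_m$-gerbe on a smooth complete orbicurve over an algebraically closed field neutralizes. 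Your sentence ``its splitting over an \'etale neighbourhood of $\A^{(1)}$ may be chosen $\mu_r$-equivariantly'' is precisely this assertion, and you have not argued it; linear reductivity of $\mu_r$ does not by itself produce a global equivariant splitting, because the obstruction lives in $H^2_{et}(Y,\G_m)$ for the relevant spectral orbicurve $Y$ and is a global cohomology class.

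The paper instead isolates this one cohomological point and proves it directly: using the Weil divisor sequence $1 \to \Oo_X^{\times} \to \mathcal{K}_X^{\times} \to \mathrm{Div}_X \to 1$ on the orbicurve and the associated long exact sequence, it shows $H^1_{et}(X,\mathrm{Div}_X)=0$ (the orbifold points contribute summands $j_{i,*}\mathbb{Z}$ with $j_i\colon B\Gamma_i\hookrightarrow X$ finite, hence acyclic) and $H^2_{et}(X,\mathcal{K}_X^{\times})=0$ (a Tsen/Lang argument at the generic point, which is unaffected by the stacky points). This yields $H^2_{et}(X,\G_m)=0$, so the gerbe of splittings is neutral, and the rest of the proof from \cite{Groechenig:2012uq} goes through unchanged, including the (semi)stability comparison. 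Your equivariant-local picture is a reasonable heuristic for why one expects the result, but the actual content is this Brauer vanishing on the complete orbicurve, which your proposal does not supply.
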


\begin{proof}
The only part of the proof that is sensitive to orbifold structures is Theorem 3.4 in \cite{Groechenig:2012uq}. But as before it suffices to show that every $\G_m$-gerbe neutralizes on a smooth complete orbicurve $Y$ defined over an algebraically closed field. After this is established one can evoke the same argument as in \emph{loc. cit.} to deduce representability of the stack of splittings; the corresponding result for the Picard stack is proved in \cite[Thm. 5.1]{Aoki:fk}. As in the curve case one expects to be able to deduce this from Tsen's Theorem (\cite[Ex. III.2.22 (d)]{MR559531}), which states that for a smooth curve defined over an algebraically closed field $k$, every $\G_m$-gerbe neutralizes over the generic point $$H^2_{et}(\Spec K(X),\G_m).$$ It turns out that the argument of \emph{loc. cit.} applies to our situation, after some small modifications. Let $U \subset X$ be a maximal schematic open subset of $X$; we have $K(X) = K(U)$ due to the birational nature of $K(X)$. We denote by $\mathcal{K}^{\times}_X$ the sheaf of non-vanishing rational function on $X$; the sheaf-theoretic quotient $\mathcal{K}^{\times}_X/\Oo^{\times}_X$ is the sheaf of divisors $\mathrm{Div}_X$. By definition we have a short exact sequence $$1 \to \Oo_X^{\times} \to \mathcal{K}_X^{\times} \to \mathrm{Div}_X \to 1,$$ called \emph{Weil divisor sequence}. Taking global sections we obtain the following interesting bit of the associated long exact sequence 
$$H^1_{et}(X,\mathrm{Div}_X) \to H^2_{et}(X,\Oo_X^{\times}) \to H^2_{et}(X,\mathcal{K}^{\times}_X).$$
The complement $X - U$ is a union of finitely many orbifold points $p_1,\dots p_k$, with stabilizer group $\Gamma_i$. We then have $$\mathrm{Div}_X = \bigoplus_{x \in U(k)} i_{x,*}\mathbb{Z} \oplus \bigoplus_{i=1}^k j_{i,*}\mathbb{Z},$$ where $i_x$ denotes the closed immersion $\Spec k \to X$ associated to a point $x \in U$, and $j_i$ denotes the closed immersion $$B\Gamma_i \to X$$ associated to an orbifold point $p_i$. Since closed immersions are finite, we have $R^li_{x,*}= 0$ and $R^lj_{i,*}= 0$ for $l > 0$. In particular we obtain the vanishing result $H^1_{et}(X,\mathrm{Div}_X) = 0$.

Let $\eta: \Spec K(X) \to X$ denote the inclusion of the generic point. We have $$\mathcal{K}_X^{\times} = \eta_*\G_m$$ and therefore have to show that $R^l\eta_*\G_m = 0$ for $l > 0$. As in \emph{loc. cit.} this is checked stalkwise, by identifying $(R^l\eta_*\G_m)_{\bar{x}}$ for every $x$ different from the generic point with the Galois cohomology group $$H^l(\Spec K_x,\G_m) = 0,$$ where $K_x$ denotes the fraction field of the Henselization of $x$ and evoking a vanishing result in Galois cohomology due to Lang.
\end{proof}

\bibliographystyle{myamsalpha}
\bibliography{master}

E-mail: groechenig@maths.ox.ac.uk

\end{document}